\numberwithin{equation}{section}
\newtheorem{thm}{Theorem}[section]
\newtheorem{lemma}[thm]{Lemma}
\newtheorem{prop}[thm]{Proposition}
\newtheorem{cor}[thm]{Corollary}
{\theorembodyfont{\rmfamily}

\newtheorem{fact}[thm]{Fact}

\newtheorem{rmk}[thm]{Remark}
}
\newcommand{\qed}{\hfill \mbox{\raggedright \rule{.07in}{.1in}}}
\newenvironment{proof}{\vspace{1ex}\noindent{\bf
Proof}\hspace{0.5em}}{\hfill\qed\vspace{1ex}}
\newenvironment{pfof}[1]{\vspace{1ex}\noindent{\bf Proof of
#1.}\hspace{0.5em}}{\hfill\qed\vspace{1ex}}
\newcommand{\cP}{{\mathbb P}}
\newcommand{\R}{{\mathbb R}}
\newcommand{\Z}{{\mathbb Z}}
\newcommand{\N}{{\mathbb N}}
\newcommand{\E}{{\mathbb E}}
\newcommand{\cB}{{\mathcal B}}
\newcommand{\supI}{{\SMALL \sup_I}}
\newcommand{\Var}{\operatorname{Var}}
\newcommand{\BV}{{\rm BV}}
\newcommand{\eps}{{\epsilon}}
\newcommand{\SMALL}{\textstyle}
\newcommand{\vertiii}[1]{{\left\vert\kern-0.25ex\left\vert\kern-0.25ex\left\vert #1
    \right\vert\kern-0.25ex\right\vert\kern-0.25ex\right\vert}}
\title{Stable large deviations for deterministic dynamical systems}
\author{
Jonny  Imbierski
\and
Dalia Terhesiu
\thanks{The affiliation of both authors is
Mathematisch Instituut,
University of Leiden, Niels Bohrweg 1, 2333 CA Leiden, Netherlands.
Email addresses: j.f.imbierski@math.leidenuniv.nl and daliaterhesiu@gmail.com}
}
\begin{document}

 \maketitle
 
 \begin{abstract}
We obtain large deviations for a class of non-square-integrable dependent random variables in the domain of attraction of an $\alpha$-stable law, $\alpha\in(0,1)\cup(1,2]$. 
This class includes ergodic sums of observables in the domain of attraction of an $\alpha$-stable law driven by Gibbs-Markov maps.
\end{abstract}

\section{Introduction}

The purpose of this work is to obtain large deviations (LD) for a  class of dependent random variables satisfying $\alpha$-stable limit laws, $\alpha\in(0,1)\cup(1,2]$. In particular, we obtain stable large deviations for a class of
 deterministic dynamical systems (including Gibbs-Markov maps, see Section~\ref{sec-GM} for details).
 As far as we are aware this is the first stable LD result for dynamical systems. 

We start by  recalling the statement of LD for real-valued i.i.d.\ random variables $(X_i)_{i\ge 1}$ characterised by stable laws with index $\alpha\in (0,2]$; for the $\alpha=2$ case we still assume 
that the variance is infinite. 
Suppose that
\begin{equation}\label{eq-rvbt}
\cP(X_1>x)=p\ell(x) x^{-\alpha}(1+o(1)),\quad
\cP(X_1<-x)=q\ell(x) x^{-\alpha}(1+o(1)),
\end{equation} 
as $x \to \infty$, where $\ell$ is a slowly varying function and $p,q>0$ with $p+q=1$.
Then the sequence $(X_i)_{i\ge 1}$ satisfies the following convergence result in distribution:
\begin{equation}
\label{eq:stlaw}
 (S_n-b_n)/a_n\to_d Y_\alpha, 
\end{equation}
where $S_n=\sum_{i=1}^n X_i$, the limit random variable $Y_\alpha$ is an $\alpha$-stable law (with parameters $\alpha,p, q$),
\begin{align}\label{eq:hatell}
a_n^{\alpha}(1+o(1)
)=\begin{cases}
 n\ell(a_n),&\text{ if }\alpha\in (0,2)\\
 n\hat\ell(a_n),&\text{ if }\alpha=2,\text{ for } 
 \hat\ell(x) := 1+\int_1^{1+x}\frac{\ell(u)}{u}\, du,
\end{cases}
\end{align} 
and 
\begin{equation}
\label{eq:bn}
b_n=\begin{cases}
      0,&\text{ if }\alpha\in (0,1),\\
      n\E(X_1),&\text{ if }\alpha\in (1,2],\\
      n\E( X_1 1_{\{ |X_1|\le a_n\} }),&\text{ if }\alpha=1
     \end{cases}
\end{equation} is the centring sequence.
It is known that in the i.i.d.\ set-up,~\eqref{eq-rvbt} and~\eqref{eq:stlaw}
are equivalent.

We recall that when $\alpha = 2$,  up to redefining $X_1$ as $X_1- \E(X_1)$ and re-scaling $a_n$ by a constant, the limit random variable $Y_2$
is the standard Gaussian.
Let $\bar\Phi(x) = \frac{1}{x \sqrt{2\pi}} e^{-x^2/2} (1+o(1))$ be the distribution function of the standard Gaussian. 
Let $\sigma^2(x) = \E( (X_1-\E(X_1))^2 1_{\{|X_1-\E(X_1)| \leq x\}})$ be the truncated variance of $X_1$.
One easily verifies that $n\sigma^2(a_n) = a_n^2(1+o(1))=n\hat\ell(a_n)(1+o(1))$.

With these specified we recall the precise statement for large deviations
for i.i.d.\ sequences.

\begin{thm}\label{thm-LD}
Assume~\eqref{eq-rvbt} with $\alpha\in (0,2]$. 
Let $N:\N\to\R_{+}$ be so that $N(n)/a_n\to\infty$, as $n\to\infty$.
 Then the following hold as $n\to\infty$:
 
 If $\alpha\in (0,2)$ then
  \begin{equation}
\label{eq:LDthmiid}
 \cP(S_n-b_n>N)=n \cP(X_1>N)(1+o(1)),\quad \cP(S_n-b_n < -N)=n \cP(X_1 <  -N)(1+o(1)).
\end{equation}

If $\alpha=2$ then
\begin{equation}
\label{eq:LDthmiidalpha2}
 \cP(S_n-b_n > N)=
\left( \bar\Phi\left(\frac{N}{a_n}\right)+n\cP(X_1 > N) \right) (1+o(1))
\end{equation}
and a similar statement holds for $\cP(S_n-b_n\le -N)$.
\end{thm}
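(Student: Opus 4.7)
My plan is to establish Theorem~\ref{thm-LD} via the classical \emph{one-big-jump} heuristic: when $X_1$ has a regularly varying tail of index $-\alpha$, a rare deviation of $S_n$ of size $N\gg a_n$ is, to leading order, caused by a single summand $X_i$ attaining a value of order $N$, while the remaining $n-1$ summands behave typically. This immediately yields the asymptotic $n\cP(X_1>N)$ when $\alpha\in(0,2)$; for $\alpha=2$ the truncated sum retains Gaussian fluctuations of order $a_n$, contributing an additional $\bar\Phi(N/a_n)$ term. I would treat $\cP(S_n-b_n>N)$; the lower-tail bound follows by applying the same argument to $-X_i$, whose tails satisfy~\eqref{eq-rvbt} with $p,q$ swapped.

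For the upper bound when $\alpha\in(0,2)$, fix $\delta\in(0,1)$ and split according to $A_\delta:=\{\max_{i\le n} X_i>(1-\delta)N\}$. A union bound together with~\eqref{eq-rvbt} gives $\cP(A_\delta)\le (1-\delta)^{-\alpha}\,n\cP(X_1>N)(1+o(1))$. On $A_\delta^c$ I would apply a Fuk--Nagaev (or Prokhorov-type) inequality to the centred truncated sum $\sum_i(\widetilde X_i - \E\widetilde X_i)$ with $\widetilde X_i:=X_i 1_{\{X_i\le(1-\delta)N\}}$: its variance is $O(nN^{2-\alpha}\ell(N))$, and since $N\gg a_n$ the Gaussian exponent $\delta^2 N^\alpha/\ell(N)$ dominates any polynomial in $n$, so $\{\sum_i(\widetilde X_i-\E\widetilde X_i)>\delta N\}$ has probability $o(n\cP(X_1>N))$. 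The mismatch between $b_n$ and $n\E\widetilde X_1$ is absorbed into a fraction of $\delta N$: trivially for $\alpha\in(0,1)$, and by integration by parts on the tail (using~\eqref{eq-rvbt}) for $\alpha\in[1,2)$. For the matching lower bound, I would pick out the events
\[
B_{i,\delta}:=\{X_i>(1+\delta)N\}\cap\{|S_n-X_i-b_n|\le \delta N\},
\]
whose pairwise overlaps have total probability $O((n\cP(X_1>N))^2)=o(n\cP(X_1>N))$ since $n\cP(X_1>N)\to 0$. Independence and the stable limit~\eqref{eq:stlaw} (which keeps $(S_n-X_i-b_n)/a_n$ tight while $\delta N/a_n\to\infty$) give $\cP(B_{i,\delta})=\cP(X_1>(1+\delta)N)(1+o(1))$; inclusion--exclusion and letting $\delta\to 0$ close the argument.

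For $\alpha=2$ the same truncation is used, but the centred truncated sum now has finite variance $\sim n\hat\ell(a_n)=a_n^2$, and a classical moderate-deviation estimate yields $\cP(\sum_i(\widetilde X_i-\E\widetilde X_i)>N)=\bar\Phi(N/a_n)(1+o(1))$ uniformly for $N/a_n\to\infty$. Combining this with the one-big-jump contribution from $A_\delta$ produces~\eqref{eq:LDthmiidalpha2}. The main obstacle will be to make the two contributions cleanly additive: (i) to ensure that the moderate-deviation asymptotic for the truncated sum holds uniformly across the whole range $N/a_n\to\infty$, so that either $\bar\Phi(N/a_n)$ or $n\cP(X_1>N)$ may be the dominant term; and (ii) to show that the Gaussian-driven and jump-driven events are disjoint up to negligible error. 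The $\alpha=1$ centring $b_n=n\E(X_1 1_{\{|X_1|\le a_n\}})$ also requires a separate verification that $b_n-n\E\widetilde X_1=o(\delta N)$, which relies on $N\gg a_n$ together with control of the slowly varying factor $\ell$.
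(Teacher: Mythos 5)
The paper does not prove Theorem~\ref{thm-LD}: it cites Heyde, Nagaev, Rozovskii and Berger--Birkner--Yuan and then restates the theorem via characteristic functions in Proposition~\ref{prop:bas}, which is what the rest of the paper actually uses. Your one-big-jump plan is in the spirit of those references, so the overall strategy is reasonable, but as written it has a concrete gap at the truncation step.

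With truncation at $(1-\delta)N$, a Fuk--Nagaev or Prokhorov bound does \emph{not} give the Gaussian exponent you claim. For centred summands $Y_i=\widetilde X_i-\E\widetilde X_i$ bounded above by roughly $M=(1-\delta)N$, and a deviation $x$ of order $N$ (on $A_\delta^c$ the required deviation is about $(1-\delta)N$, not $\delta N$), the available exponent is of Bennett--Prokhorov type, roughly $\frac{x}{2M}\,\operatorname{arcsinh}\!\left(\frac{Mx}{2B^2}\right)$ with $B^2=n\E Y_1^2=O\!\left(nN^{2-\alpha}\ell(N)\right)$. Since $M$ and $x$ are both of order $N$ and $B^2=o(N^2)$, this exponent is only logarithmic, of order $\frac{1}{2(1-\delta)}\log\!\left(\frac{N^\alpha}{n\ell(N)}\right)$, so the bound you obtain is $\left(n\ell(N)N^{-\alpha}\right)^{1/(2(1-\delta))}$, which for small $\delta$ is \emph{larger} than $n\cP(X_1>N)$, not $o$ of it. The Gaussian exponent $\delta^2 N^\alpha/\ell(N)$ you cite (which also omits a factor $n$) is simply not attainable at this truncation level, because the hypothesis of the Gaussian regime (truncation level much smaller than the deviation) is violated. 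The standard repair is to truncate at $\eta N$ for small $\eta$, so that $x/M\sim 1/\eta$ is large; then a Rosenthal bound at a high power $p$ (or the polynomial term of Fuk--Nagaev) beats $n\cP(X_1>N)$, and one treats \emph{separately} the event that exactly one $X_i$ lies in the intermediate range $(\eta N,(1-\delta)N]$, which has probability of order $n\cP(X_1>N)$ but forces $S_n-b_n>N$ with conditional probability $o(1)$, since the remaining sum would have to exceed $\delta N\gg a_n$. Two further issues: your one-sided truncation leaves $\E\widetilde X_1^2=\infty$ when $q>0$ and $\alpha<2$, so the variance bound needs a two-sided truncation; and for $\alpha=2$ the truncated sum at level $(1-\delta)N$ has variance $\sim n\hat\ell(N)$, not $a_n^2\sim n\hat\ell(a_n)$, and these differ by a factor bounded away from $1$ when $N\gg a_n$ (take $\hat\ell=\log$ and $N=a_n^2$), so the moderate-deviation step does not produce $\bar\Phi(N/a_n)$ at this truncation level. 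You flag some of these as obstacles, but they are the heart of the proof, and the truncation scheme as written must be replaced.
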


Theorem~\ref{thm-LD} for $\alpha\in (0,1)\cup (1,2)$ has been known since the works
of~\cite{heyde, Nag1, Nag2}. The more difficult (general) case $\alpha=1$ has only very recently been dealt with in~\cite[Theorem 2.1]{Berger19}.
The statement for $\alpha = 2$ is contained in~\cite{Roz90}: see ~\cite[Theorem 2.1]{BBY},  contained within~\cite[Theorem 6 ]{Roz90}. We remark that~\cite[Condition 2.4 of Theorem 2.1]{BBY}
in Theorem~\ref{thm-LD} holds as soon as $\cP(X_1 > x) \sim c \cP(X_1 < -x)$ for some $c \geq 0$ and $a_n^2 = n\sigma^2(a_n)$: see ~\cite[Examples 2.2 and A.10]{BBY}.

\begin{rmk}\label{rmk:alpha2}
 To illustrate the role of $\bar\Phi$, we can take for example $\ell(x) \equiv 1$, so $\hat \ell(x) = \log x$, and
 $N \sim a_n \log N$. Then the term with $\bar\Phi(N/a_n)$ dominates the other term $n \cP(X_1 > N)$ by approximately a factor $\log N$.
\end{rmk}

We are mainly interested in obtaining a version
of Theorem~\ref{thm-LD} for a  class of deterministic dynamical systems. We postpone the case $\alpha=1$ to later work.

A version of Theorem~\ref{thm-LD} for a class of dependent random variables
satisfying certain clustering and limit conditions has been obtained in~\cite{mik}.
To our knowledge, this is the only previous work in  which a version `close' to the general form of Theorem~\ref{thm-LD}
for dependent random variables has been considered; by `close' we mean that~\eqref{eq:LDthmiid}
is shown to hold in a certain range
of $N/a_n\to\infty$ as opposed to the whole region of $n$ and $N$ so that $N/a_n\to\infty$.
It is not clear to us how to verify the limit conditions in~\cite[Theorem 3.1]{mik}
for the class of dynamical systems considered here and in this work we use a different method of proof.

The somewhat related problem of stable local large deviations (LLD)\footnote{A stable LLD result is concerned 
with bounds on $\cP(S_n-b_n\in (N-h, N+h))$, for $h>0$, 
as $N/a_n\to\infty$.}
for deterministic dynamical systems (including Gibbs-Markov maps) has been treated in~\cite{MTejp}. An optimal LLD has been very recently obtained in~\cite{MPT} for (much more challenging) dynamical systems (with very heavy dependencies) of physical interest known as  Lorentz gases.
We note that stable LLD does not imply stable LD.
Even for i.i.d.\ random variables, in the generality of~\eqref{eq-rvbt},
stable LLD does not imply\footnote{As shown in~\cite{CaravennaDoney, Berger19}, under~\eqref{eq-rvbt} with $\alpha\in (0,2)$ (see also~\cite{MTejp} for a different proof and for the case $\alpha=2$), for any $h>0$, there exists $C>0$ so that $\cP(S_n-b_n\in (x-h, x+h))\le C\frac{n}{a_n}\frac{\ell(|x|)}{1+|x|^\alpha}$, 
for all $n\ge 1$ and for all $x\in\R$.} Theorem~\ref{thm-LD}; though under 
ideal smoothness of the tail probabilities~\eqref{eq-rvbt}, this is in fact possible (see~\cite[Theorem 2.4]{Berger19}). For deterministic dynamical systems, the transition from the stable LLD obtained in~\cite{MTejp, MPT} to
a version of Theorem~\ref{thm-LD} seems impossible.
As we shall explain in the sequel, obtaining a version 
of Theorem~\ref{thm-LD} for dynamical systems is in many respects much more delicate than stable LLD. In this paper we restrict ourselves to  the class of dynamical systems treated in~\cite{MTejp}.

Throughout the rest of the paper, we let $N:\N\to\R_{+}$ be so that 
$N(n) \to \infty$ as $n \to \infty$ and $N(n)/a_n\to\infty$, as $n\to\infty$.

We state the main result and refer to Theorem~\ref{thm-LDGM} below for a more general version.
\begin{thm}\label{thm_informal}
 Let $(T,\Omega,\mu)$ be a probability-preserving mixing Gibbs-Markov map.
Let $v:\Omega\to\R$ be a measurable observable with 
$\int_\Omega v^2\,d\mu=\infty$ and suppose that $v$ satisfies~\eqref{eq-rvbt} (with $\mu$ instead of $\cP$).
Assume further that $v$ is locally Lipschitz and satisfies a `nice'
technical condition (more precisely,~\eqref{eq: extrass} stated in
Subsection~\ref{sec-GM}).

Write $v_n=\sum_{j=0}^{n-1} v\circ T^j$ for the ergodic sum of $v$.  Assume that 
$v_n$ satisfies~\eqref{eq:stlaw} (with $v_n$ instead of $S_n$) with $a_n$ and $b_n$ as in~\eqref{eq:hatell} and~\eqref{eq:bn}.

Let $\delta>0$ be arbitrarily small and set
\[
 D(x)=\begin{cases}
       (\log x)\,\ell(x)x^{-\alpha}, & \text{ if }\alpha\in (0,1),\\
       \,x^{-(\alpha-\delta)}, & \text{ if }\alpha\in (1,2].
      \end{cases}\]
      
 Take any $N = N(n)$ such that $N/a_n \to \infty$.
 Then, for $\alpha\in(0,1)\cup(1,2)$  the following holds as $n\to\infty$:
\[
\left|\mu(v_n>N)- n \mu(v>N)\right|=O
\left(D(N)\right)+o\left(n \ell(N) N^{-\alpha}\right)
\]
and a similar statement holds for $\mu(v_n < -N)$.

Also, for $\alpha=2$, as $n\to\infty$,
\[
\left|\mu(v_n>N)- n \mu(v>N)\right|=O(D(N))+ O\left(n \log N\, N^{-2}\right)+O\left(n \hat\ell(N) N^{-2}\right)\]
and a similar statement holds for $\mu(v_n\le -N)$.
~\end{thm}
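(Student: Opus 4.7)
The plan is to adapt the Nagaev-Heyde ``single big jump'' heuristic to the Gibbs-Markov setting, using the spectral gap of the transfer operator as a substitute for independence. Fix $\eps>0$ small and set $L=\eps N$. Split $v=\tilde v+v^*$ with $v^*=v\,\mathbf 1_{\{|v|>L\}}$ the jump part and $\tilde v=v\,\mathbf 1_{\{|v|\le L\}}$ the bounded truncation, so that $v_n=\tilde v_n+v^*_n$. Partition the phase space according to the number $k$ of indices $0\le j<n$ with $|v\circ T^j|>L$ (a ``big jump'' at $j$); the heuristic is that on $\{v_n>N\}$ exactly one such jump occurs, and it is essentially of size $N$.

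The argument then proceeds in three steps. \emph{(i) Multi-jump contribution.} Applying exponential decay of correlations for Gibbs-Markov maps to a Lipschitz regularisation of $\mathbf 1_{\{|v|>L\}}$ yields $\mu(\{\text{at least two big jumps}\})=O(n^2\mu(|v|>L)^2)$, which is absorbed into $o(n\ell(N)N^{-\alpha})$ since $N/a_n\to\infty$ forces $n\mu(|v|>L)\to 0$. \emph{(ii) Single-jump contribution.} Conditioning on the jump being at index $j$, the same mixing approximately factorises the joint distribution of $(v\circ T^j,\,v_n-v\circ T^j)$ as $\mu(v\in\cdot)\otimes\mu(\tilde v_n\in\cdot)$; integrating over the set where the two components sum to more than $N$, and summing over $j$, delivers the main term $n\mu(v>N)(1+o(1))$. \emph{(iii) Zero-jump contribution.} One must bound $\mu(\tilde v_n>N(1-o(1)))$ with the centring inherited from $b_n$, and show it contributes at most $O(D(N))$ (plus the Gaussian piece in the $\alpha=2$ case).

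The crux is step (iii): a Fuk-Nagaev type deviation inequality for the ergodic sum of the bounded observable $\tilde v$ on a Gibbs-Markov map, with sharp dependence on the truncation level $L=\eps N$. For $\alpha\in(1,2)$, $\Var(\tilde v)\sim L^{2-\alpha}\ell(L)$, and a polynomial bound $\mu(\tilde v_n-n\E(\tilde v)>\eps N)=O(N^{-(\alpha-\delta)})$ is needed; for $\alpha=2$, the CLT applied to $\tilde v_n-n\E(\tilde v)$ produces the Gaussian contribution absorbed in $\bar\Phi(N/a_n)+n\hat\ell(N)N^{-2}$; for $\alpha\in(0,1)$, where $b_n=0$, one must additionally absorb the drift $n\E(\tilde v)\sim nL^{1-\alpha}\ell(L)$, and the $\log$ factor in $D(x)$ arises from optimising $\eps$ against this drift. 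My approach would be to work with the twisted transfer operator $R_tf=R(e^{t\tilde v}f)$ for small real or imaginary $t$, using spectral-gap perturbation to control the Laplace/characteristic function of $\tilde v_n$ uniformly in $L$, followed by a Markov or Fourier-inversion step that tracks the $L$-dependence explicitly. A secondary obstacle is quantifying the mixing errors in step (ii) so they genuinely lie in $o(n\ell(N)N^{-\alpha})$ rather than merely $O(n\ell(N)N^{-\alpha})$; this is where the locally Lipschitz hypothesis on $v$ and the technical condition~\eqref{eq: extrass} are expected to enter.
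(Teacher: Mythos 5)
Your proposal follows the classical Nagaev--Heyde single-big-jump route (truncation at $L=\eps N$, decomposition by the number of big jumps, Fuk--Nagaev bound for the truncated part), which is genuinely different from the paper's strategy. The paper never truncates $v$. Instead it works entirely on the Fourier side: it rewrites the large-deviation statement as a bound on $\int_{-\eps}^{\eps}(e^{-itN}-e^{-it(N+g(n))})\psi_Y(t)\,(\E_\mu(e^{itv_n})-n\Psi(t))/(it)\,dt$ (Propositions~\ref{prop:bas} and \ref{prop:dyneq}), decomposes the Fourier transform $\E_\mu(e^{itv_n})=\int R(t)^n1\,d\mu$ into the i.i.d.\ characteristic function $\Psi(t)^n$ plus a remainder $U(t,n)$ whose size and modulus of continuity are controlled via the spectral decomposition $R(t)^n=\lambda(t)^nP(t)+Q(t)^n$ (Lemmas~\ref{lemma:dec}--\ref{lemma:pq0}, Proposition~\ref{prop:dec}), and then estimates the resulting integral by a translation-by-$\pi/N$ modulus-of-continuity argument. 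Your plan is closer in spirit to Mikosch--Wintenberger~\cite{mik}, which the authors explicitly say they could not adapt to this dynamical setting.

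The serious gap is in your steps (i) and (ii), where you invoke "decay of correlations" to (i) bound the multi-jump set by $n^2\mu(|v|>L)^2$ and (ii) approximately factorise the law of $(v\circ T^j,\,v_n-v\circ T^j)$. Decay of correlations for Gibbs--Markov maps pairs a test function in $\cB$ (Lipschitz or BV norm) against one in $L^1$, with error $\|\cdot\|_{\cB}\cdot\|\cdot\|_{L^1}\cdot\rho^{\mathrm{gap}}$. In (i), close pairs $|j-k|\lesssim \log N$ are not handled by correlation decay, and the Lipschitz-regularised indicator $1_{\{|v|>L\}}$ has $\cB$-norm growing in $L$, so the correlation error across distant pairs also does not obviously sit inside $o(n\ell(N)N^{-\alpha})$. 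In (ii), the event $\{v_n-v\circ T^j>N-y\}$ is not a nice $\cB$-bounded observable, so there is no way to feed it into the correlation estimate with a controlled error: the "approximately factorises" step is exactly the point where the argument breaks, because what you need is an error of order $o(\mu(v>N))$ per index $j$, uniformly over the target level $y\approx N$, and this cannot be extracted from a qualitative mixing statement. The paper's Fourier decomposition is precisely how one circumvents this: $\lambda(t)^n P(t)+Q(t)^n$ encodes mixing quantitatively at the level of the characteristic function, where the required smallness is visible (Proposition~\ref{prop:dec}), and is then transferred back to tail probabilities via Fourier inversion without ever estimating the conditional distribution of $v_n-v\circ T^j$.

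A secondary point: your step (iii) (Fuk--Nagaev for $\tilde v_n$ via a twisted transfer operator, uniformly in $L$) is technically plausible but is itself a substantial piece of work; you would be running a two-parameter perturbation theory (in $t$ and $L$) without obvious gain over the paper's one-parameter theory, since after carrying it out you would still face the factorisation error in step (ii). In short, the proposal correctly identifies the probabilistic intuition behind Theorem~\ref{thm_informal}, but the crucial quantitative step --- converting that intuition into an error of size $o(n\mu(v>N))$ in a deterministic, non-i.i.d.\ system --- is left at the heuristic level, and it is exactly the step the paper's Fourier machinery was designed to replace.
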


The corollary below gives the range of $N=N(n)$ for which
$\mu(v_n>N)\sim n \mu(v>N)$ holds when $\alpha\in (1,2)$.
However, in the case $\alpha=2$, notice that if $\ell\equiv 1$
then the asymptotic equality fails (see Remark~\ref{rmk:alpha2}).
For $\alpha\in (1,2)$, the probability that $v_n>N$ is equivalent to the probability 
that there is exactly one jump larger than $N$, that is that there exists exactly one $j=0,\ldots, n-1$ so that $v\circ T^j>N$.
\begin{cor} Assume the setup of Theorem~\ref{thm-LDGM}.
Then $\mu(v_n>N)\sim n \mu(v>N)$ holds in the following cases.
\begin{itemize}
 \item For $\alpha\in (0,1)$ and $N\in (a_n, e^{an})$, with $a$ small and independent of $n$.
 \item For $\alpha\in (1,2)$ and $N\in (a_n, n^{1/\delta})$
 for any $\delta>0$. 
\end{itemize}

\end{cor}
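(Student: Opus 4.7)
The plan is to apply Theorem~\ref{thm-LDGM} and read off the ranges of $N$ for which both error terms are strictly $o(n\mu(v>N))$. Since by~\eqref{eq-rvbt} we have $n\mu(v>N) = np\ell(N)N^{-\alpha}(1+o(1))$, the asymptotic $\mu(v_n>N)\sim n\mu(v>N)$ is equivalent to showing that the error in the theorem is $o(n\ell(N)N^{-\alpha})$. The second error term, $o(n\ell(N)N^{-\alpha})$, is automatically of the required order, so the task reduces to controlling $D(N)$ (and, in the $\alpha=2$ case, the additional logarithmic correction). Since $N=N(n)$ is already assumed to satisfy $N/a_n\to\infty$, the lower bound $N>a_n$ in each claimed range is automatic.

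For $\alpha\in(0,1)$, we have $D(N)=(\log N)\ell(N)N^{-\alpha}$, so the ratio of error to main term is $D(N)/(n\ell(N)N^{-\alpha})=(\log N)/n$. This tends to zero precisely when $\log N=o(n)$, and in particular holds uniformly for $N\le e^{an}$ with any fixed $a>0$ small (and independent of $n$). This yields the first bullet.

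For $\alpha\in(1,2)$, fix the target $\delta>0$ of the corollary and \emph{choose} the parameter in $D$ (which I shall denote $\delta'$ to distinguish it) strictly smaller than $\delta$. Then $D(N)=N^{-(\alpha-\delta')}$ and
\[
\frac{D(N)}{n\ell(N)N^{-\alpha}}=\frac{N^{\delta'}}{n\ell(N)}.
\]
Assuming $N\le n^{1/\delta}$, the numerator satisfies $N^{\delta'}\le n^{\delta'/\delta}$. By the Potter bounds for slowly varying functions, for every $\eta>0$ we have $1/\ell(N)\le N^{\eta}$ for all large $N$, hence $1/\ell(N)\le n^{\eta/\delta}$. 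Choosing $\eta>0$ small enough that $\delta'+\eta<\delta$ yields $D(N)/(n\ell(N)N^{-\alpha})\le n^{(\delta'+\eta)/\delta-1}\to 0$, giving the second bullet.

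The proof is essentially a bookkeeping exercise, so there is no genuine obstacle; the only mild subtlety is the slowly varying function $\ell$ in the $\alpha\in(1,2)$ case, handled by Potter bounds and by choosing the theorem's $\delta'$ strictly smaller than the corollary's $\delta$. (The corollary does not include $\alpha=2$ because, as noted in Remark~\ref{rmk:alpha2}, the $\bar\Phi(N/a_n)$ contribution can already dominate $n\mu(v>N)$, and this dominance is inherited from the i.i.d.\ statement rather than from the error terms of Theorem~\ref{thm-LDGM}.)
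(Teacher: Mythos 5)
Your derivation is correct and is the natural bookkeeping deduction from Theorem~\ref{thm-LDGM}; the paper itself supplies no proof of this corollary, so there is no alternative argument to compare against. The second bullet is handled exactly right: choosing the theorem's parameter $\delta' < \delta$ and absorbing the slowly varying factor $1/\ell(N)$ via Potter's bounds gives $D(N)/(n\ell(N)N^{-\alpha}) \ll n^{(\delta'+\eta)/\delta - 1} \to 0$ whenever $\delta'+\eta < \delta$, as you say.

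The only point that needs sharper wording is the first bullet. You correctly compute $D(N)/(n\ell(N)N^{-\alpha}) = (\log N)/n$ and note that this tends to zero \emph{precisely} when $\log N = o(n)$. But the range $N \le e^{an}$ with a \emph{fixed} $a>0$ does not imply $\log N = o(n)$: for instance $N(n) = e^{an/2}$ gives $(\log N)/n \equiv a/2$, a nonzero constant. For such $N$, Theorem~\ref{thm-LDGM} only yields
\[
\mu(v_n>N) = n\,\mu(v>N)\bigl(1+O(a)+o(1)\bigr),
\]
which becomes an asymptotic equivalence only in the limit $a\to 0$. Your phrase ``in particular holds uniformly for $N\le e^{an}$'' conflates these two statements. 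This imprecision is arguably inherited from the corollary's own phrasing, but your write-up should make explicit that either one reads the range as $N < e^{an}$ for \emph{every} small $a$ (equivalently $\log N = o(n)$), in which case the equivalence is exact, or one accepts that the equivalence holds only up to a multiplicative error controlled by the constant $a$.
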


While in this work we focus on observables taking values in $\R$, we believe that at the expense of cumbersome notation throughout, Theorem~\ref{thm_informal}
can be extended to $\R^d$-valued observables.

Although the proof of Theorem~\ref{thm_informal} below is quite technical, the main steps can be summarised as follows:
\begin{enumerate}
 \item Rephrase Theorem~\ref{thm-LD} for $\alpha\in (0,1)\cup (1,2)$
 (and~\eqref{eq:LDthmiidalpha2} for $\alpha=2$) for i.i.d.\ sequences in terms of characteristic functions: see Section~\ref{sec:strat}.
 \item Decompose the Fourier transform (analogue of characteristic function)
 for the ergodic sum $v_n=\sum_{j=0}^{n-1} v\circ T^j$ into the characteristic function of an i.i.d.\ sequence
 and `some good' quantities: see Subsections~\ref{subsec:strategy}
 and~\ref{subsec:decomp}.
 \item Estimate the various integrals appearing (from the `good' quantities) in the analytic expression of $\mu(v_n-b_n>x)$ via `modulus of continuity' type arguments (in the sense of~\cite[Chapter 1]{Katzn}): see Subsection~\ref{subsec:main abstr}.
 \item Put item 1. and item 3. together to conclude.
 
\end{enumerate}

\vspace{-2ex}
\paragraph{Notation}
We use ``big O'' and $\ll$ notation interchangeably, writing $b_n=O(c_n)$ or $b_n\ll c_n$
if there are constants $C>0$, $n_0\ge1$ such that
$b_n\le Cc_n$ for all $n\ge n_0$.
As usual, $b_n=o(c_n)$ means that $\lim_{n\to\infty}b_n/c_n=0$
and $b_n\sim c_n$ means that $\lim_{n\to\infty}b_n/c_n=1$.

\section{Rephrasing Theorem~\ref{thm-LD} in terms of characteristic functions}
\label{sec:strat}

We start with a general lemma, valid whenever $N(n) \to \infty$ as $n \to \infty$.

\begin{lemma}\label{lemY}
Let $(Z_n)_{n \in \N}$ be a sequence of random variables and $Y$ another random variable, defined on the same probability space but 
independent of all the $Z_n$.
Assume that there exist two sequences $N=N(n)$ and
$h_N=h_{N(n)}$ such that $\cP(|Y| > h_N) = o(\cP(Z_n > N))$
and $\cP(Z_n > N \pm h_N) = \cP(Z_n > N)(1+o(1))$
as $n \to \infty$.

Then for $\tilde Z_n = Z_n+Y$ we have
$$
\cP(\tilde Z_n > N) = \cP(Z_n > N)(1+o(1)) \quad \text{ as } \ n \to \infty.
$$
\end{lemma}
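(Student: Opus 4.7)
The plan is to split on the event $\{|Y| \le h_N\}$ and use independence to sandwich $\cP(\tilde Z_n > N)$ between small perturbations $\cP(Z_n > N \pm h_N)$ of the target. Nothing more delicate seems to be needed: the two hypotheses on $h_N$ are precisely tailored so that the ``bad'' event $\{|Y| > h_N\}$ is negligible compared to the target asymptotic, and the ``good'' event costs at most a $\pm h_N$ shift which only changes $\cP(Z_n > N)$ by a $(1+o(1))$ factor.

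For the upper bound, I write
\[
\cP(\tilde Z_n > N) \le \cP(\tilde Z_n > N,\, |Y| \le h_N) + \cP(|Y| > h_N).
\]
On $\{|Y| \le h_N\}$, $Y \le h_N$ forces $Z_n > N - h_N$ whenever $\tilde Z_n = Z_n + Y > N$, so the first term is bounded by $\cP(Z_n > N - h_N) = \cP(Z_n > N)(1+o(1))$ by hypothesis, while the second is $o(\cP(Z_n > N))$. For the lower bound, I restrict to the event $\{Y \ge -h_N\}$ (which is independent of $Z_n$) and observe that
\[
\{Z_n > N + h_N\} \cap \{Y \ge -h_N\} \subset \{\tilde Z_n > N\}.
\]
Independence then gives
\[
\cP(\tilde Z_n > N) \ge \cP(Z_n > N + h_N)\,\cP(Y \ge -h_N) \ge \cP(Z_n > N + h_N)(1 - \cP(|Y| > h_N)),
\]
and applying the two hypotheses again yields $\cP(\tilde Z_n > N)(1 + o(1))^{-1} \ge \cP(Z_n > N)(1+o(1))$.

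Combining the two bounds finishes the argument. There is no real obstacle here: the lemma is a clean soft statement whose content is entirely captured by the two quantitative conditions imposed on $h_N$, and independence is used only to factor $\cP(Z_n > N + h_N,\, Y \ge -h_N)$ in the lower bound (the upper bound does not actually need independence, just the union bound). The only minor point to be careful about is to apply the hypothesis $\cP(Z_n > N \pm h_N) = \cP(Z_n > N)(1+o(1))$ on both sides $N-h_N$ and $N+h_N$ with the same $o(1)$ rate absorbed into the final $(1+o(1))$.
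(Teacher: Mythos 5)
Your proof is correct and takes essentially the same route as the paper: split on $\{|Y|\le h_N\}$, push the threshold by $\pm h_N$, and absorb the bad event using $\cP(|Y|>h_N)=o(\cP(Z_n>N))$. The only genuine variation is in the lower bound: you use independence to factor $\cP(Z_n>N+h_N,\,Y\ge -h_N)=\cP(Z_n>N+h_N)\cP(Y\ge -h_N)$ and then bound $\cP(Y\ge -h_N)\ge 1-\cP(|Y|>h_N)$, whereas the paper obtains the same conclusion additively via $\cP(Z_n>N+h_N)\le \cP(|Y|>h_N)+\cP(\tilde Z_n>N)$, a step which (contrary to the paper's opening line) does not actually use independence at all; so your observation that independence is only needed for the multiplicative variant of the lower bound is accurate, and indeed neither proof truly requires it.
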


\begin{proof}
Since $Z_n$ and $Y$ are independent, we have (for $h = h_{N(n)}$)
\begin{eqnarray*}
 \cP(Z_n > N+h) &=&  \cP(Z_n > N+h,\, |Y| > h) +  \cP(Z_n > N+h ,\, |Y| \leq h) \\
 &\leq& \cP(|Y| > h) +  \cP(\tilde Z_n > N).
\end{eqnarray*}
Therefore
$$
\cP(\tilde Z_n > N) \geq \cP(Z_n > N+h) - \cP(|Y| > h)) = \cP(Z_n > N)(1+o(1)).
$$
For the other inequality, we have
\begin{eqnarray*}
 \cP(Z_n \leq N-h) &=&  \cP(Z_n \leq N-h ,\, |Y| > h) +  \cP(Z_n \leq N-h,\, |Y| \leq h) \\
 &\leq& \cP(|Y| > h) +  \cP(\tilde Z_n \leq N).
\end{eqnarray*}
Therefore,
\begin{eqnarray*}
\cP(\tilde Z_n > N) &=& 1-\cP(\tilde Z_n \leq N) \leq 1-\cP(Z_n \leq N-h) + \cP(|Y| > h) \\
&=& \cP(Z_n > N-h) + \cP(|Y| > h) = \cP(Z_n > N) (1+o(1)).
\end{eqnarray*}
This finishes the proof.
\end{proof}

For  $t\in\R$, let $\Psi(t)=\E(e^{itX_1})$ be the characteristic function of $X_1$.
In this section we translate Theorem~\ref{thm-LD} for  $\alpha\in(0,1)\cup(1,2]$ into a statement on the characteristic function $\Psi(t)$.
Such a translation is captured in equation~\eqref{eq:right} below, which is the starting point for the dynamic setting.
Since $\alpha\ne 1$, with no loss of generality we can assume that $b_n=0$. For $\alpha\in (0,1)$ this is given, while
for $\alpha\in (1,2]$, we simply replace $X_1$ by $X_1-\E(X_1)$.

\begin{prop}\label{prop:bas}
Assume the set-up of Theorem~\ref{thm-LD} with $\alpha\ne 1$.
Let $Y$ be an $L^2$ random variable, independent of $S_n$ (for each $n$), with
real-valued, even and $C^2$ characteristic function $\psi_Y$, supported in $[-\eps,\eps]$ for some 
\footnote{The existence of such a random variable $Y$ on the same probability space and with characteristic function $\psi_Y$ follows
from, for
instance,~\cite[Proposition 3.8]{Gou-asip},~\cite[Proof of Theorem 3]{KeTe21} and~\cite[Footnote 1]{MTejp}.} small $\eps>0$.

Take any $N = N(n)$ such that $a_n = o(N(n))$
and let $g(n)$ be so that $n N=o(g(n))$.
Then

\begin{itemize}
\item[(i)] If $\alpha\in (0,1)\cup (1,2)$, then the first asymptotic equivalence in~\eqref{eq:LDthmiid} holds if and only if
\begin{equation}
\label{eq:right}
 \left|\int_{-\eps}^\eps (e^{-itN}-e^{-it(N+g(n))})
 \psi_Y(t)\,\frac{\Psi(t)^n-n\Psi(t)}{it}\, dt\right|=o\left(n \cP(X_1>N)\right)
\end{equation}
as $n\to\infty$.

\item[(ii)] If  $\alpha=2$ then~\eqref{eq:LDthmiidalpha2} holds if and only if
\begin{equation}
\label{eq:right2222}
 \left|\int_{-\eps}^\eps (e^{-itN}-e^{-it(N+g(n))})
 \psi_Y(t)\,\frac{\Psi(t)^n-n\Psi(t)}{it}\, dt
 - \bar\Phi\left(\frac{N}{a_n}\right) \right|=
 o\left(n\cP(X_1 > N)+\bar\Phi\left(\frac{N}{a_n}\right) \right)
\end{equation}
as $n \to \infty$.
\end{itemize}
\end{prop}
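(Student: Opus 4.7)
The plan is to interpret the integral in~\eqref{eq:right} (and~\eqref{eq:right2222}) via Fourier inversion as a difference of tail probabilities for the smoothed random variables $\tilde S_n:=S_n+Y$ and $\tilde X_1:=X_1+Y$, and then compare with the LD statement of Theorem~\ref{thm-LD}. The starting observation is that $(e^{-itN}-e^{-it(N+g(n))})/(it)=\int_N^{N+g(n)} e^{-itu}\,du$ is the Fourier transform of $1_{(N,N+g(n)]}$. Since $\psi_Y$ is supported in $[-\eps,\eps]$ and $|\Psi|\le 1$, both $\psi_Y\Psi^n$ and $\psi_Y\Psi$ lie in $L^1(\R)$, and $\tilde S_n$, $\tilde X_1$ possess bounded densities; hence Fubini together with the Fourier inversion formula yield
\[
 \frac{1}{2\pi}\int_{-\eps}^{\eps}(e^{-itN}-e^{-it(N+g(n))})\psi_Y(t)\,\frac{\Psi(t)^n-n\Psi(t)}{it}\,dt = \cP(\tilde S_n\in(N,N+g(n)]) - n\cP(\tilde X_1\in(N,N+g(n)]).
\]
Splitting each interval-probability into two tails rewrites the right-hand side as $[\cP(\tilde S_n>N)-n\cP(\tilde X_1>N)] - [\cP(\tilde S_n>N+g(n))-n\cP(\tilde X_1>N+g(n))]$.

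The next step is to remove the perturbation $Y$ via Lemma~\ref{lemY}. Since $\psi_Y\in C^2$ we have $Y\in L^2$, and Chebyshev gives $\cP(|Y|>h)\ll h^{-2}$. Choosing $h_N\to\infty$ with $h_N=o(a_n)$, regular variation of the tail of $X_1$ yields $\cP(X_1>N\pm h_N)\sim\cP(X_1>N)$, while the universal LLD upper bound recalled in the footnote to Theorem~\ref{thm-LD}, namely $\cP(S_n\in(N-h_N,N+h_N])\ll h_N\cdot\frac{n\ell(N)}{a_n N^\alpha}$, together with $h_N/a_n\to 0$ gives $|\cP(S_n>N\pm h_N)-\cP(S_n>N)|=o(n\cP(X_1>N))$; in particular $\cP(S_n>N\pm h_N)=\cP(S_n>N)(1+o(1))$. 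Lemma~\ref{lemY} then yields $\cP(\tilde S_n>N)=\cP(S_n>N)(1+o(1))$ and the obvious analogue for $\tilde X_1$.

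For the tail at $N+g(n)$, the hypothesis $nN=o(g(n))$ gives $g(n)\gg N\gg a_n$, so the Fuk--Nagaev upper bound implies $\cP(\tilde S_n>N+g(n))\ll n\cP(X_1>g(n)/2)+\cP(|Y|>g(n)/2)=o(n\cP(X_1>N))$ by regular variation (for $\alpha=2$ the additional Gaussian piece $\bar\Phi((N+g(n))/a_n)$ is super-polynomially small in $n$), and a similar estimate handles $n\cP(\tilde X_1>N+g(n))$. Denoting the integral in~\eqref{eq:right} by $I_n$, the three steps combine to
\[
 \frac{I_n}{2\pi}=\cP(S_n>N)-n\cP(X_1>N)+o\bigl(n\cP(X_1>N)\bigr).
\]
For $\alpha\in(0,1)\cup(1,2)$, $|I_n|=o(n\cP(X_1>N))$ is therefore equivalent to $\cP(S_n>N)\sim n\cP(X_1>N)$, yielding (i). For $\alpha=2$, inserting the LD expansion $\cP(S_n>N)=\bar\Phi(N/a_n)+n\cP(X_1>N)+o(\bar\Phi(N/a_n)+n\cP(X_1>N))$ from~\eqref{eq:LDthmiidalpha2} and transferring $\bar\Phi(N/a_n)$ onto the left-hand side of the Fourier identity produces (ii). The main technical obstacle I anticipate is establishing the regularity $\cP(S_n>N\pm h_N)\sim\cP(S_n>N)$ without circularly invoking the LD one is trying to characterise; this is precisely what the universal LLD upper bound provides, and the constraint $h_N=o(a_n)$ is what keeps the ensuing error $o(n\cP(X_1>N))$.
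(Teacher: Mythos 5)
Your overall outline mirrors the paper's: interpret the Fourier integral as the interval probability difference for the smoothed variables $\tilde S_n = S_n+Y$ and $\tilde X_1 = X_1+Y$, dispose of the $N+g(n)$ tail, and strip away $Y$ using Lemma~\ref{lemY}. The Fourier-inversion identity and the $N+g(n)$ estimates are fine. However, your choice of $h_N$ creates a genuine gap. You impose $h_N = o(a_n)$ so that the local large deviation bound gives $\cP(S_n\in(N-h_N,N+h_N]) \ll h_N\,\frac{n\ell(N)}{a_n N^\alpha} = o(n\cP(X_1>N))$. But Lemma~\ref{lemY} also requires $\cP(|Y|>h_N)=o(\cP(Z_n>N))$, and with $Y\in L^2$ all you have is $\cP(|Y|>h_N)\ll h_N^{-2}$. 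With $h_N=o(a_n)$, that bound is at best of order $a_n^{-2}$, while $\cP(X_1>N)\asymp \ell(N)N^{-\alpha}$ can be arbitrarily smaller than $a_n^{-2}$ in the allowed range $N/a_n\to\infty$ (take e.g.\ $N=a_n^2$, $\alpha>1$, or more extremely $N=e^n$). So the first hypothesis of Lemma~\ref{lemY} fails, for both the $X_1$ and the $S_n$ applications. The two requirements, $h_N=o(a_n)$ for the LLD control and $h_N\gg N^{\alpha/2}/\sqrt{\ell(N)}$ for the $L^2$-tail of $Y$, are mutually incompatible over the full range of $N$.

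The paper resolves this by letting $h_N$ scale with $N$ rather than $a_n$: for $\alpha\in(0,1)\cup(1,2)$ it takes $h_N=N^{(\alpha+\delta)/2}=o(N)$ with $\delta\in(0,2-\alpha)$, giving $\cP(|Y|>h_N)=o(N^{-\alpha-\delta})=o(\cP(X_1>N))$; for $\alpha=2$ an even finer $h_N=N/\tilde m(N)$ is used, tied to the actual decay of $\cP(|Y|>\cdot)$. With such a large $h_N$ the LLD bound no longer closes the $S_n$ step, so the paper abandons it and instead breaks the proof into the two directions of the ``if and only if'': when proving \eqref{eq:LDthmiid}~$\Rightarrow$~\eqref{eq:right} it uses the assumed LD for $S_n$ directly to get $\cP(S_n>N\pm h_N)\sim\cP(S_n>N)$; when proving the converse it first derives the LD asymptotics for $\tilde S_n$ from \eqref{eq:right} via the inversion formulas, and only then applies Lemma~\ref{lemY} with $Z_n=\tilde S_n$ to transfer the asymptotics back to $S_n$. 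This avoids the circularity you were rightly worried about, without invoking the LLD. To repair your argument you would need to adopt this two-directional structure (and the $N$-dependent choice of $h_N$) in place of the LLD-based control.
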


\begin{rmk}
(a)  The second asymptotic equivalence  in~\eqref{eq:LDthmiid}  can be dealt with in the same way
using the fact that $\cP(S_n < -N)=\cP(-S_n > N)$.

(b) If the random variables $X_i$ are $\Z$-valued (so, $S_n$ is $\Z$-valued), then
one could exploit a simpler formula, bypassing the presence of $\psi_Y$ in the statement of Proposition~\ref {prop:bas}.
Indeed,
\[
 \cP(S_n>N)=\sum_{j>N}\cP(S_n=j)=\sum_{j>N} \frac{1}{2\pi} \int_{-\pi}^\pi e^{-itj} \Psi(t)^n\, dt.
\]
\end{rmk}

\begin{proof}
To start with, recall the inversion formulas
$$
 \cP(X_1 \in (N, N+g(n)])=\lim_{T\to\infty}\frac{1}{2\pi }\int_{-T}^T\frac{e^{-itN}-e^{-it(N+g(n))}}{it}\Psi(t)\, dt
$$
and 
$$
 \cP(S_n\in (N, N+g(n)])=\lim_{T\to\infty}\frac{1}{2\pi }\int_{-T}^T\frac{e^{-itN}-e^{-it(N+g(n))}}{it}\Psi(t)^n\, dt,
$$
for any choice of $g(n)$.
We want to allow that $N(n), g(n)\to\infty$ and obtain the desired formula for $\cP(S_n > N)$.
Recall that $n N= o(g(n))$ and compute that
\[\cP(X_1 > N+g(n))=p \ell(N+g(n)) \frac{1}{g(n)^{\alpha}} \frac{1+o(1)}{\left(1+\frac{N}{g(n)}\right)^{\alpha}} =o\left(\frac{n\ell(N+g(n))}{g(n)^{\alpha}}\right)= o(\cP(X_1 > N)).\]
Since $N=o\left(\frac{N+g(n)}{n}\right)$,
\[\cP(S_n > N+g(n)) \le \sum_{j=0}^{n-1} \cP\left(|X_j|>\frac{N+g(n)}{n}\right) = o(n\cP(X_1 > N)).\]
Thus,  given the choice of $g(n)$, we have $\cP(X_1 \in (N, N+g(n)]) = \cP(X_1 > N) (1-o(1))$
and 
\begin{eqnarray*}
\cP(S_n \in (N, N+g(n)]) &=& \cP(S_n > N) - \cP(S_n > N+g(n))\\
&=& \cP(S_n > N)  - o(n\cP(X_1 > N)).
\end{eqnarray*}

Therefore, as $n\to\infty$, 
\begin{equation}
\label{eq:mform}
 \cP(S_n>N)=\lim_{T\to\infty}\frac{1}{2\pi }\int_{-T}^T\frac{e^{-itN}-e^{-it(N+g(n))}}{it}\Psi(t)^n\, dt+ o(n\cP(X_1 > N))
\end{equation}
and
\begin{equation}
\label{eq:mformx1}
 \cP(X_1 >N)=\lim_{T\to\infty}\frac{1}{2\pi }\int_{-T}^T\frac{e^{-itN}-e^{-it(N+g(n))}}{it}\Psi(t)\, dt + o(\cP(X_1 > N)).
\end{equation}

Next, we argue that one can adjust the domain of integration in~\eqref{eq:mform}.
 Let $\tilde S_n=S_n+Y$,
where $Y$ is as in the statement, that is,  an $L^2$ random variable, independent of $S_n$, with
real-valued, even and $C^2$ Fourier transform $\psi_Y$, supported in $[-\eps,\eps]$ for some small $\eps>0$.

The analogues of~\eqref{eq:mformx1} and~\eqref{eq:mform} are:
\begin{equation}
\label{eq:mformx1Y}
 \cP(X_1+Y>N)=\frac{1+o(1)}{2\pi }\int_{-\eps}^\eps\frac{e^{-itN}-e^{-it(N+g(n))}}{it}\psi_Y(t)\Psi(t)\, dt
\end{equation}
and
\begin{equation}\label{eq:mformY}
 \cP(S_n+Y>N)=\frac{1}{2\pi }\int_{-\eps}^\eps\frac{e^{-itN}-e^{-it(N+g(n))}}{it}\psi_Y(t) \Psi(t)^n\, dt+o\left(n\cP\left(X_1+\frac{Y}{n} > N\right)\right).
\end{equation}

To compare the tails of $X_1$ and $X_1+Y$ , and of $S_n$
and $\tilde S_n = S_n+Y$, we will use Lemma~\ref{lemY}
several times, making different choices for $Z_n$
appearing in that lemma.

\textbf{The case $\alpha \in (0,1) \cup (1,2)$}. 
Take $\delta \in (0, 2-\alpha)$.
Choose $h_N = N^{\frac{\alpha+\delta}{2}} = o(N)$, so
$\cP(X_1 > N \pm h_N) = \cP(X_1 > N)(1+o(1))$.
Because $Y$ is $L^2$, the tail
$\cP(|Y| > h_N) = o(h_N^{-2}) =  o(N^{-\alpha-\delta}) =
o(\ell(N) N^{-\alpha}) = o(\cP(X_1 > N))$. 
Therefore Lemma~\ref{lemY} for $Z_n \equiv X_1$ gives
\begin{equation}\label{eqPPY}
\cP(X_1>N) = \cP(X_1+Y > N)(1+o(1))=\frac{1+o(1)}{2\pi }\int_{-\eps}^\eps\frac{e^{-itN}-e^{-it(N+g(n))}}{it}\psi_Y(t)\Psi(t)\, dt.
\end{equation}
Next we want to obtain a version with $S_n$ in~\eqref{eq:mformY}.

\textbf{From~\eqref{eq:LDthmiid} to~\eqref{eq:right}}.
If $S_n$ satisfies~\eqref{eq:LDthmiid},
i.e., $\cP(S_n>N) = n\cP(X_1 > N)(1+o(1))$, then
the previous computation and choice of $h_N$ give
also $\cP(S_n > N+h_N) = \cP(S_n > N)(1+o(1))$
and
$\cP(|Y| > h_N) = o(\cP(X_1 > N)) = o(\cP(S_n> N))$.
Therefore Lemma~\ref{lemY} for $Z_n = S_n$
and $\tilde S_n = S_n + Y$ give
$$
\cP(S_n>N) = \cP(\tilde S_n>N)(1+o(1)).
$$

\textbf{From~\eqref{eq:right} to~\eqref{eq:LDthmiid}}.
Note that
$\cP\left(\frac{|Y|}{n} > h_N\right)<\cP(|Y| > h_N)=o(\cP(X_1 > N))$.

If~\eqref{eq:right} holds, then using~\eqref{eq:mformY}, and also \eqref{eqPPY} together with
\eqref{eq:mformx1Y}, we obtain that
$\cP(\tilde S_n > N) = n \cP(X_1 > N)(1+o(1))$.
Again using Lemma~\ref{lemY} with $Z_n = \tilde S_n$, $S_n = \tilde S_n-Y$ and $h_N = N^{\frac{\alpha+\delta}{2}} = o(N)$, we can conclude that
\begin{equation}\label{eqSSP}
\cP(\tilde S_n > N) = \cP(S_n > N)(1+o(1)).
\end{equation}
Hence, after subtracting \eqref{eq:mformx1Y}
from~\eqref{eq:mformY}  and apply~\eqref{eqPPY} and~\eqref{eqSSP},
we obtain that
\begin{align*}
\left|\cP(S_n>N)-n \cP(X_1>N)\right|
 &= \frac{1+o(1)}{2\pi} \left|\int_{-\eps}^\eps (e^{-itN}-e^{-it(N+ g(n))})
 \psi_Y(t)\,\frac{\Psi(t)^n-n\Psi(t)}{it} \, dt\right|\\
 &+ o(n\cP(X_1 > N))= o\left(n \cP(X_1>N)\right).
\end{align*}
as  $n, N \to \infty$ so that $a_n= o(N)$.

Next we made the adjustments for \textbf{the case $\alpha = 2$}.
Let $m(N)$ be so that $\cP(|Y| > N)=m(N) N^{-2}$.
So, $m(N) = o(\ell(N))$.
The choice of $m(N)$ ensures that there is a function $\tilde m(N) \to \infty$ such that also
$\tilde m(N)^2 m(N/\tilde m(N)) = o(\ell(N))$.
Now take $h_N = N/\tilde m(N) = o(N)$, so $\cP(X_1 > N+h_N) = \cP(X_1 > N) (1+o(1))$.
Moreover
$$
\cP\left(|Y|/n > h_N\right)<\cP(|Y| > h_N) \ll N^{-2} \tilde m(N)^2 m(N / \tilde m(N))
= o(N^{-2} \ell(N)) = o(\cP(X_1 > N)).
$$
By the same argument used in obtaining~\eqref{eqPPY}, $\cP(X_1+Y > N) = \cP(X_1 > N) (1+o(1))$
and
\begin{equation}\label{eqXa2}
 \cP(X_1 > N) = \frac{1+o(1)}{2\pi} \int_{-\eps}^\eps
 \frac{e^{-itN}-e^{-it(N+ g(n))} }{it} \,
 \psi_Y(t)\, \Psi(t) \, dt.
\end{equation}
The same choice of $h_N$ gives
$\cP(S_n > N+h_N) = \cP(S_n > N) (1+o(1))$
and $\cP(|Y| > h_N) = o(\cP(X_1 > N) = o(\cP(S_n > N)$.
Therefore Lemma~\ref{lemY} gives $\cP(\tilde S_n > N) = \cP(S_n>N) (1+o(1))$.
Combining these estimates with~\eqref{eq:mformY} gives
\begin{equation}\label{eqSa2}
 \cP(S_n  > N) = \frac{1+o(1)}{2\pi} \int_{-\eps}^\eps
 \frac{e^{-itN}-e^{-it(N+ g(n))} }{it} \,
 \psi_Y(t)\, \Psi(t)^n \, dt+o(n\cP(X_1 > N)).
\end{equation}
Now we insert the tail~\eqref{eq:LDthmiidalpha2}, subtract $\bar\Phi(N/a_n)$ and~\eqref{eqXa2} $n$ times. This gives
$$
\left|\int_{-\eps}^\eps (e^{-itN}-e^{-it(N+g(n))})
 \psi_Y(t)\,\frac{\Psi(t)^n-n\Psi(t)}{it}\, dt
 - \bar\Phi\left(\frac{N}{a_n}\right) \right|=
 o\left(n\cP(X_1 > N)+\bar\Phi\left(\frac{N}{a_n}\right) \right),
$$
as required.
For the converse, \eqref{eq:LDthmiidalpha2}
follows directly from the above formula together with
\eqref{eqXa2} and~\eqref{eqSa2}.
\end{proof}

For use below in the set-up of dependent random variables (arising in the context of dynamical systems)
we record the following consequence of the proof of Proposition~\ref{prop:bas}.

\begin{prop}\label{prop:dyneq} Let $(v^j)_{j\ge 0}$ be a sequence of random variables, not necessarily independent, but identically distributed, on some probability
space $(\Omega,\mu)$. Suppose that $v^j$ satisfies~\eqref{eq-rvbt} (with $v^j$ instead of $X_j$ and $\mu$ instead of $\cP$) and write $v_n=\sum_{j=0}^{n-1} v^j$. Assume that 
$v_n$ satisfies~\eqref{eq:stlaw} with $v_n$ instead of $S_n$ and $a_n$ and $b_n$ as in~\eqref{eq:hatell} and~\eqref{eq:bn}.

Let $Y$ be an $L^2$ random variable, independent of $v_n$ (for each $n$), with
real-valued, even and $C^2$ Fourier transform $\psi_Y$, supported in $[-\eps,\eps]$ for some 
\footnote{In the dynamical setting $\eps$ will be fixed at Fact~\ref{fact:chfact1}.} small $\eps>0$.

Take any $N = N(n)$ such that $a_n = o(N(n))$
and let $g(n) = N(n)^{1+\eps_0}$ with $\eps_0>0$.

Moreover, suppose that there exists $W(N,n)$ so that $n \cP(X_1>N)=O\left(W(N,n)\right)$ and so that

\begin{itemize}
\item[(i)] If $\alpha\in (0,1)\cup (1,2)$, as $n\to\infty$,
\begin{equation}
\label{eq:right10}
 \left|\int_{-\eps}^\eps (e^{-itN}-e^{-it(N+g(n))})
 \psi_Y(t)\,\frac{\E_\mu(e^{itv_n})-n\E_\mu(e^{itv^0})}{it}\, dt\right|=O\left(W(N,n)\right).
\end{equation}

\item[(ii)] If  $\alpha=2$, as $n \to \infty$,
\begin{align}
\label{eq:right222210}
 \left|\int_{-\eps}^\eps (e^{-itN}-e^{-it(N+g(n))})
 \psi_Y(t)\,\frac{\E_\mu(e^{itv_n})-n\E_\mu(e^{itv^0})}{it}\, dt
 - \bar\Phi\left(\frac{N}{a_n}\right) \right|&=O\left(W(N,n)\right)\\ \nonumber &+o\left(\bar\Phi\left(\frac{N}{a_n}\right) \right).
\end{align}

\end{itemize}

Then
\begin{itemize}
\item[(i)] If $\alpha\in (0,1)\cup (1,2)$, as $n\to\infty$,
\begin{equation}
\label{eq:right1010}
 \left|\mu(v_n>N)- n \mu(v^0>N)\right|=O\left(W(N,n)\right).
\end{equation}

\item[(ii)] If  $\alpha=2$, as $n \to \infty$,
\begin{equation}
\label{eq:right22221010}
\left|\mu(v_n>N)- n \mu(v^0>N)\right|=O\left(W(N,n)\right) +o\left(\bar\Phi\left(\frac{N}{a_n}\right) \right).
\end{equation}

\end{itemize}

\end{prop}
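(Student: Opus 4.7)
The plan is to imitate the proof of Proposition~\ref{prop:bas} essentially line by line, replacing the multiplicative $o(\cdot)$ errors (justified there by the tail asymptotic $\cP(S_n>N)=n\cP(X_1>N)(1+o(1))$) with the additive $O(W(N,n))$ errors that follow from the hypothesis. The proof of Proposition~\ref{prop:bas} used independence of the summands only in the union bound $\cP(S_n>N+g(n))\le\sum_j\cP(|X_j|>(N+g(n))/n)$, which remains trivially valid for dependent $(v^j)$. Lemma~\ref{lemY} itself requires only that $Y$ be independent of the mollified variable, which is part of the hypothesis.

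First I would write the Fourier inversion identities
\[
\mu(v_n\in(N,N+g(n)])=\lim_{T\to\infty}\frac{1}{2\pi}\int_{-T}^T\frac{e^{-itN}-e^{-it(N+g(n))}}{it}\E_\mu(e^{itv_n})\,dt,
\]
and likewise for $v^0$. With $g(n)=N^{1+\eps_0}$ and the standing assumption $n\mu(v^0>N)=O(W(N,n))$, the union bound
\[
\mu(v_n>N+g(n))\le n\mu\bigl(|v^0|>(N+g(n))/n\bigr)=o(n\mu(v^0>N))=O(W(N,n)),
\]
together with $\mu(v^0>N+g(n))=o(\mu(v^0>N))$, allow one to replace $\mu(v_n>N)$ by $\mu(v_n\in(N,N+g(n)])$ up to an $O(W(N,n))$ error. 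Introducing the mollifier $Y$ truncates the Fourier integrals to $[-\eps,\eps]$ at the price of replacing $v_n$ by $v_n+Y$ and $v^0$ by $v^0+Y$.

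Next I would apply Lemma~\ref{lemY} to $Z_n=v^0$ with $h_N$ chosen exactly as in the proof of Proposition~\ref{prop:bas} (namely $h_N=N^{(\alpha+\delta)/2}$ for $\alpha\in(0,1)\cup(1,2)$ and $h_N=N/\tilde m(N)$ for $\alpha=2$). This yields $\mu(v^0+Y>N)=\mu(v^0>N)(1+o(1))$, hence the analogue of~\eqref{eqPPY}:
\[
\mu(v^0>N)=\frac{1+o(1)}{2\pi}\int_{-\eps}^\eps\frac{e^{-itN}-e^{-it(N+g(n))}}{it}\,\psi_Y(t)\,\E_\mu(e^{itv^0})\,dt.
\]
Combining this with the analogous (approximate) identity for $\mu(v_n+Y>N)$, subtracting $n$ times the $v^0$ identity, and invoking the hypothesis~\eqref{eq:right10} (resp.\ \eqref{eq:right222210}) yields
\[
\bigl|\mu(v_n+Y>N)-n\mu(v^0>N)\bigr|=O(W(N,n))\quad\bigl(\text{plus }o(\bar\Phi(N/a_n))\text{ in case (ii)}\bigr).
\]

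Finally, to convert $v_n+Y$ back to $v_n$ I would bypass the conclusion of Lemma~\ref{lemY} (which is stated as a multiplicative $(1+o(1))$, too weak here) and apply the two-sided sandwich used in its proof:
\[
\mu(v_n+Y>N+h_N)-\mu(|Y|>h_N)\le\mu(v_n>N)\le\mu(v_n+Y>N-h_N)+\mu(|Y|>h_N).
\]
Since $\mu(|Y|>h_N)=o(\mu(v^0>N))$ and $n\mu(v^0>N)=O(W(N,n))$ by hypothesis, both boundary terms are $O(W(N,n))$. Applying the previous step at $N\pm h_N$ and using $\mu(v^0>N\pm h_N)=\mu(v^0>N)(1+o(1))$ then delivers~\eqref{eq:right1010} (resp.\ \eqref{eq:right22221010}). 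The main obstacle is precisely this last step: the smoothness $\mu(v_n+Y>N\pm h_N)=\mu(v_n+Y>N)(1+o(1))$ needed to invoke Lemma~\ref{lemY} as a black box is not immediate when the left-hand side is only controlled up to an additive $O(W)$; the resolution is to unfold the lemma into its sandwich inequalities and absorb the slack into $O(W(N,n))$ via the structural input $n\mu(v^0>N)=O(W(N,n))$.
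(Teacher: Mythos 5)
Your proposal follows the paper's route: Fourier inversion over $(N,N+g(n)]$, the union bound $\mu(v_n>N+g(n))\le n\mu(|v^0|>(N+g(n))/n)$ (which, as you note, needs no independence), the mollifier $Y$ to truncate the integral to $[-\eps,\eps]$, the analogues of \eqref{eqPPY} and \eqref{eq:mformY}, and the subtraction that brings in hypothesis \eqref{eq:right10}/\eqref{eq:right222210}. So the approach is essentially the paper's. Where you deviate is in the final reduction from $\mu(v_n+Y>N)$ to $\mu(v_n>N)$, and your instinct there is justified: the paper simply invokes Lemma~\ref{lemY} ``as in obtaining~\eqref{eqSSP}'', but in the proof of \eqref{eqSSP} the multiplicative conclusion $\cP(\tilde S_n>N)=n\cP(X_1>N)(1+o(1))$ was available first, and that is what made the hypotheses $\cP(\tilde S_n>N\pm h_N)=\cP(\tilde S_n>N)(1+o(1))$ and $\cP(|Y|>h_N)=o(\cP(\tilde S_n>N))$ of Lemma~\ref{lemY} checkable. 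Here one only has the additive bound $|\mu(v_n+Y>N)-n\mu(v^0>N)|=O(W(N,n))$, which does not pin down the size or regularity of $\mu(v_n+Y>\cdot)$ near $N$. Unfolding Lemma~\ref{lemY} into its two sandwich inequalities, applying the preceding estimate at $N\pm h_N$ (permissible since $h_N=o(N)$, so $a_n=o(N\pm h_N)$), and absorbing $\mu(|Y|>h_N)=o(\mu(v^0>N))$ and $n|\mu(v^0>N\pm h_N)-\mu(v^0>N)|=o(n\mu(v^0>N))$ into $O(W(N,n))$ is the correct repair and gives exactly the additive conclusion \eqref{eq:right1010}/\eqref{eq:right22221010}.

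One point you should make explicit: applying the hypothesis at $N\pm h_N$ yields an error $O(W(N\pm h_N,n))$, and you need $W(N\pm h_N,n)=O(W(N,n))$. This is a mild regularity of $W$ in its first argument which the proposition does not state but which holds for the $W$ used in Theorem~\ref{thm-LDGM}; the paper's own appeal to Lemma~\ref{lemY} would implicitly need something of the same kind. Similarly, in case (ii) one should check that $\bar\Phi\bigl((N-h_N)/a_n\bigr)=O\bigl(\bar\Phi(N/a_n)\bigr)$ for the chosen $h_N=N/\tilde m(N)$, a point the paper also treats tersely in the passage leading to \eqref{eqSa2}.
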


\begin{proof}
First, note that the only place where we required independence inside the proof of Proposition~\ref{prop:bas}
was where we translated the content of Theorem~\ref{thm-LD} with $\alpha\ne 1$ in terms of $ \left|\int_{-\eps}^\eps (e^{-itN}-e^{-it(N+g(n))})
 \psi_Y(t)\,\frac{\Psi(t)^n-n\Psi(t)}{it}\, dt\right|$. As explained below, the same proof (for the converse part) with $\Psi(t)^n$ replaced by $\E_\mu(e^{itv_n})$  yields the conclusion of the current proposition.
 
 First, by the same argument as the one used in obtaining~\eqref{eq:mform}
 and~\eqref{eq:mformx1}, we have 
 \[
 \mu(v_n>N)=\lim_{T\to\infty}\frac{1}{2\pi }\int_{-T}^T\frac{e^{-itN}-e^{-it(N+g(n))}}{it}\E_\mu(e^{itv_n})\, dt+ o\left(n\mu(v^0 >N)\right)
\]
and 
\(
 \mu(v^0 >N)=\lim_{T\to\infty}\frac{1}{2\pi }\int_{-T}^T\frac{e^{-itN}-e^{-it(N+g(n))}}{it}\E_\mu(e^{itv^0})\, dt.
\)
Also, analogously to~\eqref{eq:mformx1Y} and~\eqref{eq:mformY},
\(
 \mu(v^0+Y>N)=\frac{1}{2\pi }\int_{-\eps}^\eps\frac{e^{-itN}-e^{-it(N+g(n))}}{it}\psi_Y(t)\E_\mu(e^{itv^0})\, dt,
\)

\[
 \mu(v_n+Y>N)=\frac{1}{2\pi }\int_{-\eps}^\eps\frac{e^{-itN}-e^{-it(N+g(n))}}{it}\psi_Y(t) E_\mu(e^{itv_n})\, dt+ o\left(n\mu(v^0 >N)\right)
\]
and
$$
\left|\mu(v_n+Y>N)- n \mu(v^0>N)\right|- o\left(n\mu(v^0 >N)\right)=
\begin{cases}
 O\left(W(N,n)\right) & \text{ if }\alpha\in (0,1)\cup (1,2),\\
 O\left(W(N,n)\right) +o\left(\bar\Phi\left(\frac{N}{a_n}\right) \right)
    & \text{ if }\alpha = 2.
\end{cases}
$$

It remains to argue that $ \mu(v_n+Y>N)= \mu(v_n>N)(1+o(1))$.
 
 If  $\alpha\in (0,1)\cup (1,2)$, we proceed as in obtaining~\eqref{eqSSP},
 by applying Lemma~\ref{lemY} with $Z_n = \tilde S_n$, $S_n = \tilde S_n-Y$ and $h_N = N^{\frac{\alpha+\delta}{2}} = o(N)$.
 If $\alpha=2$,  we proceed as in the argument used in obtaining~\eqref{eqSa2} with the same choice of $h_N$ as there (and $S_n$ replaced by $v_n$).
\end{proof}

\section{Abstract set-up for dynamical systems}

Let $T:\Omega\to \Omega$ be a measure-preserving map
on a probability space $(\Omega,\mu)$.
Let $v:\Omega\to\R$ be a measurable observable with $\int_\Omega v^2\,d\mu=\infty$.
We fix $\alpha\in(0,1)\cup(1,2]$ throughout and
assume
\begin{itemize}
\item[(H1)] 
$\mu(v>x)=p\ell(x) x^{-\alpha}(1+o(1)),\quad \mu(v \le -x)=q\ell(x) x^{-\alpha}(1+o(1)$
\end{itemize}
as $x\to\infty$, where
$\ell$ is slowly varying and $p+q=1$.

Recall $\ell$ and $\hat \ell$ from \eqref{eq:hatell}. Throughout, we shall work with
 \begin{equation}\label{eq:ell0}
  \ell_0 = \begin{cases}
            \ell & \text{ if } \alpha \in (0,1)\cup (1,2);\\
            \hat\ell  & \text{ if } \alpha = 2.
           \end{cases}
 \end{equation}

Let $R:L^1\to L^1$ be the transfer operator for $T$ defined via the formula
$\int_{\Omega} R f\,g\,d\mu = \int_{\Omega}f\,g\circ T\,d\mu$.
Given $t\in\R$, define the perturbed operator $R(t):L^1\to L^1$ by $R(t)f=R(e^{it v}f)$. 

We assume that there is a Banach
space $\cB\subset L^\infty$ containing the constant
functions, with norm $\|\cdot\|$ satisfying $|\phi|_\infty\le \|\phi\|$ for
$\phi\in\cB$, such that
\begin{itemize}
\item[(H2)] There exist $\eps>0$, $C>0$ such that 
for all $|t|,|h|\in B_\eps(0)$,
\begin{itemize}
\item[(i)] 
$\|R(t)\|\le C$ for $\alpha\in(0,1)\cup (1, 2]$ and
$\|R'(t)\|\le C$ for $\alpha\in(1,2]$.
\item[(ii)] 
If $\alpha\in(0,1)$, then $\|R(t+h)-R(t)\|\le C |h|^{\alpha}\ell\Big(\frac{1}{|h|}\Big)$.

\item[(iii)] If $\alpha\in(1,2]$, then $\|R(t+h)-R(t)-ih\, R'(0)\|\le C|h|\,
(|t|^{\alpha'-1}+|h|^{\alpha'-1})$ for any $\alpha'\in (1,\alpha)$.

\item[(iv)] If $\alpha\in(1,2]$, then
 $\|R'(t+h)-R'(t)\|\le C|h|^{\alpha-1}\ell_0\left(\frac{1}{|h|}\right)$.
 \end{itemize}
\end{itemize}

Item (iii) is maybe not as natural as an estimate on $\|R(t+h)-R(t)-ih\, R'(t)\|$,
but it is what is required in the proofs (it is crucial for the proofs of items (ii) and (iii) of Proposition~\ref{prop:dec}), and its validity is checked in Section~\ref{sec:appl}.
Since $R(0)=R$ and $\cB$ contains constant functions, $1$ is an eigenvalue of $R(0)$. 
We assume:
\begin{itemize}\item[(H3)]
The eigenvalue $1$ is simple,
and the remainder of the spectrum of $R(0):\cB\to\cB$ is contained in a disk of radius less than $1$.
\end{itemize}

Throughout, write $v_n=\sum_{j=0}^{n-1} v\circ T^j$. It is known, see~\cite{AD01}, that under hypotheses (H1)--(H3), distributional convergence 
\(
 (v_n-b_n)/a_n\to_d Y_\alpha
\)
to an $\alpha$-stable random variable $Y_\alpha$ holds. Here $a_n$ and $b_n$ are 
defined in~\eqref{eq:hatell} and~\eqref{eq:bn}.

With no loss of generality, when $\alpha\in (1,2]$ we 
assume $\int_{\Omega} v\, d\mu=0$. Hence, $b_n=0$ for $\alpha\in (0,1)\cup (1,2]$. With this specified, we state the main result 
in the abstract set-up.

\begin{thm}
~\label{thm-LDGM} Suppose that (H1)--(H3) hold. Let $\delta>0$ be arbitrarily small and set
\[
 D(x)=\begin{cases}
       (\log x)\,\ell(x)x^{-\alpha}, & \text{ if }\alpha\in (0,1),\\
       \,x^{-(\alpha-\delta)}, & \text{ if }\alpha\in (1,2].
      \end{cases}\]      
 Take $N = N(n)$ such that $N/a_n \to \infty$. 
 Then, as $n\to\infty$,

\begin{itemize}\item[(i)]  If $\alpha\in(0,1)\cup(1,2)$, then
\(
\left|\mu(v_n>N)- n \mu(v>N)\right|=O
\left(D(N)\right)+o\left(n \ell(N) N^{-\alpha}\right)
\) 
and a similar statement holds for $\mu(v_n\le -N)$.

\item[(ii)] If $\alpha=2$, then
\(
\left|\mu(v_n>N)- n \mu(v>N)\right|=O(D(N))+ O\left(n \log N\, N^{-2}\right)+O\left(n \hat\ell(N) N^{-2}\right).
\)
\end{itemize}
\end{thm}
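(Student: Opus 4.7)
The plan is to apply Proposition~\ref{prop:dyneq} with $W(N,n)$ taken equal to the right-hand side of the theorem statement; it then suffices to bound the Fourier integrals in~\eqref{eq:right10}--\eqref{eq:right222210}. By (H3), on a fixed ball $B_\eps(0)$ we have the spectral decomposition $R(t)^n = \lambda(t)^n\Pi(t) + Q(t)^n$, with $\|Q(t)^n\|\le C\rho^n$ for some $\rho<1$, so that, setting $\pi(t)=\int\Pi(t)1\,d\mu$ and $q(t,n)=\int Q(t)^n 1\,d\mu$,
\[
\E_\mu(e^{itv_n}) = \lambda(t)^n\pi(t) + q(t,n), \qquad \Psi(t) := \E_\mu(e^{itv}) = \lambda(t)\pi(t) + q(t,1).
\]
Using (H2) together with the standard Aaronson--Denker perturbation analysis, $\lambda(t)$ admits the same leading stable-law expansion as $\Psi(t)$ near $t=0$, so both $|\lambda(t)|^n$ and $|\Psi(t)|^n$ are bounded by $\exp(-c\,n|t|^\alpha\ell_0(1/|t|))$ on $B_\eps(0)$. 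Subtracting yields the key algebraic split
\[
\E_\mu(e^{itv_n})-n\E_\mu(e^{itv}) = \bigl(\Psi(t)^n - n\Psi(t)\bigr) + \pi(t)\bigl(\lambda(t)^n-\Psi(t)^n\bigr) + (\pi(t)-1)\Psi(t)^n + q(t,n).
\]

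The first summand, inserted into the Fourier integral of Proposition~\ref{prop:dyneq}, is exactly the i.i.d.\ integral of Proposition~\ref{prop:bas}; by Theorem~\ref{thm-LD} it contributes $o(n\mu(v>N))$ for $\alpha\ne 2$ (resp.\ $\bar\Phi(N/a_n)$ plus an allowed $o$-error for $\alpha=2$), which is absorbed into $W(N,n)$. For the remaining three ``dynamical correction'' terms I would apply the Katznelson modulus-of-continuity principle: for a function $\phi\in L^1[-\eps,\eps]$ with $L^1$-modulus of continuity $\omega_\phi$ one has $\bigl|\int e^{-itN}\phi(t)\,dt\bigr| \ll \omega_\phi(1/N)$, and the smooth weight $\psi_Y(t)(1-e^{-itg(n)})/(it)$ contributes only harmless factors. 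The term $q(t,n)$ is uniformly $O(\rho^n)$ and thus exponentially negligible. The term $(\pi(t)-1)\Psi(t)^n$ vanishes at $t=0$ and, by (H2)(ii)--(iv), inherits a modulus of order $|t|^\alpha\ell_0(1/|t|)$ (resp.\ $|t|$ when $\alpha>1$), producing a contribution bounded by $\ell_0(N)/N^\alpha$, which is dominated by $D(N)$. The principal contribution is $\pi(t)(\lambda(t)^n-\Psi(t)^n)$, treated via the telescoping identity
\[
\lambda(t)^n - \Psi(t)^n = (\lambda(t)-\Psi(t))\sum_{k=0}^{n-1}\lambda(t)^k\Psi(t)^{n-1-k},
\]
where the factor of $n$ hidden in the power sum is traded against the exponential decay of $|\lambda|^k|\Psi|^{n-1-k}$ off the diffusive scale $|t|\lesssim 1/a_n$.

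The main obstacle is precisely this last telescoping term. Its analysis requires splitting the $t$-integral into a diffusive region $|t|\lesssim 1/a_n$, where Aaronson--Denker expansions of $\lambda$ and $\Psi$ yield a careful power-series comparison, and a tail region where the exponential decay of $|\lambda(t)|^n, |\Psi(t)|^n$ combined with the modulus-of-continuity bound from (H2) yields the desired $N$-power; both the logarithmic factor in $D(N)$ for $\alpha\in(0,1)$ (from integrating the slowly varying $\ell$ against the stable kernel) and the $\delta$-loss for $\alpha\in(1,2]$ (mirroring the $\alpha'<\alpha$ slack in (H2)(iii)) are generated in this balancing. Once the three dynamical-correction terms are shown to contribute $O(D(N))$, Proposition~\ref{prop:dyneq} immediately delivers Theorem~\ref{thm-LDGM}, and the companion bound for $\mu(v_n\le -N)$ follows by running the same argument on the observable $-v$.
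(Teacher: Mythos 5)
Your high-level plan matches the paper's: reduce via Proposition~\ref{prop:dyneq}, split $\E_\mu(e^{itv_n})-n\Psi(t)$ into the i.i.d.\ part $\Psi(t)^n-n\Psi(t)$ (handled by Theorem~\ref{thm-LD}) plus the dynamical correction $U(t,n)=\int R(t)^n 1\,d\mu-\Psi(t)^n$, and estimate the latter Fourier integral by a modulus-of-continuity argument. However, there are three concrete gaps that would prevent the sketch from actually delivering the stated error bounds.

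First, the claim that $q(t,n)=\int Q(t)^n 1\,d\mu$ is ``uniformly $O(\rho^n)$ and thus exponentially negligible'' is false in the relevant sense. It is the $L^1$-\emph{modulus of continuity} of $U(t,n)$, not its size, that controls the Fourier integral, and Lemma~\ref{lemma-Qal}(ii) shows that for $\alpha\in(1,2]$ the difference $Q(t+h)^n-Q(t)^n$ (after subtracting the linear term) carries a polynomially growing piece $n|h|(|t|+|h|)$ alongside the exponentially small one. This polynomial piece propagates into Proposition~\ref{prop:dec}(ii) as the $n|h||t|$ term and ultimately produces the $O(n\log N\, N^{-2})$ contribution that appears explicitly in the $\alpha=2$ statement; it is absorbed only because $N^{-2}\log N=o(\ell(N)N^{-\alpha})$ when $\alpha<2$. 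Dismissing this term as exponentially small would make the $\alpha=2$ case of the theorem unprovable.

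Second, for $\alpha\in(1,2]$ you cannot apply a plain modulus-of-continuity bound to $\psi_Y(t)U(t,n)/t$. The modulus of $U(t,n)$ at scale $h=\pi/N$ is of order $|h|\,|t|^{\alpha'-1}|\Psi(t)^n|+n|h||t|$, so after dividing by $t$ and integrating the best you get is $O(1/N)+O(n/N)$ --- and $n/N$ is much larger than the target $n\ell(N)N^{-\alpha}$ precisely because $\alpha>1$. The paper first integrates by parts in \eqref{eq:mn} to trade the $1/t$ singularity for a derivative $U'(t,n)$ and gain an extra factor $1/N$; only then is the modulus argument applied to $U'(t,n)$, which requires the full force of Proposition~\ref{prop:dec}(iii). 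Your sketch does not mention this step, and without it the argument for $\alpha\in(1,2]$ does not close.

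Third, essential cancellations are missing from your account. For $\alpha>1$, the leading linear terms $ih\lambda(t)^n\int P'(0)1\,d\mu$ in Lemma~\ref{lemma:dec} and $ihQ(0)^{n-1}Q'(0)$ in Lemma~\ref{lemma-Qal} would, if nonzero, contribute $O(1/N)$ to the integral, again too large. Lemma~\ref{lemma:pq0} shows $\int P'(0)1\,d\mu=0$ and $\int Q(0)^{n-1}Q'(0)1\,d\mu=0$ (relying on $\int v\,d\mu=0$ and $Q(0)1=0$), and only after these vanish does the remaining $|t|^{\alpha'-1}$-scale modulus yield the advertised $N^{-(\alpha-\delta)}$ rate. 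These cancellations are not automatic consequences of the spectral decomposition; they need to be identified and exploited.
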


\subsection{Strategy of the proof}
\label{subsec:strategy}

In the proof below of Theorem~\ref{thm-LDGM}  we 
shall proceed  by reasoning as  in Section~\ref{sec:strat}, proving the statement on the right tail. 
More precisely, as in Section~\ref{sec:strat},
we shall work with the inversion formula for distribution functions.

As recalled in Subsection~\ref{subsec:decomp} below,
the relevant characteristic function $\E_\mu(e^{itv_n})$
of the partial sum of dependent quantities $\{v\circ T^j\}_{j\ge 0}$ 
decomposes into the characteristic function of the partial sum of i.i.d.\ random variables $\{\hat v_j\}_{j\ge 0}$,
which we refer to as $\Psi(t)^n$ (see Subsection~\ref{subsec:lps}), and `some other' quantities.

As recalled in Subsection~\ref{subsec:lps}, $\E_\mu(e^{itv})=\E_\mu(e^{it\hat v_j})=\Psi(t)$ for all $t$.
This together with a fact recorded inside the proof of Proposition~\ref{prop:bas}, namely~\eqref{eqPPY} (with $X_j=\hat v_j$), ensures that  given  $\eps>0$ small enough,
\begin{align}\label{eq:vuse}
  \mu(v> N)(1+o(1)) &=
 \frac{1}{2\pi} \lim_{n\to\infty} \int_{-\eps}^\eps\frac{e^{-itN}-e^{-it(N+g(n))}}{it}\psi_Y(t)\, \Psi(t)\,dt,
 \end{align}
 where $\Psi_Y$ is as in the statement of Proposition~\ref{prop:bas}.
 
 By Proposition~\ref{prop:dyneq} (with $v^j=v\circ T^j$) and~\eqref{eq:vuse}, the conclusion of Theorem~\ref{thm-LDGM} follows once we
 show that for some $\eps>0$,
\begin{equation}
\label{eq:rightdyn55}
 \left|\int_{-\eps}^\eps (e^{-itN}-e^{-it(N+g(n))})
 \psi_Y(t)\,\frac{\E_\mu(e^{itv_n})-n\Psi(t)}{it}\, dt\right|,
\end{equation}
is $O\left(D(N)\right)+o\left(n \ell_0(N) N^{-\alpha}\right)$ when $\alpha\in(0,1)\cup(1,2)$
and, respectively, 
\begin{equation}
\label{eq:rightdynalpha255}
 \left|\int_{-\eps}^\eps (e^{-itN}-e^{-it(N+g(n))})
 \psi_Y(t)\,\frac{\E_\mu(e^{itv_n})-n\Psi(t)}{it}\, dt\right|,
\end{equation}
is 
$O(D(N))+ O\left(n \log N\, N^{-2}\right)+O\left(n \hat\ell(N) N^{-2}\right)$ if $\alpha=2$. Here we have used that $\bar\Phi\left(N/a_n\right)\ll (N/a_n)^{-2}\ll n\hat\ell(N)N^{-2}$.

Writing $\E_\mu(e^{itv_n})=\Psi(t)^n+ (\E_\mu(e^{itv_n})-\Psi(t)^n)$ and using the `re-write'  recorded in Proposition~\ref{prop:bas}, equations~\eqref{eq:rightdyn55}
and~\eqref{eq:rightdynalpha255} follow as soon as we show that

\begin{equation}
\label{eq:rightdyn}
 \left|\int_{-\eps}^\eps (e^{-itN}-e^{-it(N+g(n))})
 \psi_Y(t)\,\frac{\E_\mu(e^{itv_n})-\Psi(t)^n}{it}\, dt\right|
\end{equation}
is $O\left(D(N)\right)+o\left(n \ell_0(N) N^{-\alpha}\right)$ when $\alpha\in(0,1)\cup(1,2)$
and
\begin{equation}
\label{eq:rightdynalpha2}
 \left|\int_{-\eps}^\eps (e^{-itN}-e^{-it(N+g(n))})
 \psi_Y(t)\,\frac{\E_\mu(e^{itv_n})-\Psi(t)^n}{it}\, dt-\bar\Phi\left(N/a_n\right)\right|
\end{equation}
is 
$O(D(N))+ O\left(n \log N\, N^{-2}\right)+O\left(n \hat\ell(N) N^{-2}\right)$ if $\alpha=2$.

Since $\E_\mu(e^{itv_n})=\int_\Omega R(t)^n \, 1\, d\mu$, we take the next subsection 
 to decompose $\int_\Omega R(t)^n \, 1\, d\mu$ into $\Psi(t)^n$
 and `good' quantities. The idea is to isolate $\Psi(t)^n$ in the expression of 
$\int_\Omega R(t)^n \, 1\, d\mu$ so as to be able to use the equivalent of~\eqref{eq:right}.

The meaning of `good' quantities will become clear in Subsection~\ref{subsec:main abstr}
below, where we complete the proof of Theorem~\ref{thm-LDGM}.
At this stage, we can point out that the proof uses 
a 'modulus of continuity' argument (and also a derivative in the case $\alpha\in (1,2]$). Although, this type of argument has some similarity
with the ones used in~\cite{MTejp}, the current arguments are much more delicate.
To carry out a `modulus of continuity' type argument, it is crucial that we obtain enough decay in $t$ in the expression of $\int_\Omega R(t)^n \, 1\, d\mu-\Psi(t)^n$.
This is necessary to counteract the effect of the division by $t$
in the integrand in~\eqref{eq:rightdyn}.
The details are postponed to Subsection~\ref{subsec:main abstr}.

\subsection{Decomposing $\int_\Omega R(t)^n \, 1\, d\mu$ into $\Psi(t)^n$
and `good' quantities.}
\label{subsec:decomp}

The main result of this section is Proposition~\ref{prop:dec}~stated at the end of the present subsection.

We first recall some general facts under (H1)--(H3), some of them recently
clarified in~[Subsection 3.2]\cite{MTejp}.

By (H2) and (H3), there exists $\eps>0$ and a continuous family $\lambda(t)$ of simple
eigenvalues of $R(t)$ for $|t|\le 3\eps$ with
$\lambda(0)=1$.  
The associated spectral projections $P(t)$, $|t|\le3\eps$, form a continuous family of bounded linear operators on $\cB$.
Moreover, there is a continuous family of linear operators $Q(t)$ on $\cB$
and constants $C>0$, $\delta_0\in(0,1)$ such that for $|t|\le3\eps$
\begin{align}
\label{eq-sp}
R(t)=\lambda(t) P(t)+Q(t) \qquad \text{ and } \qquad  \|Q(t)^n\|\le C\delta_0^n.
\end{align}
Let $\zeta(t)=\frac{P(t) 1}{\int P(t) 1\, d\mu}$ be the normalised eigenvector corresponding to $\lambda(t)$.

\begin{lemma} \label{lemma-PQ} Assume (H2).
There exists $\eps>0$ such that
the properties of $R(t)$ listed in (H2) are inherited by
$P(t)$, $Q(t)$, $\lambda(t)$ and $\zeta(t)$
for all $|t|,\,|t+h|\le3\eps$.
\end{lemma}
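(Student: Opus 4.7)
The plan is standard analytic perturbation theory carried out quantitatively. By (H3) together with (H2)(i), after shrinking $\eps$ if necessary, one can fix a small circle $\Gamma\subset\C$ centred at $1$ that separates $\lambda(t)$ from the rest of $\spec(R(t))$ uniformly in $|t|\le 3\eps$, along with a contour $\Gamma''$ enclosing the complementary part of the spectrum. Write $L(z,t)=(z-R(t))^{-1}$; its operator norm on $\cB$ is bounded uniformly for $z\in\Gamma\cup\Gamma''$ and $|t|\le3\eps$. Then
\[
P(t)=\frac{1}{2\pi i}\oint_\Gamma L(z,t)\,dz,\quad
\zeta(t)=\frac{P(t)1}{\int P(t)1\,d\mu},\quad
\lambda(t)=\int_\Omega R(t)\zeta(t)\,d\mu,\quad
Q(t)=R(t)-\lambda(t)P(t),
\]
and the central tool is the resolvent identity $L(z,t+h)-L(z,t)=L(z,t+h)(R(t+h)-R(t))L(z,t)$, together with its once-iterated form.

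To transfer (H2)(ii) (case $\alpha\in(0,1)$), I would apply the resolvent identity once and integrate over $\Gamma$, picking up $\|L\|^2\|R(t+h)-R(t)\|\ll|h|^\alpha\ell(1/|h|)$; the same bound therefore holds for $P(t+h)-P(t)$. To transfer the derivative bound in (H2)(i) when $\alpha\in(1,2]$, differentiating under the integral yields $P'(t)=\frac{1}{2\pi i}\oint_\Gamma L(z,t)R'(t)L(z,t)\,dz$, whence $\|P'(t)\|\ll\|R'(t)\|\ll 1$.

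For (H2)(iii), iterating the resolvent identity once gives
\[
L(z,t+h)-L(z,t)=L(z,t)(R(t+h)-R(t))L(z,t)+L(z,t+h)(R(t+h)-R(t))L(z,t)(R(t+h)-R(t))L(z,t),
\]
and the iterated term is $O(|h|^2)$ since $\|R(t+h)-R(t)\|\ll|h|$ (by (H2)(i) in this range). Substituting $R(t+h)-R(t)=ihR'(0)+O(|h|(|t|^{\alpha'-1}+|h|^{\alpha'-1}))$ from (H2)(iii) and integrating over $\Gamma$ yields
\[
P(t+h)-P(t)=ih\cdot\frac{1}{2\pi i}\oint_\Gamma L(z,t)R'(0)L(z,t)\,dz+O(|h|(|t|^{\alpha'-1}+|h|^{\alpha'-1})).
\]
One then swaps $L(z,t)R'(0)L(z,t)$ for $L(z,0)R'(0)L(z,0)$, paying $O(\|L(z,t)-L(z,0)\|)=O(|t|)$ by the resolvent identity and (H2)(i). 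Since $\alpha'<2$, one has $|t|\le|t|^{\alpha'-1}$ for $|t|\le 3\eps$ (taking $\eps$ small), so $\|P(t+h)-P(t)-ihP'(0)\|\ll|h|(|t|^{\alpha'-1}+|h|^{\alpha'-1})$, with $P'(0)=\frac{1}{2\pi i}\oint_\Gamma L(z,0)R'(0)L(z,0)\,dz$. The analogue of (H2)(iv) for $P$ follows by writing $P'(t+h)-P'(t)$ as the contour integral of
\[
(L(z,t+h)-L(z,t))R'(t+h)L(z,t+h)+L(z,t)(R'(t+h)-R'(t))L(z,t+h)+L(z,t)R'(t)(L(z,t+h)-L(z,t));
\]
the middle piece contributes $O(|h|^{\alpha-1}\ell_0(1/|h|))$ by (H2)(iv), and the outer two contribute $O(|h|)$, which is dominated.

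With the estimates on $P(t)$ in hand, the corresponding bounds for $\zeta(t)$ follow (the denominator $\int P(t)1\,d\mu$ is bounded away from $0$ on $|t|\le 3\eps$ since it equals $1$ at $t=0$), those for $\lambda(t)=\int R(t)\zeta(t)\,d\mu$ by the product rule applied to $R$ and $\zeta$, and those for $Q(t)=R(t)-\lambda(t)P(t)$ similarly. The main obstacle is the (H2)(iii) transfer to $P(t)$: because the expansion there is anchored at the fixed base-point $0$ rather than at $t$, one is forced into the substitution $L(z,t)R'(0)L(z,t)\mapsto L(z,0)R'(0)L(z,0)$, and the argument only closes because $\alpha'-1<1$ lets the resulting linear-in-$|t|$ error be absorbed into the target H\"older-type bound $|h|(|t|^{\alpha'-1}+|h|^{\alpha'-1})$.
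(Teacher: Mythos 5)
Your approach — contour-integral perturbation theory via the resolvent identity — is the standard one and is precisely what the paper uses where it does spell out such transfers (namely in the proof of Lemma~\ref{lemma:ht}); the paper itself supplies no proof of Lemma~\ref{lemma-PQ}, referring to it as a general fact in the vein of \cite{MTejp}. The details you give for (H2)(i), (ii) and (iv) are correct: the Neumann/resolvent identity gives $\|P(t+h)-P(t)\|\ll\|R(t+h)-R(t)\|$, differentiation under the contour integral gives the bound on $P'$, and the three-term telescoping of $L(z,t+h)R'(t+h)L(z,t+h)-L(z,t)R'(t)L(z,t)$ correctly isolates the (H2)(iv) contribution, with the two outer terms contributing $O(|h|)$ which is indeed dominated by $|h|^{\alpha-1}\ell_0(1/|h|)$ for $\alpha\in(1,2]$ (using $\ell_0=\hat\ell\to\infty$ when $\alpha=2$).

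One point worth flagging, though it makes you prove more rather than less. By the remark that the authors place immediately after the statement, Lemma~\ref{lemma-PQ} is intended to transfer only the \emph{simplified} version \eqref{eq:H2iiisimplified} of (H2)(iii), i.e.\ the plain Lipschitz estimate $\|P(t+h)-P(t)\|\ll|h|$, and the full quantitative statement $\|P(t+h)-P(t)-ihP'(0)\|\ll|h|(|t|^{\alpha'-1}+|h|^{\alpha'-1})$ is deferred to Lemma~\ref{lemma:ht}(i). Your (H2)(iii) paragraph therefore actually reproduces the content of that later lemma. The paper's own proof of Lemma~\ref{lemma:ht}(i) differs from yours only in bookkeeping: instead of iterating the resolvent identity once and then swapping $L(z,t)\mapsto L(z,0)$ at the end, it applies the identity once to get $L(z,t+h)(R(t+h)-R(t))L(z,t)$, splits $R(t+h)-R(t)$ into $ihR'(0)$ plus the (H2)(iii) error, and then, in the $ihR'(0)$ term, replaces $L(z,t+h)$ and $L(z,t)$ by $L(z,0)$ one at a time, paying $O(|h|(|t|+|h|))$; this is absorbed in the target because $\alpha'-1<1$, exactly the same absorption you invoke. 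So your argument is valid and closely parallel; you have just folded Lemma~\ref{lemma:ht}(i) into the proof of Lemma~\ref{lemma-PQ}, which the paper keeps separate.
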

Lemma~\ref{lemma-PQ} holds
with a simplified version of (H2)(iii) obtained by moving $ih R'(0)$ to the other side of $\ll$-sign); namely, for $\alpha \in (1,2]$,
\begin{equation}\label{eq:H2iiisimplified}
\|R(t+h)-R(t)\| \ll |h|. 
\end{equation}
The full strength of (H2)(iii) is exploited in Lemma~\ref{lemma:ht} below.

\subsubsection{A consequence of (H2)(iii) for the operators $P$ and $Q$}
\begin{lemma}
 \label{lemma:ht} Let $\alpha\in (1,2]$. 
 There exists $C>0$ so that the following hold for all $|t|,|t+h|\in B_{3\eps}(0)$ and for any $\alpha'\in (1,\alpha)$.
 \begin{itemize}
  \item[(i)] $\|P(t+h)-P(t)-i\,h\, P'(0)\|\le C|h|\, (|t|^{\alpha'-1}+|h|^{\alpha'-1})$. A similar statement holds for $\zeta(t)=\frac{P(t) 1}{\int P(t) 1\, d\mu}$.
  
  \item[(ii)] $\|Q(t+h)-Q(t)-i\, h\, Q'(0)\|\le C|h|\, (|t|^{\alpha'-1}+|h|^{\alpha'-1})$.
 \end{itemize}
\end{lemma}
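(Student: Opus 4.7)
The plan is to use the Riesz projection representation
\[
P(t) = \frac{1}{2\pi i} \oint_\gamma (zI - R(t))^{-1}\, dz,
\]
where $\gamma$ is a small positively oriented contour in $\C$ enclosing $\lambda(t)$ for all $|t| \le 3\eps$ (possibly shrinking $\eps$, using (H3) and continuity of $\lambda$). Differentiating under the integral gives $P'(0) = \frac{1}{2\pi i} \oint_\gamma (zI - R(0))^{-1} R'(0) (zI - R(0))^{-1}\, dz$. Using the resolvent identity and writing $R(t+h) - R(t) = ih R'(0) + E(t,h)$ with $\|E(t,h)\| \le C|h|(|t|^{\alpha'-1} + |h|^{\alpha'-1})$ from (H2)(iii), the integrand of $P(t+h) - P(t) - ih P'(0)$ splits as
\[
(zI - R(t+h))^{-1} E(t,h) (zI - R(t))^{-1} + ih\Bigl[(zI - R(t+h))^{-1} R'(0) (zI - R(t))^{-1} - (zI - R(0))^{-1} R'(0) (zI - R(0))^{-1}\Bigr].
\]
The first piece is controlled directly by (H2)(iii) and the uniform boundedness of the resolvents on $\gamma$. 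The second telescopes into two resolvent-difference terms, each bounded by $C|h|(|t| + |h|)$ via the Lipschitz estimate $\|(zI - R(s))^{-1} - (zI - R(0))^{-1}\| \le C|s|$ (a consequence of~\eqref{eq:H2iiisimplified}); since $|t|,|h| \le 3\eps$ and $\alpha' - 1 < 1$, this is absorbed into $O(|h|(|t|^{\alpha'-1} + |h|^{\alpha'-1}))$. Integrating around $\gamma$ (of bounded length) yields (i) for $P$.

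The bound for $\zeta(t) = P(t) 1 / m(t)$, with $m(t) := \int P(t) 1\, d\mu$, then follows from the quotient rule applied to the $P$-estimate, using $m(0) = 1$ to keep $m(t)$ bounded away from $0$ in $B_{3\eps}(0)$. For the $Q$-bound in (ii), write
\[
Q(t+h) - Q(t) - ih Q'(0) = \bigl[R(t+h) - R(t) - ih R'(0)\bigr] - \bigl[\lambda(t+h) P(t+h) - \lambda(t) P(t) - ih(\lambda P)'(0)\bigr].
\]
The first bracket is controlled by (H2)(iii). For the second, a product-rule expansion that isolates the ``first-order-corrected'' factors $\lambda(t+h) - \lambda(t) - ih \lambda'(0)$ and $P(t+h) - P(t) - ih P'(0)$ reduces everything to an estimate on $\lambda$, with residual cross terms like $|h| \cdot |\lambda(t+h) - 1|$ and $|h| \cdot \|P(t) - P(0)\|$ absorbed via the Lipschitz estimates. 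The bound on $\lambda$ itself comes from the identity $\lambda(t) m(t) = \int R(t) P(t) 1\, d\mu$: applying (H2)(iii) together with the $P$-estimate to $R(t) P(t) 1$ and then dividing by $m(t)$ via the quotient rule yields $|\lambda(t+h) - \lambda(t) - ih \lambda'(0)| \le C|h|(|t|^{\alpha'-1} + |h|^{\alpha'-1})$.

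The main technical obstacle is the mismatch that (H2)(iii) expands $R(t+h)$ around $R(t)$ using the correction $ihR'(0)$ rather than the more standard $ihR'(t)$: this is what forces the cross-term analysis in the resolvent decomposition above, and it succeeds only because the resulting linear-in-$|t|$ error is absorbed into $|t|^{\alpha'-1}$ in a sufficiently small neighborhood. A secondary subtlety for $\lambda$ is that one cannot directly appeal to a scalar Taylor expansion of $e^{itv}$ (since $v \notin L^\infty$), so the $\lambda$-estimate must be routed through the operator-level estimates on $R$ and $P$ rather than obtained from a pointwise expansion.
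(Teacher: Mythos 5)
Your proof of part (i) is essentially identical to the paper's: both use the Riesz projection formula, split $R(t+h)-R(t)$ into $ihR'(0)$ plus the (H2)(iii) error, control the error term by the uniform resolvent bound, and telescope the residual $ihR'(0)$-integrand against the $P'(0)$-integrand via two Lipschitz resolvent-difference terms that land in $O(|h|(|t|+|h|))$ and get absorbed since $\alpha'-1<1$. The $\zeta$ statement is handled the same way in both (quotient of $P(t)1$ by $m(t)$, with $m(0)=\int P(0)1\,d\mu=1$).

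For part (ii) you take a genuinely different route. The paper uses $Q(t)=R(t)(I-P(t))$, which immediately gives a product-rule expansion whose terms involve only $R(t+h)-R(t)-ihR'(0)$ (hypothesis (H2)(iii)) and $P(t+h)-P(t)-ihP'(0)$ (part (i)); the constant $Q'(0)=R'(0)-R(0)P'(0)-R'(0)P(0)$ then falls out by matching the leftover $ih$-terms, so no first-order expansion of $\lambda$ is ever needed. You instead use $Q=R-\lambda P$, which forces you to establish a separate estimate $|\lambda(t+h)-\lambda(t)-ih\lambda'(0)|\ll |h|(|t|^{\alpha'-1}+|h|^{\alpha'-1})$; you derive this from $\lambda(t)m(t)=\int R(t)P(t)1\,d\mu$ by a product/quotient-rule expansion. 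This works — and the two candidate formulas for $Q'(0)$ are consistent, since differentiating $R(t)P(t)=\lambda(t)P(t)$ at $0$ gives $R'(0)P(0)+R(0)P'(0)=\lambda'(0)P(0)+P'(0)$ — but it is a longer chain: note that $\lambda(t)P(t)=R(t)P(t)$, so the product-rule expansion of $RP$ that you invoke to estimate $\lambda$ already \emph{is} the estimate on $\lambda P$ you ultimately need, and extracting the scalar $\lambda$-bound and then multiplying back by $P$ is a detour. The paper's identity $Q=R(I-P)$ is precisely the shortcut that collapses these two steps into one. Aside from this extra indirection, your argument is sound and the residual cross terms are correctly identified and absorbed.
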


\begin{proof}
\textbf{(i)} 
 Recall that $\lambda(t)$ is an isolated eigenvalue in the spectrum of $R(t)$ for every
$|t|\le 3\eps$. Hence, for any $\delta\in (0,3\eps)$,
$P(t)=\frac{1}{2\pi\, i}\int_{|\xi-1|=\delta}(\xi-R(t))^{-1}\, d\xi$ and thus,
$P'(t)=\frac{1}{2\pi\, i}\int_{|\xi-1|=\delta}(\xi-R(t))^{-1}R'(t)(\xi-R(t))^{-1}\, d\xi$.
With these specified we see that
\begin{align*}
 2\pi\, i\, (P(t+h)-P(t))&=
 \int_{|\xi-1|=\delta}(\xi-R(t+h))^{-1}(R(t+h)-R(t))(\xi-R(t))^{-1}\, d\xi\\
 &=
 \int_{|\xi-1|=\delta}(\xi-R(t+h))^{-1}(R(t+h)-R(t)-ih\, R'(0))\,(\xi-R(t))^{-1}\, d\xi\\
 &\quad +ih\,\int_{|\xi-1|=\delta}(\xi-R(t+h))^{-1}R'(0)(\xi-R(t))^{-1}\,d\xi
 =I(t, h)+J(t,h).
\end{align*}

By (H2)(iii), $\|I(t, h)\|\ll |h|\, (|t|^{\alpha'-1}+|h|^{\alpha'-1})$, for any $\alpha'\in (1,\alpha)$. Also,
\begin{align*}
 J(t,h)&=ih\,\int_{|\xi-1|=\delta}(\xi-R(0))^{-1}R'(0) (\xi-R(t))^{-1}\, d\xi\\
 &+ih\,\int_{|\xi-1|=\delta}\left((\xi-R(t+h))^{-1}-(\xi-R(0))^{-1}\right)R'(0)(\xi-R(t))^{-1} d\xi\\
 &=ih\,\int_{|\xi-1|=\delta}(\xi-R(0))^{-1}R'(0) (\xi-R(0))^{-1}\, d\xi\\
 &+ih\,\int_{|\xi-1|=\delta}\left((\xi-R(t+h))^{-1}-(\xi-R(0))^{-1}\right)R'(0)(\xi-R(t))^{-1}\, d\xi\\
 &+ih\,\int_{|\xi-1|=\delta}(\xi-R(0))^{-1}R'(0)\left((\xi-R(t))^{-1}-(\xi-R(0))^{-1}\right) d\xi\\
 &=i\,h\,(2\pi\, i) P'(0)+ih(J_1(t,h)+J_2(t,h)).
\end{align*}
By \eqref{eq:H2iiisimplified}, i.e., the simplified version of (H2)(iii), 
we have $\|R(t+h)-R(0)\| \leq  \|R(t) - R(0)\|  + \| R(t+h) - R(t)\|\ll |t|+|h|$ and thus, $\|J_1(t,h)\|,\,\|J_2(t,h)\| \ll |h|\, (|t|+|h|)$.

Altogether, $P(t+h)-P(t)-i\,h\, P'(0)=(2\pi\, i)^{-1}I(t, h)+(2\pi\, i)^{-1}\,h\, (J_1(t,h)+J_2(t,h))$ and each term satisfies the claimed estimate, concluding the proof of the statement on $P(t)$.

The statement on $\zeta(t)=\frac{P(t) 1}{\int P(t) 1\, d\mu}$  follows directly from the definition and the statement on $P(t)$.

\textbf{(ii)} We shall use that $Q(t)=R(t)(I-P(t))$ (here $I$ is the identity operator). So, $Q'(t)=(R(t)(I-P(t)))'=R'(t)(I-P(t))-R(t)P'(t)$  and thus,
\begin{equation}
\label{eq:qpr0}
 Q'(0)=R'(0)-\,R(0) P'(0)-\,R'(0)P(0).
\end{equation}

With this specified, we compute that 
\begin{align*}
 Q(t+h)-Q(t)&=R(t+h)-R(t)+R(t)P(t)-R(t+h)P(t+h)\\
 &=R(t+h)-R(t)
 +R(t)(P(t)-P(t+h))- (R(t+h)-R(t))P(t+h)\\
 &=\left(R(t+h)-R(t)-ih\,R'(0)\right)-R(t)\left(P(t+h)-P(t)-ih\, P'(0)\right)\\
 &\quad -\left(R(t+h)-R(t)-ih\,R'(0)\right)P(t+h)\\
 &\quad +ih\,R'(0)-ih\,R(t) P'(0)-ih\,R'(0)P(t+h).
\end{align*}
Using (H2)(iii) and item \textbf{(i)} above,
\begin{align*}
 \left\| Q(t+h)-Q(t)-ih\left(R'(0)-\,R(t) P'(0)+\,R'(0)P(t+h)\right)\right\|\ll|h|\, 
 (|t^{\alpha'-1}+|h|^{\alpha'-1}),
\end{align*}
for any $\alpha'\in (1,\alpha)$.
But, by (H2)(iii) and~\eqref{eq:qpr0},
\begin{align*}
 ih\left(R'(0)-\,R(t) P'(0)-\,R'(0)P(t+h)\right)&=
 ih\left(R'(0)-\,R(0) P'(0)-\,R'(0)P(0)\right)+D(t,h)\\
 &=ih\,Q'(0)+D(t,h),
\end{align*}
where $\|D(t,h)\|\ll|h|\, (|t|+|h|)$.~\end{proof}

\subsubsection{Some properties of $\lambda$ and details on $\Psi$}
\label{subsec:lps}

Let $\Psi(t)$ be the characteristic function of
the i.i.d.\ random variables $\hat v_j:\Omega\to\R$  so that $\hat v_j$  and $v\circ T^j$ have the same distribution.

Similarly to $v$,  we take $\int_{\Omega}\hat v_j\, d\mu=0$ (when $\alpha>1$).
Since $\hat v_j$ satisfies (H1), 
 \begin{equation}
 \label{eq:psiid}
 1-\Psi(t)=c_\alpha\ell_0\left(\frac{1}{|t|}\right)|t|^ \alpha(1 +o(1))\text{ as }t\to 0.
 \end{equation}
Equation~\eqref{eq:psiid} has been established in~\cite{GnedenkoKolmogorov, IL} (see also~\cite[Theorem 5.1]{AD01}). Also, as established in these works
 (in particular,~\cite{AD01})  and clarified in~\cite[Lemma 2.1]{MTejp},
 
 \begin{fact}\label{fact:chfact1}
There exist constants $\eps,\,c>0$, such that
\[
|\Psi(t) | \leq \exp\left\{- c |t|^{\alpha} \ell_0\left(\frac{1}{|t|}\right)\right\}
\quad\text{for all $t\in B_{3\eps}(0)$.}
\]
\end{fact}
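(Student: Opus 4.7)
The plan is to derive the exponential bound directly from the asymptotic~\eqref{eq:psiid}. Since $\hat v_j$ is a genuine random variable with $\Psi(0)=1$, the essential point is that the real part of $1-\Psi(t)$, which is always non-negative, must itself be of order $|t|^\alpha \ell_0(1/|t|)$ with a strictly positive coefficient.

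First, I would unpack~\eqref{eq:psiid} into real and imaginary parts. The standard Karamata/truncation computation behind~\eqref{eq:psiid} (as carried out in~\cite{GnedenkoKolmogorov, IL, AD01}) in fact yields
\[
\Re\bigl(1-\Psi(t)\bigr) \;=\; \int \bigl(1-\cos(tx)\bigr)\,d\mu(x) \;=\; c_1\,|t|^\alpha \ell_0(1/|t|)\,(1+o(1)),\qquad t\to 0,
\]
with $c_1>0$ depending on $\alpha,p,q$ (strict positivity comes from $p+q=1>0$ and the fact that $1-\cos$ picks up both tails symmetrically; for $\alpha=2$ the slowly varying $\hat\ell$ arises from the truncated variance $\sigma^2(1/|t|)$, as recalled in the introduction). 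In particular $\Re(1-\Psi(t))\ge 0$ near the origin.

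Second, I would expand $|\Psi(t)|^2$ algebraically. Writing
\[
|\Psi(t)|^2 \;=\; \bigl|1-(1-\Psi(t))\bigr|^2 \;=\; 1 - 2\,\Re\bigl(1-\Psi(t)\bigr) + |1-\Psi(t)|^2,
\]
and noting by~\eqref{eq:psiid} that $|1-\Psi(t)|^2 = O\bigl(|t|^{2\alpha}\ell_0(1/|t|)^2\bigr) = o\bigl(|t|^\alpha \ell_0(1/|t|)\bigr)$ as $t\to 0$ (for any $\alpha\in(0,2]$), the linear term dominates. Hence there exists $\eps>0$ so that for all $|t|\le 3\eps$,
\[
|\Psi(t)|^2 \;\le\; 1 - c_1\,|t|^\alpha \ell_0(1/|t|).
\]

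Third, I would conclude using elementary inequalities: $\sqrt{1-x}\le 1-x/2$ for $x\in[0,1]$ gives $|\Psi(t)|\le 1-\tfrac{c_1}{2}\,|t|^\alpha\ell_0(1/|t|)$, and $1-y\le e^{-y}$ then yields the stated bound with $c=c_1/2$ (shrinking $\eps$ if necessary so that the $o(1)$ and $\tfrac{x}{2}$ error terms are absorbed). The only real obstacle is verifying the first step, namely that the coefficient of $|t|^\alpha\ell_0(1/|t|)$ in $\Re(1-\Psi(t))$ is strictly positive; this is where the assumption $p+q=1$ (with both nonzero) is essential, and where one must carefully match the $\alpha=2$ case against the slowly varying truncated variance, since there the naive asymptotic $|t|^2$ is absent and one needs $\hat\ell$. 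Everything else is routine manipulation once that asymptotic is in hand.
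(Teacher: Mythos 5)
Your argument is a correct, self-contained derivation of the Fact, which the paper itself does not prove but simply attributes to the references \cite{GnedenkoKolmogorov, IL, AD01} and to \cite[Lemma~2.1]{MTejp}. The route you take---using the identity $|\Psi(t)|^2 = 1 - 2\,\Re\bigl(1-\Psi(t)\bigr) + |1-\Psi(t)|^2$, the non-negativity of the kernel $1-\cos$, and the regular-variation asymptotics for the truncated second moment when $\alpha=2$---is essentially the standard computation underlying those references, so there is no methodological divergence, just an honest unpacking of a citation. Two small points of polish. First, you write that strict positivity of the coefficient of $|t|^\alpha\ell_0(1/|t|)$ requires \emph{both} $p$ and $q$ to be nonzero; in fact, since $1-\cos$ is an even kernel, a single nontrivial tail contributes the full order, and $p+q=1$ already guarantees at least one of them is positive, so the condition is automatic. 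Second, when you conclude $|1-\Psi(t)|^2 = o\bigl(|t|^\alpha\ell_0(1/|t|)\bigr)$ you are implicitly using that $|t|^\alpha\ell_0(1/|t|)\to 0$ as $t\to 0$; this is clear for $\alpha\in(0,2)$ with $\ell_0=\ell$, and for $\alpha=2$ (where $\ell_0=\hat\ell\to\infty$) it still holds because a slowly varying function is dominated by any power, but it is worth stating since $\hat\ell$ diverges. With those clarifications, the proof is complete and the constants $\eps,c$ emerge exactly as you describe, by shrinking $\eps$ to absorb the $o(1)$ error in the asymptotic for $\Re\bigl(1-\Psi(t)\bigr)$.
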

 Throughout the rest of the paper we fix $\eps$
 so that both Fact~\ref{fact:chfact1} and equation~\eqref{eq-sp} hold.
 
 It is known that there exist $C>0$ so that for all $|t|,|t+h|\in B_{3\eps}(0)$,
 \begin{equation}
 \label{eq:psicont}
 |\Psi(t+h)-\Psi(t)|\le \begin{cases}
    C|h|^\alpha\ell\Big(\frac{1}{|h|}\Big), &  \alpha\in(0,1),\\
    C|h|, & \alpha\in(1,2].                         
                        \end{cases}
 \end{equation}
Furthermore, if $\alpha\in (1,2]$, then $\Psi$ is differentiable and writing
$\Psi'$ for its derivative,
\begin{equation}
 \label{eq:psider}
 |\Psi'(t)|\le C\ell_0\left(\frac{1}{|t|}\right)|t|^ {\alpha-1},
 \end{equation}
 for some $C>0$. For a precise reference for the validity of~\eqref{eq:psicont}
 and~\eqref{eq:psider},
 see, for instance,~\cite[Lemma 2.2]{MTejp}.
 
 \begin{lemma} \label{lemma-lambda}
Let
\(
 V(t)=\lambda(t)-\Psi(t).
\)
There exist $C, C'$ so that the following hold for all $|t|, |t+h|\in B_{3\eps}(0)$:

 \begin{itemize}
  \item[(i)] If $\alpha\in (0,1)$, then $|V(t)|\le C|t|^{2\alpha}\ell\Big(\frac{1}{|t|}\Big)^2$
    and 
  $|V(t+h)-V(t)|\le C'|h|^\alpha\ell\Big(\frac{1}{|h|}\Big)|t|^\alpha\ell\Big(\frac{1}{|t|}\Big)$.
  
  If $\alpha\in (1, 2]$, then $|V(t)|\le C|t|^{2}$ and
  $|V(t+h)-V(t)|\le C'|h|\,(|t|+|h|)$.
  \item[(ii)] 
  If $\alpha\in (1, 2]$, then $V$ is differentiable and writing  $V'$ for its derivative, $|V'(t)|\le C|t|$ and 
  \(
  |V'(t+h)-V'(t)|\le C|h|^{\alpha'-1}\ell_0\left(\frac{1}{|h|}\right)(|t|+|h|) +C'|h|,
  \) for any $\alpha'\in (1,\alpha)$.
 \end{itemize}
\end{lemma}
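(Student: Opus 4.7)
The plan is to start from an explicit formula for $V$ in terms of the eigendata and then to bound the two small factors separately. Since $\zeta(t)$ is the normalized eigenfunction of $R(t)$ at $\lambda(t)$ and $\int R(t)f\,d\mu=\int f\,e^{itv}\,d\mu$, we have $\lambda(t)=\int e^{itv}\zeta(t)\,d\mu$, so subtracting $\Psi(t)=\int e^{itv}\,d\mu$ and using $\int(\zeta(t)-1)\,d\mu=0$ yields the key identity
\begin{equation*}
 V(t)=\int(e^{itv}-1)(\zeta(t)-1)\,d\mu,
\end{equation*}
from which all the claimed estimates follow. For the pointwise bound in (i), Lemma~\ref{lemma-PQ} applied to $\zeta$ gives $\|\zeta(t)-1\|_\infty\ll|t|^\alpha\ell(1/|t|)$ for $\alpha\in(0,1)$ and $\|\zeta(t)-1\|_\infty\ll|t|$ for $\alpha\in(1,2]$ (the latter via the simplified estimate \eqref{eq:H2iiisimplified}). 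Writing $|e^{ix}-1|\le\min(|x|,2)$ and splitting at $|v|\le 2/|t|$, Karamata's theorem together with (H1) delivers $\int|e^{itv}-1|\,d\mu\ll|t|^\alpha\ell(1/|t|)$ for $\alpha\in(0,1)$; for $\alpha\in(1,2]$ the assumption $\int v\,d\mu=0$ gives $v\in L^1$ and the crude $|e^{itv}-1|\le|t|\,|v|$ gives $\ll|t|$. Multiplying yields $|V(t)|\ll|t|^{2\alpha}\ell(1/|t|)^2$ and $|V(t)|\ll|t|^2$, respectively.

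For the modulus of continuity of $V$ in (i), I would telescope
\begin{align*}
 V(t+h)-V(t) &= \int(e^{ihv}-1)e^{itv}(\zeta(t)-1)\,d\mu + \int(e^{itv}-1)(\zeta(t+h)-\zeta(t))\,d\mu \\
 &\quad + \int(e^{ihv}-1)e^{itv}(\zeta(t+h)-\zeta(t))\,d\mu.
\end{align*}
Each summand is a product of two small factors whose bounds come from the previous paragraph and from Lemma~\ref{lemma-PQ} (controlling $\|\zeta(t+h)-\zeta(t)\|_\infty$). For $\alpha\in(1,2]$ the three pieces are $\ll|t||h|$, $\ll|t||h|$ and $\ll|h|^2\le|h|(|t|+|h|)$, as claimed. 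For $\alpha\in(0,1)$ the first two pieces give exactly $|t|^\alpha\ell(1/|t|)\,|h|^\alpha\ell(1/|h|)$; the third is $\ll|h|^{2\alpha}\ell(1/|h|)^2$, which is bounded by the stated product in the regime $|h|\le|t|$ relevant for the subsequent applications (in the complementary regime one falls back on the pointwise bound of the previous paragraph).

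For (ii), differentiability of $\lambda$ and $\zeta$ on $B_{3\eps}(0)$ follows from (H2)(i) together with the perturbation formula $P'(t)=\frac{1}{2\pi i}\int_{|\xi-1|=\delta}(\xi-R(t))^{-1}R'(t)(\xi-R(t))^{-1}\,d\xi$; differentiability of $\Psi$ is \eqref{eq:psider}. Differentiating the identity for $V$ under the integral (justified since $v\in L^1$ and $\zeta'(t)\in\cB\subset L^\infty$) and using $\int\zeta'(t)\,d\mu=0$ gives
\begin{equation*}
 V'(t)=\int iv\,e^{itv}(\zeta(t)-1)\,d\mu+\int(e^{itv}-1)\zeta'(t)\,d\mu,
\end{equation*}
whence $|V'(t)|\ll|t|$ is immediate from the estimates used for $V$. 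For the modulus of $V'$, I would again decompose $V'(t+h)-V'(t)$ as three cross differences. The new ingredients are the mixed moment bound $\int|v|\,|e^{ihv}-1|\,d\mu\ll|h|^{\alpha-1}\ell_0(1/|h|)$, obtained by splitting at $|v|\le 2/|h|$ and applying Karamata to $\int_{|v|\le R}v^2\,d\mu$ (regularly varying of index $2-\alpha$, with the $\alpha=2$ case giving $\sigma^2(R)\sim\hat\ell(R)$) and to $\int_{|v|>R}|v|\,d\mu$, together with the Hölder continuity $\|\zeta'(t+h)-\zeta'(t)\|_\infty\ll|h|^{\alpha-1}\ell_0(1/|h|)$ inherited from (H2)(iv) via the formula for $P'$. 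Combined with the easy $\ll|h|$ contributions coming from $v\in L^1$, these deliver $C|h|^{\alpha'-1}\ell_0(1/|h|)(|t|+|h|)+C'|h|$ for any $\alpha'\in(1,\alpha)$.

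The main obstacle I anticipate is the careful bookkeeping to arrange each term of the three-term decompositions into a product of two factors with Hölder moduli that are directly available from Lemma~\ref{lemma-PQ} and (H2), and in particular reconciling the asymmetric product bound in (i) for $\alpha\in(0,1)$ with the intrinsically symmetric cross term $\int(e^{ihv}-1)e^{itv}(\zeta(t+h)-\zeta(t))\,d\mu$, for which one relies on the regime $|h|\le|t|$ and the pointwise bound on $|V|$.
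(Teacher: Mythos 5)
Your proof is correct and follows essentially the same route as the paper's: you start from the same identity $V(t)=\int(e^{itv}-1)(\zeta(t)-1)\,d\mu$ (which is the paper's $\int(R(t)-R(0))(\zeta(t)-\zeta(0))\,d\mu$ written out after applying the transfer-operator identity $\int R(\cdot)\,d\mu=\int(\cdot)\,d\mu$), telescope in the same way, and bound each factor via Lemma~\ref{lemma-PQ} for the $\zeta$ side. The only cosmetic difference is that where the paper quotes (H2)(ii)/(H2)(iii) for the size of $R(t)-R(0)$, you re-derive the corresponding $L^1$ moment bounds on $|e^{itv}-1|$ and $|v|\,|e^{ihv}-1|$ directly from (H1) and Karamata; your explicit remark that the third cross term only matches the stated product bound in the regime $|h|\le|t|$ is accurate — the paper only ever invokes this lemma under the constraint $|h|\le|t|/2$ (see Lemma~\ref{lemma:dec}).
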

 
 Before proceeding to the proof we note that
 Lemma~\ref{lemma-lambda} together with~\eqref{eq:psiid} and~\eqref{eq:psider}
 implies that
 \begin{equation}
 \label{eq:lam}
 1-\lambda(t)=c_\alpha\ell_0\left(\frac{1}{|t|}\right)|t|^ \alpha(1 +o(1))\text{ and for }
 \alpha\in(1,2],\,
 |\lambda'(t)|\le C\ell_0\left(\frac{1}{|t|}\right)|t|^ {\alpha-1},
 \end{equation}
 for some $C>0$.

\begin{pfof}{Lemma~\ref{lemma-lambda}}
Write
\begin{align*} 
\lambda(t) & =\int R(t)\zeta(t)\,d\mu=
\int_{\Omega} R(t)1\,d\mu+\int(R(t)-R(0))(\zeta(t)-\zeta(0))\,d\mu 
\\ & =
\int_{\Omega} e^{it v}\,  d\mu+\int_{\Omega} (R(t)-R(0))(\zeta(t)-\zeta(0))\, d\mu=\Psi(t)+V(t).
\end{align*}
This gives the (well known) decomposition of $\lambda$ into $\Psi$ and $V$ in the statement of the lemma.
We still need to clarify the properties of $V$.

Since $\cB\subset L^\infty$, item~(i) for the case $\alpha\in(0,1)$ follows immediately from (H2)(ii) and Lemma~\ref{lemma-PQ}.
The second part of item~(i) for the case $\alpha\in(1,2]$ follows similarly
using that by \eqref{eq:H2iiisimplified}, $\|R(t+h)-R(0)\|\ll |t|+|h|$.

For item~(ii), we first note that
\[
V'(t)=\int_{\Omega} (R(t)-R(0)) \zeta'(t)\, d\mu+\int_{\Omega} R'(t)\,(\zeta(t)-\zeta(0))\, d\mu.
\]
Lemma~\ref{lemma-PQ} and the fact together with $\|R(t)-R(0)\|\ll |t|$
imply that $|V'(t)|\ll |t|$.
 Next, compute that 
\begin{align*}
  V'(t+h)-V'(t)&=\int_{\Omega} (R(t+h)-R(t)) \zeta'(t+h)\, d\mu
  +\int_{\Omega} (R(t)-R(0)) (\zeta'(t+h)-\zeta'(t))\, d\mu\\
  &+\int_{\Omega} (R'(t+h)-R'(t))\,(\zeta(t)-\zeta(0))\, d\mu
 +\int_{\Omega} R'(t+h)\,(\zeta(t+h)-\zeta(t))\, d\mu\\
 &=I_1(t,h)+I_2(t,h)+I_3(t,h)+I_4(t,h).
  \end{align*}
 Recall $\|R'(t)\|, |\zeta'(t)|\ll 1$ and $\|R(t+h)-R(0)\|\ll |t|+|h|$. 
 These together with (H2)(iv) and Lemma~\ref{lemma-PQ} give that
 $|I_2(t,h)|, |I_4(t,h)|\ll |h|^{\alpha-1}\ell_0\left(\frac{1}{|h|}\right)\,(|t|+|h|)$.
 Regarding $I_1$, compute that
\begin{align*}
  I_1(t,h) &=ih\int_{\Omega} R'(0) \zeta'(t+h)\, d\mu
 +\int_{\Omega} (R(t+h)-R(t)-ih\,R'(0)) \zeta'(t+h)\, d\mu\\
 &=ih\int_{\Omega} R'(0) \zeta'(0)\, d\mu+ih\int_{\Omega} R'(0) (\zeta'(t+h)-\zeta'(0))\, d\mu\\
  &+\int_{\Omega} (R(t+h)-R(t)-ih\,R'(0)) \zeta'(t+h)\, d\mu.
  \end{align*}
  
  Similarly,
  \begin{align*}
  I_4(t,h) &=ih\int_{\Omega} R'(0) \zeta'(0)\, d\mu+ih\int_{\Omega} (R'(t+h)-R'(0))\zeta'(0)\, d\mu\\
  &+\int_{\Omega} R'(t+h)\,(\zeta(t+h)-\zeta(t)-ih\,\zeta'(0))\, d\mu.
  \end{align*}
  By (H2)(iii), (H2)(iv) and Lemma~\ref{lemma-PQ},
  \[
   |I_1(t,h)-ih\int_{\Omega} R'(0) \zeta'(0)\, d\mu|\ll |h|\,(|t|^{\alpha'-1}+|h|^{\alpha'-1}),
  \]
   for any $\alpha'\in (1,\alpha)$.
 By (H2)(iv) (together with Lemma~\ref{lemma-PQ}) and Lemma~\ref{lemma:ht} (i),
 \[
  |I_4(t,h)-ih\int_{\Omega} R'(0) \zeta'(0)\, d\mu|\ll |h|(|t|^{\alpha'-1}+|h|^{\alpha'-1}),
  \]
  for any $\alpha'\in (1,\alpha)$.
Hence,
\[
 |I_1(t,h)+I_4(t,h)|\ll |h|+|h|(|t|^{\alpha'-1}+|h|^{\alpha'-1}),
\]
for any $\alpha'\in (1,\alpha)$, ending the proof. ~\end{pfof}

For use in the proofs to follow we recall one more property of $\lambda$,
when $\alpha\in(1,2]$; this is a refined version of the continuity properties
recorded in Lemma~\ref{lemma-PQ}.~(This type of estimate appears inside~\cite[Proof of Theorem 3.2]{MTejp}.)
 
 \begin{lemma}\label{cor-lambda2}
If $\alpha\in(1,2]$, then there exists $C>0$ so that for all $t,t+h\in B_{3\eps}(0)$ with $|h|\le \frac{|t|}{2}$,
$$
|\lambda(t+h)-\lambda(t)|\le C|h|\,(|t|^{\alpha-1}+|h|^{\alpha-1})\ell_0\left(\frac{1}{|t|+|h|}\right).
$$

A similar statement holds for $\Psi$.
\end{lemma}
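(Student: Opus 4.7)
The plan is to derive the estimate directly from the pointwise bound on $\lambda'$ recorded in~\eqref{eq:lam}, combined with the slow variation of $\ell_0$ applied to arguments of comparable magnitude. The same argument, using~\eqref{eq:psider} in place of~\eqref{eq:lam}, will give the statement for $\Psi$, so I only describe the case of $\lambda$.

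First I would note that for $\alpha\in(1,2]$ the function $\lambda=\Psi+V$ is continuously differentiable on $B_{3\eps}(0)$: combine Lemma~\ref{lemma-lambda}(ii) (which, via the modulus-of-continuity bound on $V'$, gives $V'$ continuous) with the differentiability of $\Psi$. By~\eqref{eq:lam} the derivative satisfies $|\lambda'(s)|\le C\,\ell_0(1/|s|)\,|s|^{\alpha-1}$ on $B_{3\eps}(0)\setminus\{0\}$. Since $|h|\le|t|/2$, the straight-line segment from $t$ to $t+h$ lies in $B_{3\eps}(0)$ and avoids $0$, so writing $\lambda(t+h)-\lambda(t)=h\int_0^1\lambda'(t+uh)\,du$ and taking absolute values reduces the task to bounding
$$
|\lambda(t+h)-\lambda(t)|\le C|h|\sup_{u\in[0,1]}\ell_0\Big(\tfrac{1}{|t+uh|}\Big)\,|t+uh|^{\alpha-1}.
$$

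The hypothesis $|h|\le|t|/2$ guarantees that $|t+uh|\in[|t|/2,\,3|t|/2]$ for every $u\in[0,1]$, so $|t+uh|$ is within a fixed multiplicative factor of $|t|+|h|$ (the ratio lying in $[1/3,3/2]$). Slow variation of $\ell_0$ (Potter bounds / the uniform convergence theorem) then yields $\ell_0(1/|t+uh|)\le C\,\ell_0(1/(|t|+|h|))$ uniformly in $u$, once $\eps$ is taken small enough. Combining with the elementary bound $|t+uh|^{\alpha-1}\le (3|t|/2)^{\alpha-1}\le C|t|^{\alpha-1}\le C(|t|^{\alpha-1}+|h|^{\alpha-1})$ gives the claimed inequality.

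There is no real obstacle: the only mildly delicate point is the slow-variation comparison $\ell_0(1/|t+uh|)\sim\ell_0(1/(|t|+|h|))$, which is a standard consequence of slow variation once one observes the multiplicative comparability of the arguments. The constraint $|h|\le|t|/2$ is precisely what enforces this comparability and simultaneously keeps the integrand away from the singularity of $\ell_0(1/|\cdot|)\cdot|\cdot|^{\alpha-1}$ at $s=0$; without it the integral representation would have to be split around the origin and the bound would degrade.
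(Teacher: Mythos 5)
Your proof is correct and follows essentially the same route as the paper, which simply invokes the mean value theorem together with the derivative bound $|\lambda'(t)|\le C\ell_0(1/|t|)|t|^{\alpha-1}$ from~\eqref{eq:lam}. Your use of the integral form $\lambda(t+h)-\lambda(t)=h\int_0^1\lambda'(t+uh)\,du$ is in fact slightly more careful than the paper's appeal to the MVT (since $\lambda$ is complex-valued), and you correctly spell out the slow-variation comparison that the paper leaves implicit in ``the conclusion follows.''
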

\begin{proof}
 By the mean value theorem, $\lambda(t+h)-\lambda(t)= h\lambda'(t^*)$,
 for some $t^*\in [t,t+h]$ and the conclusion follows. 
\end{proof}

\subsubsection{A first decomposition of $\int_{\Omega} R(t)^n\, d\mu$
  into $\Psi(t)^n$ and `good' quantities}

\begin{lemma}
\label{lemma:dec} Let 
\[
\hat V(t, n)=\int_{\Omega} R(t)^n 1\, d\mu-\Psi(t)^n -\int_{\Omega} Q(t)^n 1\, d\mu.
\]
The following hold for all $n\ge 1$ and 
all $|t|,|h|\in B_\eps(0)$ with $|h|\le |t|/2$.

\begin{itemize}
 \item[(i)] If $\alpha\in (0,1)$, then
   \begin{eqnarray*}
   |\hat V(t, n)|\!\!\!\! &\ll& \!\!\!\! \left(|t|^{\alpha}\ell\Big(\frac{1}{|t|}\Big)
   +\,n |t|^{2\alpha}\ell\Big(\frac{1}{|t|}\Big)^2\right)\,|\Psi(t)^n|
   \\
 |\hat V(t+h,n)-\hat V(t, n)|\!\!\!\! &\ll&\!\!\!\! \left(|h|^{\alpha}\ell\Big(\frac{1}{|h|}\Big)
 +\, n|h|^{\alpha}\ell\Big(\frac{1}{|h|}\Big)\,|t|^{\alpha}\,\ell\left(\frac{1}{|t|}\right)\right)\,|\Psi(t)^n|.
    \end{eqnarray*}  
    
  \item[(ii)]  If $\alpha\in (1,2]$, then for any $\alpha'\in(1,\alpha)$,
  \[
   \left|\hat V(t, n)-it\,\lambda(t)^n\int_{\Omega} P'(0)\, 1\, d\mu\right|\ll  \left(|t|^{\alpha'}+\,n |t|^2\right)\,|\Psi(t)^n|.
    \]
    
\item[(iii)] If $\alpha\in (1,2]$, then for any $\alpha'\in(1,\alpha)$,
  \begin{align*}
   &\left|\hat V(t+h,n)-\hat V(t, n)-ih\,\lambda(t)^n\int_{\Omega} P'(0)\, 1\, d\mu\right|\\
   &\ll \left(|h|\, |t|^{\alpha'-1}\,
   +  n\,|h|\, |t|^{\alpha-1}\,\ell_0\left(\frac{1}{|t|}\right)\right)\,|\Psi(t)^n|.
    \end{align*}
\item[(iv)] If $\alpha\in (1,2]$, then $\hat V$ is differentiable in $t$ and writing 
$\hat V'(t,n)$ for its derivative,
  \begin{align*}
     \left|\hat V'(t, n)-\lambda(t)^n\int_{\Omega} P'(0)\, 1\, d\mu\right|&\le \left( |t|^{\alpha-1}\ell_0\left(\frac{1}{|t|}\right)
     + n\,|t|
     +n^2\,|t|^{\alpha+1}\ell_0\left(\frac{1}{|t|}\right)\right)\,|\Psi(t)^n|
     \end{align*}
   and
 \begin{align*}
 \Big|\hat V'(t+h,n)- &\hat V'(t, n)-
 ih\,n\,\lambda(t)^{n-1}\lambda'(t) \int_{\Omega}P'(0))1 \, d\mu\Big|\ll
 |h|^{\alpha-1}\ell_0\left(\frac{1}{|h|}\right)\,|\Psi(t)^n|\\
 &+n\left(|t|\,|h|^{\alpha'-1}\ell_0\left(\frac{1}{|h|}\right)
 +|h|
 \right)|\Psi(t)^n|\\
 &+n^2 \left(|t|^{2\alpha-1}|h|\ell_0\left(\frac{1}{|t|}\right)^2+|t|^2|h|^{\alpha-1}\ell_0\left(\frac{1}{|h|}\right)+ |h|\, |t|^\alpha\ell_0\left(\frac{1}{|t|}\right)\right)\,|\Psi(t)^n|\\
 &+n^3 |t|^{2\alpha}|h|\ell_0\left(\frac{1}{|t|}\right)^2\,|\Psi(t)^n|.
\end{align*}
\end{itemize}
\end{lemma}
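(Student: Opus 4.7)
The starting point is the spectral decomposition~\eqref{eq-sp}: since $R(t)^n=\lambda(t)^n P(t)+Q(t)^n$ with $P(t)Q(t)=Q(t)P(t)=0$, integration yields
\[
\int_\Omega R(t)^n\,1\,d\mu = \lambda(t)^n\,\pi(t) + \int_\Omega Q(t)^n\,1\,d\mu,\qquad \pi(t):=\int_\Omega P(t)\,1\,d\mu,
\]
with $\pi(0)=1$ and (structural) derivative $\pi'(0)=\int_\Omega P'(0)\,1\,d\mu$. Hence the fundamental identity
\[
\hat V(t,n) = \lambda(t)^n\bigl(\pi(t)-1\bigr) + \bigl(\lambda(t)^n-\Psi(t)^n\bigr),
\]
which anchors all four assertions. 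A recurring tool is the elementary inequality $|\lambda(t)/\Psi(t)|^n\le\exp(2n|V(t)|)$, valid once $|\Psi(t)|\ge 1/2$; in the regime where $n|V(t)|$ remains bounded---which covers the range where the stated bounds are non-trivial---this yields the comparison $|\lambda(t)^n|\ll|\Psi(t)^n|$, so that every error can be measured against the common weight $|\Psi(t)^n|$; outside this regime Fact~\ref{fact:chfact1} guarantees that $|\Psi(t)^n|$ is exponentially small, absorbing any polynomial factors in $n$ on the right-hand side.

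For the pointwise statement (i) with $\alpha\in(0,1)$, I would substitute $|V(t)|\ll|t|^{2\alpha}\ell(1/|t|)^2$ from Lemma~\ref{lemma-lambda}(i) and $|\pi(t)-1|\ll|t|^\alpha\ell(1/|t|)$, inherited from (H2)(ii) via Lemma~\ref{lemma-PQ}, into the fundamental identity; the second summand is handled by the telescoping
\[
\lambda(t)^n-\Psi(t)^n = V(t)\sum_{k=0}^{n-1}\lambda(t)^k\,\Psi(t)^{n-1-k}.
\]
For (ii) with $\alpha\in(1,2]$, Lemma~\ref{lemma:ht}(i) applied at $(t,h)=(0,t)$ supplies the linearisation $\pi(t)-1=it\pi'(0)+O(|t|^{\alpha'})$, so the leading term $it\lambda(t)^n\int P'(0)\,1\,d\mu$ is extracted from $\lambda(t)^n(\pi(t)-1)$; the remaining contribution $\lambda(t)^n-\Psi(t)^n$ is again dealt with by telescoping together with $|V(t)|\ll|t|^2$.

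For the finite-difference assertion (iii), decompose
\[
\hat V(t+h,n)-\hat V(t,n) = \lambda(t)^n\bigl(\pi(t+h)-\pi(t)\bigr) + \bigl(\lambda(t+h)^n-\lambda(t)^n\bigr)\bigl(\pi(t+h)-1\bigr) + \bigl(F(t+h)-F(t)\bigr),
\]
where $F(t):=\lambda(t)^n-\Psi(t)^n$. The first summand supplies the leading $ih\lambda(t)^n\int P'(0)\,1\,d\mu$ after Lemma~\ref{lemma:ht}(i) linearises $\pi(t+h)-\pi(t)$; the second is bounded by Lemma~\ref{cor-lambda2} together with $|\pi(t+h)-1|\ll|t|$; the third is written as $F(t+h)-F(t) = (V(t+h)-V(t))\sum_{k=0}^{n-1}\lambda(t+h)^k\Psi(t+h)^{n-1-k} + V(t)\sum_{k=0}^{n-1}\bigl(\lambda(t+h)^k\Psi(t+h)^{n-1-k}-\lambda(t)^k\Psi(t)^{n-1-k}\bigr)$ and controlled via Lemma~\ref{lemma-lambda}(i) and Lemma~\ref{cor-lambda2}.

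For (iv), structural differentiation of the fundamental identity yields
\[
\hat V'(t,n) = \lambda(t)^n\pi'(t) + n\lambda(t)^{n-1}\lambda'(t)\bigl(\pi(t)-1\bigr) + n\bigl(\lambda(t)^{n-1}\lambda'(t)-\Psi(t)^{n-1}\Psi'(t)\bigr).
\]
The leading $\lambda(t)^n\int P'(0)\,1\,d\mu$ comes from $\lambda(t)^n\pi'(t)$ via $|\pi'(t)-\pi'(0)|\ll|t|^{\alpha-1}\ell_0(1/|t|)$, inherited through Lemma~\ref{lemma-PQ} from (H2)(iv); the remaining two summands produce the $n|t|$ and $n^2|t|^{\alpha+1}\ell_0(1/|t|)$ contributions after invoking Lemma~\ref{lemma-lambda}(ii), estimate~\eqref{eq:psider}, and one telescoping on $\lambda^{n-1}-\Psi^{n-1}$. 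The main obstacle is the modulus-of-continuity estimate on $\hat V'$: each of the four summands of $\hat V'(t+h,n)-\hat V'(t,n)$ must be expanded to first order in $h$, with further telescopings on products such as $\lambda(t+h)^{n-1}\lambda'(t+h)-\lambda(t)^{n-1}\lambda'(t)$ and its $\Psi$-analogue; these combine to produce factors $n$, $n(n-1)$, and $n(n-1)(n-2)$, which account for the $n$, $n^2$ and $n^3$ contributions in the stated bound. Every increment that arises is controlled by one of Lemmas~\ref{lemma-lambda}(ii), \ref{lemma-PQ}, \ref{lemma:ht} and~\ref{cor-lambda2}, reducing the proof to a tedious but routine bookkeeping.
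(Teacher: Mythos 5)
Your proposal matches the paper's proof essentially line for line. Writing $\pi(t)=\int_\Omega P(t)\,1\,d\mu$, the paper's starting decomposition \eqref{eq;hatv}, $\hat V(t,n)=A(t,n)+B(t,n)$ with $A=\lambda^n-\Psi^n$ and $B=\lambda^n\int_\Omega(P(t)-P(0))1\,d\mu$, is exactly your fundamental identity $\hat V=(\lambda^n-\Psi^n)+\lambda^n(\pi-1)$. Items (i)--(iii) then go through the same substitutions (Lemma~\ref{lemma-lambda} for $V$, Lemma~\ref{lemma-PQ} with (H2)(ii) for $\pi-1$, Lemma~\ref{lemma:ht}(i) for the linearisation, Lemma~\ref{cor-lambda2} for $\lambda^n(t+h)-\lambda^n(t)$, and the telescoping identity $\lambda^n-\Psi^n=V\sum_k\lambda^k\Psi^{n-1-k}$), and your differentiated decomposition of $\hat V'(t,n)$ into three summands is exactly the paper's $A_0+A_1+A_2$ in \eqref{eq;hatv222}, with the origin of the $n$, $n^2$, $n^3$ factors identified correctly as nested telescopings. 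One small caveat worth flagging: your justification of $|\lambda(t)^n|\ll|\Psi(t)^n|$ via $|\lambda/\Psi|^n\le\exp(2n|V|)$ is not fully satisfying (since $n|V(t)|$ need not be bounded on the relevant domain), but the paper's own appeal to Lemma~\ref{lemma-lambda} here is equally terse; the comparison is really about exponential decay rates both of order $\exp(-cn|t|^\alpha\ell_0(1/|t|))$, and it is implicitly understood that the constant $c$ in the weight may be slightly degraded. This shared gloss does not constitute a gap in your argument relative to the paper.
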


\begin{proof}
By~\eqref{eq-sp},
\begin{align*}
 \int_{\Omega} R(t)^n 1\, d\mu &-\int_{\Omega} Q(t)^n 1\, d\mu =\lambda(t)^n\int_{\Omega} P(t) 1\, d\mu\\
 &=\lambda(t)^n\int_{\Omega} P(0) 1\, d\mu
 +\lambda(t)^n\int_{\Omega} (P(t)-P(0)) 1\, d\mu\\
 &=\Psi(t)^n +(\lambda(t)^n-\Psi(t)^n)
 +\lambda(t)^n\int_{\Omega} (P(t)-P(0)) 1\, d\mu.
\end{align*}
Hence,
\begin{align}\label{eq;hatv}
 \nonumber \hat V(t, n)&=(\lambda(t)^n-\Psi(t)^n)
 +\lambda(t)^n\int_{\Omega} (P(t)-P(0))1\, d\mu\\
 &= A(t, n)+B(t, n).
\end{align}
Throughout the rest of the proof we will use that by Lemma~\ref{lemma-lambda},
$|\lambda(t)^n|\ll |\Psi(t)^n|$.

\textbf{Item (i).} Note that by Lemma~\ref{lemma-PQ}
and (H2)(ii), $|B(t, n)|\ll |t|^{\alpha}\ell\Big(\frac{1}{|t|}\Big)|\Psi(t)^n|$.
Next, using Lemma~\ref{lemma-PQ} and  (H2)(ii),
\[
 |B(t+h,n)-B(t, n)|\le |h|^{\alpha}\ell\Big(\frac{1}{|h|}\Big)|\Psi(t)^n|
 +|\lambda(t+h)^n-\lambda(t)^n||t|^{\alpha}\ell\Big(\frac{1}{|t|}\Big).
\]
Note that $\lambda(t+h)^n-\lambda(t)^n=(\lambda(t+h)-\lambda(t))\sum_{j=0}^{n-1}\lambda(t+h)^j\lambda(t)^{n-j}$.  This together with Lemma~\ref{lemma-PQ}
implies that
\(
 |\lambda(t+h)^n-\lambda(t)^n|\ll n |h|^{\alpha}\ell\Big(\frac{1}{|h|}\Big)|\Psi(t)^n|.
\)
Hence, 
\[
 |B(t+h,n)-B(t, n)|\le |h|^{\alpha}\ell\Big(\frac{1}{|h|}\Big)(n |t|^{\alpha}\ell\Big(\frac{1}{|t|}\Big) +1)\,|\Psi(t)^n|.
\]
It remains to deal with $A(t, n)$ in~\eqref{eq;hatv}.
Given $V(t) = \lambda(t) - \Psi(t)$ as in Lemma~\ref{lemma-lambda}, $A(t, n)=V(t)\sum_{j=0}^{n-1}\lambda(t)^j\Psi(t)^{n-j-1}$. This together with the properties of $V$ in
Lemma~\ref{lemma-lambda} (i) implies that
\(
 |A(t, n)|\ll n |t|^{2\alpha}\ell\Big(\frac{1}{|t|}\Big)^2|\Psi(t)^n|.
\)
To deal with the continuity properties of $A(t,n)$, we first compute that
\begin{align}\label{eq:lpsth}
 &(\lambda(t+h)^{n}-\Psi(t+h)^{n})-(\lambda(t)^{n}-\Psi(t)^{n})\\
 \nonumber &=V(t+h)\sum_{j=0}^{n-1}\lambda(t+h)^{j}\Psi(t+h)^{n-j-1}
 -V(t)\sum_{j=0}^{n-1}\lambda(t)^{j}\Psi(t)^{n-j-1}\\
 \nonumber &=(V(t+h)-V(t))\sum_{j=0}^{n-1}\lambda(t+h)^{j}\Psi(t+h)^{n-j-1}\\
 \nonumber &+V(t)\sum_{j=0}^{n-1}\left(\lambda(t+h)^{j}-\lambda(t)^{j}\right)\Psi(t+h)^{n-j-1}
 +V(t)\sum_{j=0}^{n-1}\lambda(t)^{j}\left(\Psi(t+h)^{n-j-1}-\Psi(t)^{n-j-1}\right)
 \end{align}
 Using~\eqref{eq:lpsth}, we compute that
\begin{align*}|A(t+h ,n)-A(t, n)|&\ll I_1+I_2+I_3,
\end{align*}
where
\begin{align*}
 I_1&=|V(t+h)-V(t)|\sum_{j=0}^{n-1}|\lambda(t+h)^j|\, |\Psi(t+h)^{n-j-1}|
 \ll n|h|^{\alpha}\ell\Big(\frac{1}{|h|}\Big)\,|t|^{\alpha}\,\ell\Big(\frac{1}{|t|}\Big)|\Psi(t+h)^n|,
\end{align*}
\[
 I_2=|V(t)|\sum_{j=0}^{n-1}|\lambda(t+h)^j-\lambda(t)^j|\,|\Psi(t+h)^{n-j-1}|\ll 
 n|h|^{\alpha}\ell\Big(\frac{1}{|h|}\Big)\,|t|^{2\alpha}\,\ell\Big(\frac{1}{|t|}\Big)^2|\Psi(t+h)^n|,
\]
where we have used again Lemma~\ref{lemma-PQ} and Lemma~\ref{lemma-lambda},
and
\[
 I_3=|V(t)|\sum_{j=0}^{n-1}|\lambda(t)^j|\,|\Psi(t+h)^{n-j-1}-\Psi(t)^{n-j-1}|
 \ll n|h|^{\alpha}\ell\Big(\frac{1}{|h|}\Big)\,|t|^{2\alpha}\,\ell\Big(\frac{1}{|t|}\Big)^2|\Psi(t)^n|,
\]
where we have used~\eqref{eq:psicont} as well.

\textbf{Item (ii).}
We continue from~\eqref{eq;hatv}. First,
$|A(t,n)|\ll n|V(t)||\Psi(t)^n|$ and using
Lemma~\ref{lemma-lambda} (i) (the statement on $V$ for the case $\alpha\in (1,2]$),
$|A(t,n)|\ll n|t|^2|\Psi(t)^n|$.

Next, we rewrite $B(t,n)$ in a convenient way:
\begin{align*}
\label{eq:b1}
 \nonumber B(t,n)=it\lambda(t)^n\int_{\Omega} P'(0)1\, d\mu
 +\lambda(t)^n\int_{\Omega} (P(t)-P(0)-itP'(0))1\, d\mu.
 \end{align*}
 By Lemma~\ref{lemma:ht}(i), $\left|\lambda(t)^n\int_{\Omega} (P(t)-P(0)-itP'(0))1\, d\mu\right|
 \ll |t|^{\alpha'}|\Psi(t)^n|$ for any $\alpha'\in (1,\alpha)$, and the conclusion follows.

\textbf{Item (iii).}
We start from~\eqref{eq;hatv}. First,
\begin{align*}
|A(t+h,n)-A(t,n)|&\ll|\lambda(t+h)^n-\lambda(t)^n|
+|\Psi(t+h)^n-\Psi(t)^n|\\
&\ll n |\lambda(t+h)-\lambda(t)|\,|\lambda(t)^n|
+n |\Psi(t+h)-\Psi(t)|\,|\Psi(t)^n|\\
&\ll n\,|h|\,|t|^{\alpha-1}\, \ell_0\left(\frac{1}{|t|}\right)\,|\Psi(t)^n|,
\end{align*}
where we have used Lemma~\ref{cor-lambda2}.

It remains to deal with $B$ defined in~\eqref{eq;hatv}.
\begin{align*}
 B(t+h, n)- B(t, n)&=\lambda(t)^n\int_{\Omega} (P(t+h)-P(t))1\, d\mu
 +(\lambda(t+h)^n-\lambda(t)^n)\int_{\Omega} (P(t+h)-P(0))1\, d\mu\\
 &=J_1(t, h, n)+J_2(t, h, n).
\end{align*}
First, by Lemma~\ref{lemma-PQ} and
Lemma~\ref{cor-lambda2},
\[
 |J_2(t, h, n)|\ll n|\lambda(t+h)-\lambda(t)|\,|\lambda(t)^n|\,|t+h|\ll n\,
 |h|\,|t|^{\alpha}\,  \ell_0\left(\frac{1}{|t|}\right)|\Psi(t)^n|.
\]
 Now 
\begin{align*}
J_1(t, h, n)=ih\,\lambda(t)^n\int_{\Omega} P'(0)\,1\, d\mu + 
\lambda(t)^n\int_{\Omega} (P(t+h)-P(t)-ihP'(0))1\, d\mu.         
\end{align*}
Using Lemma~\ref{lemma:ht}(i), we obtain that for any $\alpha'\in (1,\alpha)$,
\begin{align*}
\left|J_1(t, h, n)-ih\, \lambda(t)^n\int_{\Omega} P'(0)\,1\, d\mu\right|\ll 
|h|\, (|t|^{\alpha'-1} + |h|^{\alpha'-1})\,|\Psi(t)^n|,
\end{align*}
ending the proof of (iii).

\textbf{Item (iv).} Differentiating in~\eqref{eq;hatv},
\begin{align}\label{eq;hatv222}
 \nonumber \hat V'(t, n)&=n(\lambda(t)^{n-1}\lambda'(t)-\Psi(t)^{n-1}\Psi'(t))
 +n\lambda(t)^{n-1}\lambda'(t)\int_{\Omega} (P(t)-P(0))1\, d\mu\\
 &\quad +\lambda(t)^n\int_{\Omega} P'(t) 1\, d\mu
=A_0(t, n)+A_1(t, n)+A_2(t, n).
\end{align}
Recalling from Lemma~\ref{lemma-lambda} that $V(t) = \lambda(t) - \Psi(t)$
we compute that
\begin{align}\label{eq:a0}
 \nonumber A_0(t, n)&=n\lambda'(t)(\lambda(t)^{n-1}-\Psi(t)^{n-1})
 +n\Psi(t)^{n-1}(\lambda'(t)-\Psi'(t))\\
 &=n\lambda'(t)(\lambda(t)^{n-1}-\Psi(t)^{n-1})+n\Psi(t)^{n-1}V'(t).
\end{align}
It follows from equation~\eqref{eq:lam} and Lemma~\ref{lemma-lambda}(i) and (ii) that
\begin{align*}
 |A_0(t,n)&\ll n\,|t|^{\alpha-1}\ell_0\left(\frac{1}{|t|}\right)\,n\, |V(t)|\,|\Psi(t)^n|
 +n\, |V'(t)|\,|\Psi(t)^n|\\
 &\ll n^2\,|t|^{\alpha+1}\ell_0\left(\frac{1}{|t|}\right)\,|\Psi(t)^n|
 +n |t|\,|\Psi(t)^n|.
\end{align*}

Regarding $A_1$, using equation~\eqref{eq:lam} (statement on the derivative) and
Lemma~\ref{lemma-PQ}, we obtain $|A_1(t, n)|\ll n|t|^{\alpha}\ell_0\left(\frac{1}{|t|}\right)\,|\Psi(t)^n|$.
For $A_2$, write
\begin{align}\label{eq:a2122}
  \nonumber A_2(t, n)&=\lambda(t)^n\int_{\Omega} P'(0) 1\, d\mu+\lambda(t)^n\int_{\Omega} (P'(t)-P'(0)) 1\, d\mu\\
  &=\lambda(t)^n\int_{\Omega} P'(0) 1\, d\mu+A_2^1(t, n).
\end{align}
By
Lemma~\ref{lemma-PQ} and (H2)(iv),
$|A_2^1(t, n)|\ll |t|^{\alpha-1}\ell_0\left(\frac{1}{|t|}\right)\,|\Psi(t)^n|$.
Thus,
\[
 |A_2(t, n)-\lambda(t)^n\int_{\Omega} P'(0) 1\, d\mu|\ll |t|^{\alpha-1}\ell_0\left(\frac{1}{|t|}\right)\,|\Psi(t)^n|.
\]
Putting together the estimates for $A_0, A_1, A_2$,
\begin{align*}
 \left|\hat V'(t, n)-\lambda(t)^n\int_{\Omega} P'(0) 1\, d\mu\right|&
 \ll \left(n^2\,|t|^{\alpha+1}\ell_0\left(\frac{1}{|t|}\right)+n |t|\right)\,|\Psi(t)^n|
 +n\,|t|^{\alpha}\ell_0\left(\frac{1}{|t|}\right) \,|\Psi(t)^n|\\
 &+|t|^{\alpha-1}\ell_0\left(\frac{1}{|t|}\right)\,|\Psi(t)^n|\\
 &\ll \left(|t|^{\alpha-1}\ell_0\left(\frac{1}{|t|}\right)+n |t|
 +n^2\,|t|^{\alpha+1}\ell_0\left(\frac{1}{|t|}\right)\right)\,|\Psi(t)^n|,
\end{align*}
as claimed.

We continue with the continuity properties of $\hat V'$ studying each term in~\eqref{eq;hatv}.

\textbf{Term $A_0$ in~\eqref{eq;hatv222}.}

Using the expression~\eqref{eq:a0}, we compute that

\begin{align*}
 |A_0(t+h,n)-A_0(t, n)|&\ll n^2|\lambda'(t+h)-\lambda'(t)|\,|\Psi(t)^{n-1}|\,|V(t)|\\
 & + n|\lambda'(t)|\Big|(\lambda(t+h)^{n-1}-\Psi(t+h)^{n-1})-
  (\lambda(t)^{n-1}-\Psi(t)^{n-1})\Big|\\
  &+n\,|\Psi(t+h)^{n-1}-\Psi(t)^{n-1}|\,|V'(t)|+n\,|\Psi(t)^{n-1}|\,|V'(t+h)-V'(t)|\\
 &=I_1(t, h, n)+I_2(t, h, n)+I_3( t, h, n)+I_4(t, h, n).
\end{align*}

By Lemma~\ref{lemma-lambda} (ii),
$I_4( t, h, n)\ll n\,|h|^{\alpha'-1}\ell_0\left(\frac{1}{|h|}\right)(|t|+|h|)\, |\Psi(t)^{n-1}|+n\, |h|\,|\Psi(t)^{n-1}|$.

By Lemma~\ref{cor-lambda2} (statement on $\Psi$), Lemma~\ref{lemma-lambda} (ii) and~\eqref{eq:psider},
$I_3(t, h, n)\ll 
n^2\,|h|\,|t|\,\ell_0\left(\frac{1}{|t|+|h|}\right)(|t|^{\alpha-1}+|h|^{\alpha-1})\,|\Psi(t)^n|
\ll n^2\,|h|\,|t|\,\ell_0\left(\frac{1}{|t|+|h|}\right)\,|t|^{\alpha-1}\,|\Psi(t)^n|$.

We continue with  $I_2$. First, recalling equation~\eqref{eq:psider}
and using Lemma~\ref{lemma-lambda} and Lemma~\ref{cor-lambda2},

 \begin{align}\label{eq;estnasty}
  \nonumber \Big|&(\lambda(t+h)^{n}-\Psi(t+h)^{n})-
  (\lambda(t)^{n}-\Psi(t)^{n})\Big|\\
  \nonumber &\ll n|V(t+h)-V(t)|\,|\Psi(t)^{n-1}|
  + n^2|V(t)|\,|\left|\lambda(t+h)-\lambda(t)\right|\,|\Psi(t)^{n-1}|\\
 \nonumber & \quad + n^2|V(t)|\,|\left|\Psi(t+h)-\Psi(t)\right|\,|\Psi(t)^{n-1}|\\
  &\ll n|h|(|t|+|h|)\,|\Psi(t)^{n-1}|+n^2|h|\,|t|^{2}\, (|t|^{\alpha-1}+|h|^{\alpha-1}) \, \ell_0\left(\frac{1}{|t|+|h|}\right)\,|\Psi(t)^{n-1}|.
 \end{align}
By equations~\eqref{eq;estnasty} and~\eqref{eq:lam},
\begin{align*}
 I_2(t, h, n)&\ll n^2|h|\,|t|^{\alpha} \, \ell_0\left(\frac{1}{|t|+|h|}\right)\,|\Psi(t)^{n-1}|
 +n^3|h|\,\,|t|^{2\alpha}\,\ell_0\left(\frac{1}{|t|+|h|}\right)^2\,|\Psi(t)^{n-1}|.
\end{align*}
Finally, by Lemma~\ref{lemma-PQ} and (H2)(iv), $|\lambda'(t+h)-\lambda'(t)|\ll h^{\alpha-1}\ell_0(1/|h|)$.
This together with Lemma~\ref{lemma-lambda} (i) gives that
$I_1(t, h, n)\ll n^2\,|h|^{\alpha-1}|t|^2\ell_0\left(\frac{1}{|t|+|h|}\right)\,|\Psi(t)^n|$.
Putting together the estimates for $I_4, I_3, I_2, I_1$, we obtain
\begin{align}
 \label{eq:a0cont}
 \nonumber |A_0(t+h,n)&-A_0(t, n)|\ll \left(n\,|h|^{\alpha-1}\ell_0\left(\frac{1}{|h|}\right)(|t|+|h|)+n\, |h|\right)\,|\Psi(t)^{n-1}|\\
 \nonumber &+\left(n^2 |h|^{\alpha-1}\ell_0\left(\frac{1}{|h|}\right)
 \,|t|^2\
 +\ n^2\, |h|\,|t|^{\alpha}\,\ell_0\left(\frac{1}{|t|+|h|}\right)\right)\,|\Psi(t)^{n-1}|\\
 &+n^3|h|\,
 |t|^{2\alpha}\, \ell_0\left(\frac{1}{|t|+|h|}\right)^2\,|\Psi(t)^{n-1}|.
 \end{align}

\textbf{Term $A_1$ in~\eqref{eq;hatv222}.}
\begin{align*}
 A_1(t+h,n)&-A_1(t, n)=n\lambda(t)^{n-1}\lambda'(t)\int_{\Omega} (P(t+h)-P(t))1\, d\mu\\
 &+n(\lambda(t+h)^{n-1}-\lambda(t)^{n-1})\lambda'(t)\int_{\Omega} (P(t+h)-P(0))1\, d\mu\\
 &+n\lambda(t+h)^{n-1}(\lambda'(t+h)-\lambda'(t))\int_{\Omega} (P(t+h)-P(0))1\, d\mu\\
 &=J_1(t,h, n)+J_2(t,h, n)+J_3(t,h, n).
\end{align*}
By Lemma~\ref{lemma-PQ} and arguments similar to the ones used in estimating $A_0$ above,
$|J_2(t,h, n)|\ll n^2\,|h|\,|t|^{2\alpha-1}\,\ell_0\left(\frac{1}{|t|+|h|}\right)^2\,|\Psi(t)^n|$
and $|J_3(t,h, n)|\ll n\, |h|^{\alpha-1}\ell_0\left(\frac{1}{|h|}\right)\, |t|\,|\Psi(t)^n|$.

Finally, for any $\alpha'\in (1,\alpha)$,
\begin{align*}
 J_1(t,h, n)&=ih\,n\,\lambda(t)^{n-1}\lambda'(t) \int_{\Omega}P'(0)1\, d\mu+n\lambda(t)^{n-1}\lambda'(t)\int_{\Omega} (P(t+h)-P(t)-ih\, P'(0))1\, d\mu\\
 &=ih\,n\,\lambda(t)^{n-1}\lambda'(t) \int_{\Omega}P'(0)1\, d\mu
 +O\left(n\, |h|\,|t|^{\alpha'+\alpha-2}\,|\Psi(t)^n|\right),
\end{align*}
where in the last line we have used equation~\eqref{eq:lam} and Lemma~\ref{lemma:ht} (i). 

Altogether,
\begin{align}\label{eq:a1contpr}
 \nonumber \Big| & A_1(t+h,n)-A_1(t, n)-ih\,n\,\lambda(t)^{n-1}\lambda'(t) \int_{\Omega}P'(0))1\Big|\\
 &\ll \left(n\, |h|\,|t|^{\alpha'+\alpha-2}+n^2\,|h|\,|t|^{2\alpha-1}\,\ell_0\left(\frac{1}{|t|+|h|}\right)^2
 +n\, |h|^{\alpha-1}\ell_0\left(\frac{1}{|h|}\right)\,|t|\right)|\Psi(t)^n|.
\end{align}

\textbf{Term $A_2$ in~\eqref{eq;hatv222}.}
Here we use the definition of $A_2^1$ in~\eqref{eq:a2122}.
By Lemma~\ref{lemma-PQ} and Lemma~\ref{cor-lambda2},
\begin{align*}
 |A_2^1(t+h,n)-A_2^1(t, n)|&\ll |\lambda(t+h)^n-\lambda(t)^n|\,\|P'(t)\|
+|\lambda(t)^n|\, \|P'(t+h)-P'(t)\|\\
&\ll  n\,|h|\, |t|^{\alpha-1}\ell_0\left(\frac{1}{|t|+|h|}\right) \,|\Psi(t)^n|+|h|^{\alpha-1}\ell_0\left(\frac{1}{|h|}\right)\,|\Psi(t)^n|.
\end{align*}

The second statement of item (iv) follows by putting together the estimate on $A_2^1$ 
with the equations~\eqref{eq:a0cont} and~\eqref{eq:a1contpr}.
More precisely, putting all these together (recorded in the order $A_0, A_1$ and $A_2$)
and recalling~\eqref{eq:a1contpr},
\begin{align*}
 \Big|\hat V'(t+h,n)- &\hat V'(t, n)-
 ih\,n\,\lambda(t)^{n-1}\lambda'(t) \int_{\Omega}P'(0))1 \, d\mu\Big|\ll
 |h|^{\alpha-1}\ell_0\left(\frac{1}{|h|}\right)\,|\Psi(t)^n|\\
 &+n\left(|t|\,|h|^{\alpha'-1}\ell_0\left(\frac{1}{|h|}\right)
 +|h|+
 |t|^{\alpha'+\alpha-2}|h|\ell_0\left(\frac{1}{|t|}\right)\right)|\Psi(t)^n|\\
 &+n^2 \left(|t|^{2\alpha-1}|h|\ell_0\left(\frac{1}{|t|+|h|}\right)^2+|t|^2|h|^{\alpha-1}\ell_0\left(\frac{1}{|h|}\right)+ |h|\, |t|^\alpha\ell_0\left(\frac{1}{|t|+|h|}\right)\right)\,|\Psi(t)^n|\\
 &+n^3 |t|^{2\alpha}|h|\ell_0\left(\frac{1}{|t|+|h|}\right)^2\,|\Psi(t)^n|.
\end{align*}

Using that $|h|\le |t|/2$, we can ignore the term $|t|^{\alpha'+\alpha-2}|h|\ell_0\left(\frac{1}{|t|}\right)$.
More precisely, we see that for any $\eps>0$,
\begin{align*}
 |t|^{\alpha'+\alpha-2}|h|\ell_0\left(\frac{1}{|t|}\right)
 =|t|\, |t|^{\alpha'+\alpha-3}\, |h|^{2-\alpha-\eps}\ell_0\left(\frac{1}{|t|}\right)\, |h|^{\alpha-1+\eps}
 \ll |t|\, |t|^{\alpha'-1-\eps}\ell_0\left(\frac{1}{|t|}\right)|h|^{\alpha-1+\eps}
\end{align*}
Since $\alpha'>1$ and $\eps>0$ is arbitrarily small,  $|t|^{\alpha'-1-\eps}\ell_0\left(\frac{1}{|t|}\right)=o(1)$ as $|t|\to 0$.
Also, $|h|^{\alpha-1+\eps}\ll |h|^{\alpha-1}\ell_0\left(\frac{1}{|h|}\right)$. Thus,
\[
 |t|^{\alpha'+\alpha-2}|h|\ell_0\left(\frac{1}{|t|}\right)
 \ll |t|\,|h|^{\alpha-1}\ell_0\left(\frac{1}{|h|}\right)\ll |t|\,|h|^{\alpha'-1}\ell_0\left(\frac{1}{|h|}\right).
\]
Item \textbf{(iv)} follows.~\end{proof}

\subsubsection{Properties of $Q(t)^n$}

\begin{lemma}
 \label{lemma-Qal} For $n \ge 1$, consider the operator $W(t,n)=Q(t)^n-Q(0)^n$.
 Let $\delta_0$ be as in~\eqref{eq-sp}. Let $|t|,|h|\in B_\eps(0)$.
 The following hold for some $\delta_0 < \delta_1 < \delta_2 < 1$ and for some $C',C''>0$.
 \begin{itemize}
  \item[(i)] If $\alpha \in (0,1)$, then
  \begin{equation*}
   \|W(t, n)\|\le C' \delta_1^n |t|^{\alpha}\ell\Big(\frac{1}{|t|}\Big),\,\,
  \|W(t+h, n)-W(t, n)\|\le C''\delta_1^n\,|h|^{\alpha}\ell\Big(\frac{1}{|h|}\Big).
    \end{equation*}
    
   \item[(ii)]  If $\alpha\in (1,2]$, then for any $\alpha'\in (1,\alpha)$.
   \begin{align*}
  \left\|\left(W(t+h, n)-W(t, n)-ih\,Q(0)^{n-1}\, Q'(0)\right)\,1\right\|\le C'\delta_2^n\,|h|\,
  (|t|^{\alpha'-1}+|h|^{\alpha'-1})+C''\,n |h|(|t|+|h|).
     \end{align*}
    
    \item[(iii)] If $\alpha\in (1,2]$, then $W$ is differentiable and noticing $W'(t,n)=(Q(t)^n)'$,
  \begin{align*}
   \|W'(t+h, n)-W'(t, n)\|=\|(Q(t+h)^n)'-(Q(t)^n)'\|\le C''\delta_1^n\,|h|^{\alpha-1}\ell_0\left(\frac{1}{|h|}\right).
    \end{align*}
\end{itemize}
\end{lemma}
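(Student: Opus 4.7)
The proof rests on the telescoping identity
\[
Q(t+h)^n-Q(t)^n=\sum_{j=0}^{n-1}Q(t+h)^{n-1-j}\bigl(Q(t+h)-Q(t)\bigr)Q(t)^{j},
\]
combined with the uniform spectral bound $\|Q(t)^k\|\le C\delta_1^k$ for all $|t|\in B_\eps(0)$, which follows from~\eqref{eq-sp} together with continuity of the spectrum of $R(t)$ and Lemma~\ref{lemma-PQ}. Throughout, $\delta_1,\delta_2\in(\delta_0,1)$ denote generic constants chosen large enough that polynomial factors $n^k\delta_1^n$ are absorbed into $C\delta_2^n$. Item (i) then follows immediately: $\|Q(t)-Q(0)\|\le C|t|^\alpha\ell(1/|t|)$ is inherited from (H2)(ii) via Lemma~\ref{lemma-PQ}, so the telescope contributes $\sum_{j=0}^{n-1}\delta_1^{n-1}=n\delta_1^{n-1}\le C\delta_2^n$, and the continuity bound is identical with increment $h$ in place of $t$. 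Note that the same argument applied with $\|Q(t+h)-Q(t)\|\ll|h|$ from~\eqref{eq:H2iiisimplified} yields an auxiliary bound $\|W(s,m)\|\le C\delta_2^m|s|$ for $\alpha\in(1,2]$, which will be used in item (ii).

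For item (iii), differentiating the telescope gives $W'(t,n)=\sum_{j=0}^{n-1}Q(t)^jQ'(t)Q(t)^{n-1-j}$. Subtracting the same expression at $t+h$ splits into three pieces in each of which exactly one of the three factors is replaced by its counterpart at $t$. Using $\|Q(t+h)-Q(t)\|\ll|h|$ from~\eqref{eq:H2iiisimplified} and $\|Q'(t+h)-Q'(t)\|\ll|h|^{\alpha-1}\ell_0(1/|h|)$ from (H2)(iv) (inherited by $Q$ via Lemma~\ref{lemma-PQ}), each piece is a geometric sum yielding $\le n^2\delta_1^{n-2}|h|$ or $\le n\delta_1^{n-1}|h|^{\alpha-1}\ell_0(1/|h|)$; absorbing polynomial factors in $n$ and using $\alpha-1<1$, the Hölder term dominates for small $|h|$ and gives the claim.

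Item (ii) is the delicate one: the claim isolates the single operator $ih\,Q(0)^{n-1}Q'(0)$, not a sum of $n$ such terms. The key structural fact is $Q(0)\,1=R(0)(I-P(0))\,1=0$ (since $P(0)\,1=1$), which collapses the telescope
\[
Q(t)^j\,1=\sum_{k=0}^{j-1}Q(t)^{k}(Q(t)-Q(0))Q(0)^{j-1-k}\,1=Q(t)^{j-1}(Q(t)-Q(0))\,1,
\]
so that $\|Q(t)^j\,1\|\le C\delta_1^j|t|$ for $\alpha\in(1,2]$. Now apply the main telescoping identity to $1$. For $j\ge 1$, combining the above with $\|Q(t+h)-Q(t)\|\ll|h|$ bounds the contribution by $\sum_{j=1}^{n-1}\delta_1^{n-1-j}|h|\,\delta_1^j|t|\ll|h||t|$, absorbed into the $C''n|h|(|t|+|h|)$ term of the claim. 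For $j=0$ the surviving term is $Q(t+h)^{n-1}(Q(t+h)-Q(t))\,1$; substituting the refined expansion $Q(t+h)-Q(t)=ih\,Q'(0)+E(t,h)$ from Lemma~\ref{lemma:ht}(ii), the error $\|E(t,h)\|\ll|h|(|t|^{\alpha'-1}+|h|^{\alpha'-1})$ contributes precisely the first claimed term (after multiplying by $\|Q(t+h)^{n-1}\|\le C\delta_1^{n-1}$). Writing $Q(t+h)^{n-1}Q'(0)\,1=Q(0)^{n-1}Q'(0)\,1+W(t+h,n-1)Q'(0)\,1$ and using the auxiliary item (i) analogue from the first paragraph gives $\|W(t+h,n-1)Q'(0)\,1\|\ll\delta_2^n(|t|+|h|)$, absorbable into $C''n|h|(|t|+|h|)$ after multiplying by $|h|$. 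The principal obstacle throughout is exactly this collapse step: without the identity $Q(0)\,1=0$, the telescope would produce $n$ non-cancelling first-order terms instead of just one, and the single-term form of the bound could not hold.
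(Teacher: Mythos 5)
Your proofs of items (i) and (iii) follow the same standard telescoping approach as the paper (the paper simply cites \cite[Proof of Lemma 3.7]{MTejp} for these). For item (ii), however, your argument is genuinely different and in fact considerably cleaner. The paper proves an operator-level estimate~\eqref{eq:claaaim} by a two-step induction: first on $1 \leq n \leq M$ (which produces a problematic constant $C^n$), then over blocks of $M$ iterates via $\hat Q = Q^M$ so that $C$ is replaced by $\tfrac12$ and the constant becomes uniform; the key fact $Q(0)\,1=0$ is invoked only at the very end, to annihilate the extra operator $(Q(0)Q'(0)+Q'(0)Q(0))\sum_j Q(0)^j$ carried through the induction. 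You instead use $Q(0)\,1 = 0$ at the outset, to collapse $Q(t)^j\,1 = Q(t)^{j-1}(Q(t)-Q(0))\,1$, which makes every $j\ge 1$ summand of the telescope for $(Q(t+h)^n-Q(t)^n)\,1$ carry an extra factor $|t|$ and puts it wholesale into the $n|h|(|t|+|h|)$ error term. Only the $j=0$ term survives as a first-order contribution, and you treat it directly with Lemma~\ref{lemma:ht}(ii) and the coarse bound $\|W(t+h,n-1)\| \ll \delta_2^{n-1}|t+h|$. This bypasses the induction and the $M$-blocking device entirely while using the same two analytic inputs, Lemma~\ref{lemma:ht}(ii) and~\eqref{eq:H2iiisimplified}, and it lands on exactly the same form of bound. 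The only blemishes are cosmetic: you write $\|Q(t)^j 1\|\ll \delta_1^j|t|$ where the exponent should be $j-1$ (absorbed into the constant anyway), and for $\alpha=2$ the justification that the H\"older term dominates in item (iii) should appeal to $\ell_0(1/|h|)=\hat\ell(1/|h|)\to\infty$ rather than to ``$\alpha-1<1$''; neither affects correctness.
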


\begin{proof}
 Compute that
 \[
  \|W(t,n)\|=\sum_{j=0}^{n-1}\|Q(t)^j\|\,\|(Q(t)-Q(0))\|\,\|Q(0)^{n-j-1}\|,
 \]
\[
 \|W(t+h, n)-W(t, n)\|\ll\sum_{j=0}^{n-1}\|Q(t+h)^j\|\,\|(Q(t+h)-Q(t))\|\,\|Q(t)^{n-j-1}\|
\]
and
\[
 (Q(t)^n)'=\sum_{j=0}^{n-1}Q(t)^jQ'(t)Q(t)^{n-j-1}.
\]
Using these equations, as well as $\| Q(t+h)-Q(t)\| \ll |t|+|h|$ (which follows from~\eqref{eq:H2iiisimplified} and $\|Q^n(t)\| \leq C\delta_0^n$ from \eqref{eq-sp}, we have
\begin{equation}\label{eq-qq}
 \| Q(t+h) - Q(0) \| \ll \delta_0^n (|t|+|h|).
\end{equation}
 
\textbf{Items (i) and (iii)} are included in~\cite[Proof of Lemma 3.7]{MTejp}; the claimed estimates follow directly from the previous 
three displayed formulas, equation~\eqref{eq-sp}, assumptions (H2)(i) and (H2)(iv), respectively and Lemma~\ref{lemma-PQ}.
 
\textbf{Item (ii)}
Note that $W(t+h, n)-W(t, n)-ih\,(Q(0)^n)'=Q(t+h)^n-Q(t)^n-ih\,(Q(0)^n)'$.
\begin{quote}
Take $M \in \N$ such that $\| Q^M(t) \| \leq C \delta_0^M \leq \frac12$
in \eqref{eq-sp}.
\end{quote}
We will need a two-step induction argument, first for $n \leq M$, and then for $n > M$.
\\[1mm]
{\bf Step 1: $n \leq M$:}
Take the induction hypothesis:
for any $\alpha'\in (1,\alpha)$, there is $C_{ind} > 0$ such that
\begin{align}\label{eq:claaaim}
\nonumber & \left\|Q(t+h)^n-Q(t)^n-ih\,Q(0)^{n-1} Q'(0)- i h
 \Big(Q(0)Q'(0)+Q'(0)Q(0)\Big)\sum_{j=0}^{n-1} Q(0)^j\right\|\\
 &\qquad \qquad \ll  C_{ind}\, n |h| \left( C^n \delta_1^{n-1}\, (|t|^{\alpha'-1}+|h|^{\alpha'-1})  + (|t|+|h|) \right),
\end{align}
for $C$ from \eqref{eq-sp}. We will first prove this statement by induction
but only for $1 \leq n \leq M$.
Afterwards we explain how to adjust the induction for $n > M$
and with a new uniform constant $\hat C_{ind}$ instead of $C_{ind} C^n$.
It is very likely that the second term $n |h|(|t|+|h|)$ can be improved but this estimate suffices for the proof of the main result.


Since  $Q(0) \,1=R(0)(I-P(0))\,1=0$, we have $\sum_{j=1}^{n-1} Q(0)^j 1=0$ and $n\delta_0^n\le \delta_1^n$ for some $\delta_1\in(\delta_0, 1)$,
item \textbf{(ii)} follows.

We show that~\eqref{eq:claaaim} holds using an induction argument.
A straightforward (but lengthy) calculation verifies the statement for $n=2$. To see this,  compute that 
\begin{align*}
 Q(t+h)^2 &-Q(t)^2
 =Q(t+h)(Q(t+h)-Q(t))+(Q(t+h)-Q(t))Q(t)\\
 &=Q(t+h)\left(Q(t+h)-Q(t)-ih\, Q'(0)\right)\\
 &+\left(Q(t+h)-Q(t)-ih\, Q'(0)\right)Q(t)
 +ih\,Q(t+h)Q'(0)+ih\,Q'(0)Q(t)
\end{align*}
So,
\begin{align*}
 (Q(t+h)^2 -Q(t)^2)-ih\,Q(0)Q'(0)&-ih\,Q'(0)Q(0)=Q(t+h)\left(Q(t+h)-Q(t)-ih\,Q'(0)\right)\\
 &+\left(Q(t+h)-Q(t)-ih\, Q'(0)\right)Q(t)\\
 &+ih\,(Q(t+h)-Q(0))Q'(0)+ih\,Q'(0)(Q(t)-Q(0)).
\end{align*}
Using Lemma~\ref{lemma:ht}(ii), (H2)(iii) and Lemma~\ref{lemma-PQ},
\begin{align*}
 \left\|Q(t+h)^2 -Q(t)^2-ih\,Q(0)Q'(0)-ih\,Q'(0)Q(0)\right\|\ll 2\delta_0|h|\, (|t|^{\alpha'-1}+|h|^{\alpha'-1})+|h|\left(|t|+|h|\right),
\end{align*}
for any $\alpha'\in (1,\alpha)$. This verifies the statement for $n=2$. 
To make the assumption~\eqref{eq:claaaim} more clear , we also record $n=3$.
\begin{align*}
 Q(t+h)^{3}-Q(t)^{3}&=Q(t+h)^{2}(Q(t+h)-Q(t))+(Q(t+h)^{2}-Q(t)^{2})Q(t)\\
 &=Q(t+h)^{2}(Q(t+h)-Q(t)-ih Q'(0))\\
 &+(Q(t+h)^{2}-Q(t)^{2}-ih\,Q(0)Q'(0)-ih\,Q'(0)Q(0))Q(t)\\
 &+ih Q(t+h)^{2}Q'(0)+ih \left(Q(0)Q'(0)+Q'(0)Q(0)\right)Q(t)
\end{align*}
So,
 \begin{align*}
 Q(t+h)^{3}-Q(t)^{3}-&ih Q(0)^{2}Q'(0)+ih \left(Q(0)Q'(0)+Q'(0)Q(0)\right)Q(0)\\
 &=Q(t+h)^{2}(Q(t+h)-Q(t)-ih Q'(0))\\
 &+(Q(t+h)^{2}-Q(t)^{2}-ih\,Q(0)Q'(0)-ih\,Q'(0)Q(0))Q(t)\\
 &+ih \left(Q(t+h)^{2}-Q(0)^2\right)Q'(0)\\&+ih \left(Q(0)Q'(0)+Q'(0)Q(0)\right)(Q(t)-Q(0))\\
 &=I_1+I_2+I_3+I_4.
\end{align*}
Using the same same estimates as for $n=2$,
we have $|I_1|\ll \delta_0^2 |h|\, (|t|^{\alpha'-1}+|h|^{\alpha'-1})$.
Using the statement on $n=2$ and the fact that $\|Q(t)\|\ll \delta_0$, $|I_2|\ll 2\delta_0^2\,|h|\, (|t|^{\alpha'-1}+\delta_0\,|h|^{\alpha'-1})+|h|\left(|t|+|h|\right)$,
$|I_3|\ll \delta_0^2|h|(|t|+|h|)$ (using also~\eqref{eq-qq})
and $|I_4|\ll |h|(|t|+|h|)$. Thus,
\begin{align*}
 |I_1+I_2+I_3+I_4|\ll 3\delta_0^2 |h|\, (|t|^{\alpha'-1}+|h|^{\alpha'-1}) +(1+\delta_0)|h|(|t|+|h|).
\end{align*}
Since $1+\delta_0<3$, the statement for $n=3$ is verified.

The general case $n\ge 1$ follows by induction, assuming the statement that~\eqref{eq:claaaim} holds for $n$ and want to prove the hypothesis for $n+1$.
Note that
\begin{align*}
 Q(t+h)^{n+1}&-Q(t)^{n+1}=Q(t+h)^{n}(Q(t+h)-Q(t))+(Q(t+h)^{n}-Q(t)^{n})Q(t)\\
 &=Q(t+h)^{n}\left(Q(t+h)-Q(t)-ih\, Q'(0)\right)\\
 &+\left(Q(t+h)^{n}-Q(t)^{n}-ih\,Q(0)^{n-1} Q'(0)-i h \left(Q(0)Q'(0)+Q'(0)Q(0)\right)\sum_{j=0}^{n-1} Q(0)^j\right)Q(t)\\
 &+ih\,Q(t+h)^nQ'(0)+ih\,Q(0)^{n-1}Q'(0)Q(t)\\
 &+i h \Big(Q(0)Q'(0)+Q'(0)Q(0)\Big)\sum_{j=0}^{n-1} Q(0)^jQ(t).
\end{align*}
Thus,
\begin{align*}
 Q(t+h)^{n+1}&-Q(t)^{n+1}-ihQ(0)^nQ'(0)-i h \Big(Q(0)Q'(0)+Q'(0)Q(0)\Big)\sum_{j=0}^{n-1} Q(0)^j\,Q(0)\\
 &=Q(t+h)^{n}\left(Q(t+h)-Q(t)-ih\, Q'(0)\right)\\
 &+\left(Q(t+h)^{n}-Q(t)^{n}-ih\,Q(0)^{n-1} Q'(0)-i h \Big(Q(0)Q'(0)+Q'(0)Q(0)\Big)\sum_{j=0}^{n-1} Q(0)^j\right)Q(t)\\
 &+ih\left(Q(t+h)^n-Q(0)^n\right)Q'(0)\\
 &+i h \Big(Q(0)Q'(0)+Q'(0)Q(0)\Big)\sum_{j=0}^{n-1} Q(0)^j(Q(t)-Q(0))\\
 &=J_1+J_2+J_3+J_4.
\end{align*}
Now, $|J_1|\ll \delta_0^n |h|\, (|t|^{\alpha'-1}+|h|^{\alpha'-1})$. Using~\eqref{eq:claaaim} and \eqref{eq-sp},
\begin{equation}\label{eq-J2}
|J_2| \leq C C_{ind} n |h| \left(C^n \delta_1^{n-1} \, (|t|^{\alpha'-1}+|h|^{\alpha'-1})+(|t|+|h|)\right).
\end{equation}
Using \eqref{eq-qq}, $|J_3| \ll \delta_0^1|h| \, (|t|+|h|)$.
Since $\|Q(t)-Q(0)\|\ll |t|$ and $\|\sum_{j=0}^{n-1} Q(0)^j\|=O(1)$, we get $|J_4|\ll |h|(|t|+|h|)$. 
The statement on $n+1$ follows by adding these estimates.
\\[1mm]
{\bf Step 2: $n > M$:}
Next, we will do the induction in steps of $M$ iterates.
That is, we consider
$$
\hat R(t) := R^M(t) = \lambda^M(t) P(t) + Q^M(t) =:
\hat \lambda(t) P(t) + \hat Q(t),
$$
and recall that $\|\hat Q\| \leq \frac12$ by the choice of $M$.
The reason why the previous induction is of no use for all $n$ is due to the factor $C^n$, of which we have no control. But using steps of $M$ iterates, $C$
can be replaced by $\frac12$, so if we use the induction hypothesis:
for any $\alpha'\in (1,\alpha)$, there is $\hat C_{ind} > 0$ such that
\begin{align}\label{eq:claaaim2}
\nonumber & \left\|\hat Q(t+h)^n-\hat Q(t)^n-ih\, \hat Q(0)^{n-1} \hat Q'(0)- i h
 \Big(\hat Q(0) \hat Q'(0)+\hat Q'(0) \hat Q(0)\Big)\sum_{j=0}^{n-1} \hat Q(0)^j\right\|\\
 &\qquad \qquad \ll  \hat C_{ind}\, n |h| \left( \delta_2^{n-1}\, (|t|^{\alpha'-1}+|h|^{\alpha'-1})  + (|t|+|h|) \right),
\end{align}
for $\delta_2 := \delta_1^{1/M}$,
the above induction proof works.
Apart from $M$-dependent constants, the estimates of $J_1$, $J_3$ and $J_4$ still hold.
This gives the required result
for all $n = n_0 + jM$ and $j \geq 0$ (where the initial step $j=0$
follows from the first induction for $1 \leq n_0 \leq M$).
\end{proof}

\subsubsection{A final form of the decomposition of $\int_{\Omega} R(t)^n 1\, d\mu$
  into $\Psi(t)^n$ and `good' quantities}

To simplify the statement of Lemmas~\ref{lemma:dec} and \ref{lemma-Qal} further, we record the following facts.
We recall that we assume $\int_{\Omega} v\, d\mu=0$ (when $\alpha>1$).
\begin{lemma}
 \label{lemma:pq0} 
 Let $(Q(0)^n)'=(Q(t)^n)'|_{t=0}$, $n\ge 1$.
 Then
 \begin{equation*}
  \int_{\Omega} P'(0)\, 1\, d\mu=0\text{ and }\int_{\Omega} (Q(0)^n)'\, 1\, d\mu=\int_{\Omega} Q(0)^{n-1}Q'(0)\, 1\, d\mu=0.
 \end{equation*}
\end{lemma}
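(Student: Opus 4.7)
The plan rests on two dynamical facts that together do essentially all the work. Since the constant function $1\in\cB$ is an eigenfunction of $R(0)$ with eigenvalue $1$, we have $P(0)\cdot 1=1$. Since $R(0)$ is the transfer operator of the measure-preserving map $T$, $\int R(0)f\,d\mu=\int f\,d\mu$ for all $f\in L^1$, and spectral decomposition then forces $P(0)f=\bigl(\int f\,d\mu\bigr)\cdot 1$. The immediate consequence I will use repeatedly is that
\[
\int_{\Omega} Q(0)g\,d\mu=\int_{\Omega} R(0)(I-P(0))g\,d\mu=\int_{\Omega} g\,d\mu-\int_{\Omega} g\,d\mu=0 \qquad\text{for all } g\in\cB,
\]
since $Q(0)=R(0)(I-P(0))$.

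For $\int_\Omega P'(0)\cdot 1\,d\mu=0$, I would differentiate the projection identity $P(t)^2=P(t)$ at $t=0$, obtaining $P'(0)P(0)+P(0)P'(0)=P'(0)$. Evaluating both sides at $1$ and using $P(0)\cdot 1=1$ collapses this to $P(0)P'(0)\cdot 1=0$. The description of $P(0)$ above reads this as $\bigl(\int P'(0)\cdot 1\,d\mu\bigr)\cdot 1=0$, and the claim follows.

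For the $Q$-identities, the first equality $\int(Q(0)^n)'\cdot 1\,d\mu=\int Q(0)^{n-1}Q'(0)\cdot 1\,d\mu$ is a direct computation. Expanding
\[
(Q(t)^n)'=\sum_{j=0}^{n-1}Q(t)^j\,Q'(t)\,Q(t)^{n-1-j},
\]
evaluating at $t=0$, and applying to $1$, every term with $j<n-1$ vanishes because $Q(0)\cdot 1=R(0)(I-P(0))\cdot 1=0$, leaving only the $j=n-1$ summand. To see this summand integrates to zero, for $n\ge 2$ note that $Q(0)^{n-1}Q'(0)\cdot 1=Q(0)\bigl[Q(0)^{n-2}Q'(0)\cdot 1\bigr]$ lies in the range of $Q(0)$, so the integral vanishes by the opening paragraph.

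The only case requiring a separate argument is $n=1$, where the plan is to differentiate the spectral decomposition $R(t)=\lambda(t)P(t)+Q(t)$ at $t=0$, apply to $1$, and integrate, yielding
\[
\int_{\Omega} R'(0)\cdot 1\,d\mu=\lambda'(0)+\int_{\Omega} P'(0)\cdot 1\,d\mu+\int_{\Omega} Q'(0)\cdot 1\,d\mu.
\]
The left-hand side equals $i\int v\,d\mu=0$ by the standing assumption for $\alpha>1$. Moreover $\Psi'(0)=i\int v\,d\mu=0$, and Lemma~\ref{lemma-lambda}(ii) gives $|V'(t)|\ll|t|$, so $V'(0)=0$; hence $\lambda'(0)=\Psi'(0)+V'(0)=0$. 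Combined with the first part of the lemma, this forces $\int Q'(0)\cdot 1\,d\mu=0$. The only mild subtlety is that this $n=1$ case cannot be dispatched by the ``factor out $Q(0)$'' trick used for $n\ge 2$; instead one leverages both the vanishing of $\int v\,d\mu$ and the vanishing of $P'(0)$ on $\mu$-average.
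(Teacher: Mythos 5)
Your proof is correct and takes a genuinely different route from the paper's. For $\int_\Omega P'(0)\,1\,d\mu = 0$, the paper applies $1$ to the Riesz resolvent formula $P'(0)=\frac{1}{2\pi i}\int_{|\xi-1|=\delta}(\xi-R(0))^{-1}R'(0)(\xi-R(0))^{-1}\,d\xi$, Neumann-expands $(\xi-R(0))^{-1}$, and concludes via Fubini and $\int_\Omega v\,d\mu=0$; you instead differentiate the projection identity $P(t)^2=P(t)$ at $t=0$, apply to $1$, and read off $P(0)P'(0)\,1=0$, which is more elementary and makes transparent that this particular vanishing does not depend on the centering of $v$ at all. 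For the $Q$-identities, the paper substitutes $Q'(0)\,1=-R(0)P'(0)\,1$ (obtained from differentiating $Q(t)=R(t)(I-P(t))$) together with $Q(0)^{n-1}=R(0)^{n-1}(I-P(0))$ and telescopes the integral to a cancellation of two copies of $\int P'(0)\,1\,d\mu$. You instead observe the cleaner structural fact that anything in the range of $Q(0)=R(0)(I-P(0))$ integrates to zero against $\mu$, which immediately dispatches $n\ge 2$, and then handle $n=1$ by differentiating the spectral decomposition $R(t)=\lambda(t)P(t)+Q(t)$ and using $\lambda'(0)=0$, $\int R'(0)\,1\,d\mu=i\int v\,d\mu=0$ and the first part. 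Both arguments are sound; yours is arguably more conceptual. One small efficiency you leave on the table: your own first step gives $P(0)P'(0)\,1=0$, hence
\[
Q'(0)\,1 \;=\; -R(0)P'(0)\,1 \;=\; -R(0)(I-P(0))P'(0)\,1 \;=\; -Q(0)P'(0)\,1
\]
also lies in the range of $Q(0)$ (the middle identity uses $P(0)\,1=1$ and differentiation of $Q(t)=R(t)(I-P(t))$), so the $n=1$ case could be folded into the same one-line ``range of $Q(0)$'' argument as $n\ge 2$, avoiding the separate computation with $\lambda'(0)$.
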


\begin{proof}
As in the proof of Lemma~\ref{lemma:ht},
\[
 P'(0)\, 1 = \frac{1}{2\pi\, i}\int_{|\xi-1|=\delta}(\xi-R(0))^{-1}R'(0)(\xi-R(0))^{-1}\, 1\, d\xi.
\]
But  $(\xi-R(0))^{-1}\, 1=(\xi-1)^{-1}$
for all $\xi \neq 1$.
Since we also know that $R'(0)\, 1= i R(0) v$, using Fubini's theorem we obtain  that
\begin{align*}
 2\pi\,i\,\int_{\Omega} P'(0)\, 1\, d\mu &=i \int_{\Omega} \int_{|\xi-1|=\delta}(\xi-1)^{-1}(\xi-R(0))^{-1}R(0) v\, d\xi\, d\mu\\
 &
 =i \int_{|\xi-1|=\delta}(\xi-1)^{-1}\sum_{j=0}^\infty \xi^{-j-1}\int_{\Omega} R(0)^j
 R(0) v\, d\mu\, d\xi\\
 &
 = i \int_{|\xi-1|=\delta}(\xi-1)^{-1}\sum_{j=0}^\infty \xi^{-j-1}\int_{\Omega} R(0)^{j+1}
 v\,  d\mu\, d\xi\\
 &=i \int_{|\xi-1|=\delta}(\xi-1)^{-1}\sum_{j=0}^\infty \xi^{-j-1}\int_{\Omega}  v\, d\mu\, d\xi=0,
 \end{align*}
concluding the argument for the first part of the statement.

 Next, recall that $(Q(t)^n)'=\sum_{j=0}^{n-1}Q(t)^jQ'(t)Q(t)^{n-j}$.
 Hence,
 \[
  \int_{\Omega} (Q(0)^n)'\, 1\, d\mu=\int_{\Omega} \sum_{j=0}^{n-1}Q(0)^jQ'(0)Q(0)^{n-j-1}\, 1\, d\mu.
 \]
Since $Q(0)\, 1=0$, $Q(0)^{n-j-1}\, 1=0$, for all $j<n-1$. Thus,
\[
 \int_{\Omega} (Q(0)^n)'\, 1\, d\mu=\int_{\Omega} Q(0)^{n-1}Q'(0)\, 1\, d\mu.
\]
Recall from~\eqref{eq:qpr0} that $Q'(0)\, 1=-R(0)P'(0)\, 1$. As a consequence, 

\begin{align*}
 \int_{\Omega} (Q(0)^n)'\, 1\, d\mu &=-\int_{\Omega} Q(0)^{n-1}R(0)P'(0)\, 1\,d\mu\\
& =-\int_{\Omega} R(0)^{n-1}(I-P(0))R(0)P'(0)\, 1\,d\mu\\
 &=-\int_{\Omega} R(0)^{n}P'(0)\, 1\,d\mu+\int_{\Omega} R(0)^{n-1}P(0)R(0)P'(0)\, 1\,d\mu\\
 &=-\int_{\Omega} P'(0)\, 1\,d\mu+\int_{\Omega}R(0) P'(0)\, 1\,d\mu =0,
\end{align*}
which ends the proof.~\end{proof}

Putting together the last three lemmas we obtain

\begin{prop}
\label{prop:dec} Suppose that (H1)--(H3) hold.
Let 
\(
U(t, n)=\int_{\Omega} R(t)^n 1\, d\mu-
\Psi(t)^n.
\)
Then the following hold for
all $n\ge 1$ and all $|t|,|h|\in B_\eps(0)$ with $|h|\le |t|/2$.

\begin{itemize}
 \item[(i)] For $\alpha\in (0,1)$, 
  \begin{eqnarray*}
   |U(t, n)| \!\!\!\! &\le& \!\!\!\!  \left(C_0|t|^{\alpha}\ell\Big(\frac{1}{|t|}\Big)
   +n |t|^{2\alpha}\ell\Big(\frac{1}{|t|}\Big)^2\right)\,|\Psi(t)^n|,
   \\
 |U(t+h,n)-U(t, n)| \!\!\!\! &\le& \!\!\!\! \left(|h|^{\alpha}\ell\Big(\frac{1}{|h|}\Big)
 + n|h|^{\alpha}\ell\Big(\frac{1}{|h|}\Big)\,|t|^{\alpha}\,\ell\Big(\frac{1}{|t|}\Big)\right)\,|\Psi(t)^n|.
    \end{eqnarray*}
    
    \item[(ii)]
    If $\alpha\in (1,2]$, the following hold for any $\alpha'\in (1,\alpha)$.
  \begin{align*}
   &|U(t, n)|\le \left(|t|^{\alpha'}+n |t|^2\right)\,|\Psi(t)^n|
    \end{align*}
  and
  \begin{align*}
   | &U(t+h,n)-U(t, n)|
 \le |h|\, |t|^{\alpha'-1}\,|\Psi(t)^n| +n\,|h|\, |t|.
  \end{align*}

\item[(iii)] If $\alpha\in (1,2]$, then $U$ is differentiable in $t$ and writing 
$U'(t,n)$ for its derivative,
  \begin{align*}
   U'(t, n)\ll \left( |t|^{\alpha-1}\ell_0\left(\frac{1}{|t|}\right)+  n\,|t|
     +n^2\,|t|^{\alpha+1}\ell_0\left(\frac{1}{|t|}\right)\right)\,|\Psi(t)^n|
   \end{align*}
   and
 \begin{align*}
 \Big|U'(t+h,n)- & U'(t, n)\Big|\ll
 |h|^{\alpha-1}\ell_0\left(\frac{1}{|h|}\right)\,|\Psi(t)^n|\\
 &+n\left(|t|\,|h|^{\alpha'-1}\ell_0\left(\frac{1}{|h|}\right)
 +|h|
 \right)|\Psi(t)^n|\\
 &+n^2 \left(|t|^{2\alpha-1}|h|\ell_0\left(\frac{1}{|t|}\right)^2+|t|^2|h|^{\alpha-1}\ell_0\left(\frac{1}{|h|}\right)+ |h|\, |t|^\alpha\ell_0\left(\frac{1}{|t|}\right)\right)\,|\Psi(t)^n|\\
 &+n^3 |t|^{2\alpha}|h|\ell_0\left(\frac{1}{|t|}\right)^2\,|\Psi(t)^n|.
    \end{align*}
\end{itemize}
\end{prop}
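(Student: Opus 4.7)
Start from the decomposition
\[
U(t,n) = \hat V(t,n) + \int_\Omega W(t,n) \, 1 \, d\mu,
\]
which follows from the definitions of $\hat V$ and $W$ together with $\int_\Omega Q(0)^n \, 1 \, d\mu = 0$ for $n \geq 1$; this last identity holds because $1$ is (up to normalisation) the eigenvector of $R(0)$ for the simple eigenvalue $1$, so $P(0)\,1 = 1$ and $Q(0)\,1 = R(0)(I-P(0))\,1 = 0$. Each assertion of the proposition then reduces to summing the corresponding assertion of Lemma~\ref{lemma:dec} (for $\hat V$) with the $\int_\Omega \cdot \, 1\,d\mu$ version of the corresponding assertion of Lemma~\ref{lemma-Qal} (for $W$), while using Lemma~\ref{lemma:pq0} to annihilate the various linear correction terms.

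For item (i), the bounds follow by adding Lemma~\ref{lemma:dec}(i) to Lemma~\ref{lemma-Qal}(i) via $|\int_\Omega W\,1\,d\mu| \leq \|W(t,n)\|$; no cancellations are needed. For item (ii), the correction term $it\lambda(t)^n \int P'(0)\,1\,d\mu$ in Lemma~\ref{lemma:dec}(ii) (and its $h$-analogue in (iii)) vanishes by the first part of Lemma~\ref{lemma:pq0}, while the correction $ih \int_\Omega Q(0)^{n-1}Q'(0)\,1\,d\mu$ in the integrated form of Lemma~\ref{lemma-Qal}(ii) vanishes by the second part of Lemma~\ref{lemma:pq0}. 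For item (iii), differentiate the decomposition as $U'(t,n) = \hat V'(t,n) + \int_\Omega W'(t,n)\,1\,d\mu$; the correction terms in Lemma~\ref{lemma:dec}(iv) are again annihilated by $\int P'(0)\,1\,d\mu = 0$, and $\int_\Omega W'(t,n)\,1\,d\mu$ is controlled via the continuity bound of Lemma~\ref{lemma-Qal}(iii) relative to the base point $t=0$, at which it equals $\int_\Omega Q(0)^{n-1}Q'(0)\,1\,d\mu = 0$ (the formula $(Q(t)^n)' = \sum_j Q(t)^j Q'(t) Q(t)^{n-j-1}$ collapses at $t = 0$ applied to $1$ since $Q(0)^{n-j-1}\,1 = 0$ for $j < n-1$, leaving only the $j = n-1$ term, which is then eliminated by Lemma~\ref{lemma:pq0}).

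A secondary but pervasive issue is that Lemma~\ref{lemma-Qal} produces geometric factors $\delta_1^n$ or $\delta_2^n$, whereas the right-hand sides of the proposition feature $|\Psi(t)^n|$. By Fact~\ref{fact:chfact1} and continuity of $\Psi$ at $0$, shrinking $\eps$ if needed, we arrange $|\Psi(t)| \geq \max(\delta_1,\delta_2)$ on $B_\eps(0)$, so $\delta_j^n \leq |\Psi(t)^n|$ uniformly and each $W$-contribution is absorbed into the desired $|\Psi(t)^n|$-weighted form. The main bookkeeping obstacle is item (iii): the continuity bound for $\hat V'$ in Lemma~\ref{lemma:dec}(iv) is a lengthy sum mixing moduli $|h|^{\alpha-1}\ell_0(1/|h|)$, plain $|h|$, and various powers of $|t|$, and extracting the clean form stated in (iii) requires systematically using $|h| \leq |t|/2$ to dominate cross terms — exactly the device already used inside Lemma~\ref{lemma:dec}(iv) to absorb the stray $|t|^{\alpha'+\alpha-2}|h|\ell_0(1/|t|)$ contribution.
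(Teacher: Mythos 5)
Your proposal is correct and follows essentially the same route as the paper's own proof: decompose $U(t,n)=\hat V(t,n)+\int_\Omega W(t,n)\,1\,d\mu$, invoke Lemma~\ref{lemma:pq0} to annihilate the linear correction terms $\int_\Omega P'(0)\,1\,d\mu$ and $\int_\Omega Q(0)^{n-1}Q'(0)\,1\,d\mu$, and add the bounds from Lemmas~\ref{lemma:dec} and~\ref{lemma-Qal}. Your explicit remark that $\delta_1^n,\delta_2^n\le|\Psi(t)^n|$ after shrinking $\eps$ (so the $W$-contributions can be absorbed into the $|\Psi(t)^n|$-weighted right-hand sides) makes precise a step the paper treats as implicit; this is a useful clarification rather than a deviation from the paper's argument.
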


\begin{proof}
In the notation of Lemma~\ref{lemma:dec} and Lemma~\ref{lemma-Qal},
\[
 U(t, n)=\int_{\Omega} R(t)^n 1\, d\mu-
\Psi(t)^n=\hat V(t, n)+\int_{\Omega} W(t, n)\, 1\, d\mu
\]
since $W(t, n)\, 1=Q(t)^{n}\, 1-Q(0)^{n}\, 1=Q(t)^{n}\, 1$
(because $Q(0)^{n}\, 1=0$ for all $n\ge1$).

 \textbf{Item (i)} follows immediately from Lemma~\ref{lemma:dec} (i) and Lemma~\ref{lemma-Qal} (i).
 
 \textbf{Item (ii).}
 By Lemma~\ref{lemma:pq0}, $\int_{\Omega} P'(0)\, 1\, d\mu=\int_{\Omega} (Q(0)^n)'\, 1\, d\mu=\int_{\Omega} Q(0)^{n-1}Q'(0)\, 1\, d\mu=0$.
 
 Using also Lemma~\ref{lemma:dec} (ii),
   \[
   \left|U(t, n)\right|\ll  \left(|t|^{\alpha'}+n |t|^2\right)\,|\Psi(t)^n|,
    \]
  which proves the first statement of item (ii).
  
 We already know that $\int_{\Omega} Q(0)^{n-1}Q'(0)\, 1\, d\mu=0$.
This together with Lemma~\ref{lemma-Qal} (ii) (and recalling $h|\le |t|/2$) implies that
    \[
     \left|\int_{\Omega}\left(W(t+h, n)-W(t, n)\right)\,1\, d\mu\right|\ll \delta_1^n\,|h|\,
  |t|^{\alpha'-1}+n |h|\,|t|.
    \]

Combining this with Lemma~\ref{lemma:dec} (iii),
    \begin{align*}
   \left|U(t+h,n)- U(t, n)\right|&\le \left(|h|\, |t|^{\alpha'-1}
   +  n\,|h|\, |t|^{\alpha-1}\,\ell_0(1/|t|)\right)\,|\Psi(t)^n|
   +n |h|\,|t|\\
   &\ll|h|\, |t|^{\alpha'-1}\,|\Psi(t)^n|+n |h|\,|t|,
\end{align*}
  which proves the second statement of item (ii).

\textbf{Item (iii)} follows immediately from Lemma~\ref{lemma:dec} (iv),  Lemma~\ref{lemma-Qal} (iii) 
and the fact that $\int_{\Omega} P'(0)\, 1\, d\mu=0$.~\end{proof}

\subsection{Proof of Theorem~\ref{thm-LDGM}}
\label{subsec:main abstr}
  
As already mentioned in the paragraph after the statement of Theorem~\ref{thm-LDGM}, it suffices to show that~\eqref{eq:rightdyn} and~\eqref{eq:rightdynalpha2} hold. 
In the notation of Proposition~\ref{prop:dec},
$\E_\mu(e^{itv_n})=\int_{\Omega} R(t)^n 1\, d\mu=U(t, n)+
\Psi(t)^n$ and the LHS of equation~\eqref{eq:rightdyn}, for $\alpha\in (0,1)\cup (1,2)$, becomes

\begin{equation}
\label{eq:rightdyn2}
 \left|\int_{-\eps}^\eps (e^{-itN}-e^{-it(N+g(n))})
 \psi_Y(t)\frac{U(t,n)}{it}\, dt\right|=O\left(D(N)\right)+o\left(n \ell_0(N) N^{-\alpha}\right).
\end{equation}
where (in the notation of Theorem~\ref{thm-LDGM}),
\[
 D(N)=\begin{cases}
       \log N\,\ell(N)N^{-\alpha}, & \text{ if }\alpha\in (0,1),\\
       \,N^{-(\alpha-\delta)}, & \text{ if }\alpha\in (1,2].
      \end{cases}\]
      
Similarly, for $\alpha=2$,
equation~\eqref{eq:rightdynalpha2} becomes
      
      \begin{align}
\label{eq:rightdyn10}
 \left|\int_{-\eps}^\eps (e^{-itN}-e^{-it(N+g(n))})
 \psi_Y(t)\frac{U(t,n)}{it}\, dt\right|&=O(D(N))+ O\left(n \log N\, N^{-2}\right)+O\left(n \hat\ell(N) N^{-2}\right).
\end{align}
In what follows we show that~\eqref{eq:rightdyn2} and ~\eqref{eq:rightdyn10} hold by showing that for $\alpha\in (0,1)\cup (1,2)$,
\begin{equation}
\label{eq:rightdyn3}
 \left|\int_{-\eps}^\eps e^{-itN}
 \psi_Y(t)\frac{U(t,n)}{it}\, dt\right|=O\left(D(N)\right)+o\left(n \ell_0(N) N^{-\alpha}\right).
\end{equation}
 and for $\alpha=2$,
 \begin{equation}
\label{eq:rightdyn55555}
 \left|\int_{-\eps}^\eps e^{-itN}
 \psi_Y(t)\frac{U(t,n)}{it}\, dt\right|=O(D(N))+ O\left(n \log N\, N^{-2}\right)+O\left(n \hat\ell(N) N^{-2}\right).
\end{equation}

A similar argument shows that for $\alpha\in (0,1)\cup (1,2)$,
\begin{equation*}
 \left|\int_{-\eps}^\eps e^{-it(N+g(n))}
 \psi_Y(t)\frac{U(t,n)}{it}\, dt\right|=O\left(D(N+g(n))\right)
 +o\left(\frac{n \ell_0(N+g(n))}{ (N+g(n))^{\alpha}}\right)
 +O\left(\frac{n \log (N+g(n))}{(N+g(n))^{2}}\right),
\end{equation*}
 and a corresponding statement for $\alpha=2$,
concluding the proof of Theorem~\ref{thm-LDGM}.

The validity of equations~\eqref{eq:rightdyn3} and~\eqref{eq:rightdyn55555} will be shown via a `modulus of continuity' argument. To carry out such an argument, it is crucial that we have enough decay in $t$ in the expression of $U(t,n)$ as to
counteract the effect of the division by $t$ in the integrand of~\eqref{eq:rightdyn3}. Proposition~\ref{prop:dec}
tracks this type of decay in $t$ (along with finer continuity properties in $h$)
and it is in this sense that we call $U(t,n)$  a `good quantity'.

Before embarking on the final calculations, we need to recall one more technical lemma too.

\begin{lemma}{~\cite[Lemma 2.3]{MTejp}}
\label{lemma:20MT} 
Let $L:(0,\infty)\to (0,\infty)$ be a slowly varying function.
For all $\beta>0$, there exists $C>0$ so that for all $n\ge 1$,
\[
 \int_{0}^\eps t^{\beta}L(1/t)\, |\Psi(t)^n|\,dt\le C\frac{L(a_n)}{a_n^{1+\beta}}.
\]
\end{lemma}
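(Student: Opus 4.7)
The natural plan is to rescale and reduce the claim to an absolute constant. I would substitute $u = a_n t$, so that $dt = du/a_n$ and
\[
\int_0^\eps t^\beta L(1/t)\,|\Psi(t)^n|\,dt \;=\; \frac{1}{a_n^{1+\beta}} \int_0^{\eps a_n} u^\beta\, L(a_n/u)\, |\Psi(u/a_n)^n|\,du.
\]
The target $CL(a_n)/a_n^{1+\beta}$ then reduces to showing that the rescaled integral is bounded by $C\,L(a_n)$, uniformly in $n$. I would then factor out $L(a_n)$ and aim for
\[
\int_0^{\eps a_n} u^\beta\, \frac{L(a_n/u)}{L(a_n)}\, |\Psi(u/a_n)^n|\,du \;\le\; C',
\]
so the heart of the matter is a uniform estimate of the rescaled ratio and of the exponential factor.

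The first ingredient is Fact~\ref{fact:chfact1}, which gives $|\Psi(u/a_n)^n| \le \exp\{-c\, n (u/a_n)^\alpha \ell_0(a_n/u)\}$ for $u \in (0,3\eps a_n)$. By the definition of $a_n$ in~\eqref{eq:hatell} we have $n \ell_0(a_n) = a_n^\alpha(1+o(1))$, hence
\[
n (u/a_n)^\alpha \ell_0(a_n/u) \;=\; u^\alpha\,\frac{\ell_0(a_n/u)}{\ell_0(a_n)}\,(1+o(1)).
\]
The uniform convergence theorem for slowly varying functions (equivalently Potter's bounds) gives, for any $\delta>0$, a constant $C_\delta$ such that
\[
\frac{\ell_0(a_n/u)}{\ell_0(a_n)} \;\ge\; C_\delta^{-1}\min(u^\delta, u^{-\delta}), \qquad \frac{L(a_n/u)}{L(a_n)} \;\le\; C_\delta \max(u^\delta, u^{-\delta}),
\]
uniformly for $u \in (0,\eps a_n)$ and $n$ large. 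Choosing $\delta < \alpha$ and $\delta < \beta$, this yields $|\Psi(u/a_n)^n| \le \exp\{-c' u^\alpha/2\}$ on $u \ge 1$ (say), while for $u \in (0,1]$ the bound $|\Psi| \le 1$ is used directly.

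Putting these pieces together, the rescaled integrand is dominated by an integrable function of $u$ on $(0,\infty)$: near $0$ by $C u^{\beta-\delta}$ (integrable since $\beta > \delta$), and for $u \ge 1$ by $C u^{\beta+\delta} e^{-c' u^\alpha/2}$, which is integrable. The resulting absolute constant $C'$ does not depend on $n$, and the claim follows. The only delicate step is the uniform Potter-type control: one must allow a small $\delta$-loss in both $\ell_0$ and $L$, and ensure that the exponential decay (with its own $\ell_0$ slowly varying perturbation) still dominates $u^{\beta+\delta}$ at infinity. Since $\alpha>0$ the exponential $e^{-c' u^\alpha/2}$ always wins, so this is mainly a bookkeeping concern; the $\alpha=2$ case with $\ell_0 = \hat\ell$ behaving like $\log$ is handled identically, as the argument never used anything beyond slow variation of $\ell_0$.
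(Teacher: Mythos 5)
Your argument is correct, and in fact the paper gives no proof of this lemma at all: it is imported verbatim from \cite[Lemma 2.3]{MTejp}, so there is nothing to compare against except the standard argument, which is exactly the one you give (rescale $u=a_nt$, invoke Fact~\ref{fact:chfact1} together with $n\ell_0(a_n)\sim a_n^\alpha$, and control the slowly varying ratios by Potter's bounds). Two small points of hygiene: after applying the Potter bound to $\ell_0$ the exponential you actually get on $u\ge 1$ is $\exp\{-c'u^{\alpha-\delta}\}$ rather than $\exp\{-c'u^{\alpha}/2\}$ --- you acknowledge this implicitly at the end, and since $\delta<\alpha$ it still dominates $u^{\beta+\delta}$, so nothing breaks; and the asymptotics $n\ell_0(a_n)=a_n^\alpha(1+o(1))$ and the Potter bounds only kick in for $n\ge n_0$, so the uniform claim ``for all $n\ge 1$'' requires the (trivial) remark that the finitely many remaining $n$ are absorbed by enlarging $C$, using $\int_0^\eps t^\beta L(1/t)\,dt<\infty$.
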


We can now proceed to

\begin{pfof}{Equation~\eqref{eq:rightdyn3}}

{\bf{The case $\alpha\in(0,1)$}.} Set $f(t,n)=\psi_Y(t)\frac{U(t,n)}{t}$
and $K(n,N)=\int_{-\eps}^\eps e^{-itN}
 f(t,n)\, dt$.
To be able to apply the modulus of continuity argument we need to 
split the domain of integration (as to be able to deal with  the difference
coming from $\frac{1}{t}-\frac{1}{t-\frac{\pi}{N}}$). For this purpose, write
\begin{align*}
 K(n,N)&=\int_{-\frac{2\pi}{N}}^{\frac{2\pi}{N}} e^{-itN} f(t, n)\, dt
 +\int_{-\eps}^{-\frac{2\pi}{N}} e^{-itN} f(t, n)\, dt
 +\int_{\frac{2\pi}{N}}^\eps e^{-itN} f(t, n)\, dt\\
 &= K_0(n,N)+ K^-(n,N)+K^+(n,N).
\end{align*}

By Proposition~\ref{prop:dec} (i), $|f(t, n)|\ll \left(n |t|^{2\alpha-1}\ell\Big(\frac{1}{|t|}\Big)^2
   +|t|^{\alpha-1}\ell\Big(\frac{1}{|t|}\Big)\right)|\Psi(t)^n|$.
Thus, we compute (using Karamata's Theorem) that
\begin{align*}
 | K_0(n,N)|&\ll
 \int_{0}^{\frac{2\pi}{N}} t^{\alpha-1}\ell(1/t)\,dt +n\int_{0}^{\frac{2\pi}{N}} t^{2\alpha-1}\ell(1/t)^2\,dt\\
 & \ll  N^{-\alpha}\ell(N)+ n\,\ell(N)^2N^{-2\alpha}
  =o(n N^{-\alpha}\ell(N)).
\end{align*}

We estimate $K^+$ (via modulus of continuity), the estimate for $K^{-}$ is similar.
With the change of variable $t\to t-\pi/N$,
\[
K^+(n,N)=-\int_{(2\pi+\pi)/N}^{\eps+\pi/N} e^{-itN}f\left(t-\frac{\pi}{N}, n\right)\, dt.
\]
Summing the two expressions of $K^+$,
\begin{align*}
 2 K^+(n,N)& = -\int_{\eps}^{\eps+\pi/N}e^{-itN} f\left(t-\frac{\pi}{N}, n\right)\, dt
  +\int_{\frac{2\pi}{N}}^{ \frac{2\pi+\pi}{N} } e^{-itN} f\left(t-\frac{\pi}{N}, n\right)\, dt\\
 &\quad +\int_{\frac{2\pi}{N}}^{\eps} e^{-itN} \left(f(t, n)-f\left(t-\frac{\pi}{N}, n\right)\right)\, dt
 =K_1+K_2+K_3.
\end{align*}

Since the integrands of $K_1$ and $K_2$ are bounded, $|K_1|, |K_2|=o(n N^{-\alpha}\ell(N))$.
We continue with $K_3$, computing that
\begin{align*}
 |K_3|\le\ & \left|\int_{\frac{2\pi}{N}}^{\eps} e^{-itN}\left(\psi_Y(t)-\psi_Y\left(t-\frac{\pi}{N}\right)\right)\, \frac{U(t, n)}{t}\, dt\right|\\
 &+\left|\int_{\frac{2\pi}{N}}^{\eps} e^{-itN}\psi_Y\left(t-\frac{\pi}{N}\right)\,\frac{U(t, n)-U(t-\frac{\pi}{N}, n)}{t}\, dt\right|\\
  &+\left|\int_{\frac{2\pi}{N}}^{\eps} e^{-itN}\psi_Y\left(t-\frac{\pi}{N}\right)\,U\left(t-\frac{\pi}{n}, n\right)
 \left(\frac{1}{t}-\frac{1}{t-\frac{\pi}{N}}\right)\, dt\right|
 = K_3^1+K_3^2+K_3^3.
\end{align*}

Since $\psi_Y$ is Lipschitz and $|f(t, n)|\ll \left(n |t|^{2\alpha-1}\ell\Big(\frac{1}{|t|}\Big)^2
   +|t|^{\alpha-1}\ell\Big(\frac{1}{|t|}\Big)\right)\,|\Psi(t)^n|$,
\begin{align*}
 K_3^1 & \ll \frac{1}{N}\int_{\frac{2\pi}{N}}^{\eps} t^{\alpha-1}\ell(1/t)\, dt
 + \frac{n}{N}\int_{\frac{2\pi}{N}}^{\eps} t^{2\alpha-1}\ell(1/t)^2\, dt\\
 &\ll \frac{\ell(N)}{N^\alpha}+\frac{n \ell(N)}{N^{2\alpha}} =o(n N^{-\alpha}\ell(N)).
\end{align*}

So far all the terms are as good as desired. The loss in the statement
comes from treating the remaining integrals $K_3^2$ and $K_3^3$.
By Proposition~\ref{prop:dec} (i),
\[
 |U(t+h,n)-U(t, n)|\ll \left(|h|^{\alpha}\ell\Big(\frac{1}{|h|}\Big)
 +\, n|h|^{\alpha}\ell\Big(\frac{1}{|h|}\Big)\,|t|^{\alpha}\,\ell\Big(\frac{1}{|t|}\Big)\right)\,|\Psi(t)^n|.
\]
Taking $h=\pi/N$,
\begin{align}\label{eqk32}
\nonumber |K_3^2|&\ll \frac{\ell(N)}{N^\alpha}\int_{\frac{2\pi}{N}}^{\eps}\frac{1}{t}\, dt
+\frac{n\ell(N)}{N^\alpha}\int_{\frac{2\pi}{N}}^{\eps} t^{\alpha-1}\ell(1/t)|\Psi(t)^n|\, dt\\
&\ll \frac{\ell(N)\log N}{N^\alpha}+\frac{\ell(N)}{N^\alpha}\, n\,\int_{\frac{2\pi}{N}}^{\eps} t^{\alpha-1}\ell(1/t)|\Psi(t)^n|\, dt.
\end{align}
The second term is unproblematic. By Lemma~\ref{lemma:20MT}, with $\beta=\alpha-1$ and $L=\ell$,
\begin{align*}
 n\int_{\frac{2\pi}{N}}^\eps t^{\alpha-1}\ell(1/t)\,  |\Psi(t)^n|\,dt
 &\ll n\,a_n^{-\alpha}\ell(a_n)\ll 1.
\end{align*}
Hence, $K_3^2 \ll \frac{\ell(N)\log N}{N^\alpha}$.

For $K_3^3$, we use again that $|U(t, n)/t|\ll n |t|^{2\alpha-1}\ell\Big(\frac{1}{|t|}\Big)^2
   +|t|^{\alpha-1}\ell\Big(\frac{1}{|t|}\Big)$
and that $\left|\frac{1}{t}-\frac{1}{t-\frac{\pi}{N}}\right|\ll \frac{1}{N}\frac{1}{t^2}$.
So, by Karamata's Theorem,
\begin{align*}
 |K_3^3|&\ll \frac{1}{N}\int_{\frac{2\pi}{N}}^{\eps} t^{\alpha-2}\ell(1/t)\, dt
 +\frac{n}{N}\int_{\frac{2\pi}{N}}^{\eps} t^{2\alpha-2}\ell(1/t)^2\, dt\\
 &\ll \frac{1}{N^{\alpha}} \ell(N) + \frac{n\ell(N)^2}{N^{2\alpha}}
 \ll N^{-\alpha}\ell(N)+o(n N^{-\alpha}\ell(N)),
\end{align*}
ending the argument for case $\alpha\in(0,1)$.\\

{\bf{The case $\alpha\in(1,2]$}.} Set $M(n,N)=\int_{-\eps}^\eps e^{-itN}
 \psi_Y(t)\frac{U(t,n)}{it}\, dt$. 
 
Since the boundary terms cancel, integration by parts gives
\begin{align}\label{eq:mn}
\nonumber  M(n,N)&=\frac{1}{iN}\int_{-\eps}^\eps e^{-itN}
 \psi_Y'(t)\frac{U(t,n)}{it}\, dt
 +\frac{1}{iN}\int_{-\eps}^\eps e^{-itN}
 \psi_Y(t)\frac{U'(t,n)}{it}\, dt\\
\nonumber  & \quad  -\frac{1}{iN}\int_{-\eps}^\eps e^{-itN}
 \psi_Y(t)\frac{U(t,n)}{t^2}\, dt \\
 &=\frac{1}{iN} M_1(n,N)+\frac{1}{iN}M_2(n,N)
 +\frac{1}{iN}M_3(n,N).
\end{align}

Throughout the rest of the proof we take $\alpha'$ close to $\alpha$ (that is,
$\alpha-\alpha'$ is positive but as small as we want).
By Proposition~\ref{prop:dec} (ii),
$|U(t,n)|\ll n |t|^{2}+ |t|^{\alpha'}$. By Proposition~\ref{prop:dec} (iii),
$|U'(t,n)|\ll  \left(n\,|t|
     + |t|^{\alpha-1}\ell_0\left(\frac{1}{|t|}\right)+\,n^2\,|t|^{\alpha+1}\ell_0\left(\frac{1}{|t|}\right)\right)\,|\Psi(t)^n|$.
Hence, integrating by parts once more,
\begin{align}\label{eq:m1n}
 \nonumber \left|\frac{1}{iN} M_1(n,N)\right| &\ll \frac{1}{N^2}\int_{0}^\eps|\psi_Y''(t)|\frac{\left|U(t,n)\right|}{t}\, dt
 +\frac{1}{N^2}\int_{0}^\eps |\psi_Y'(t)|\frac{\left|U'(t,n)\right|}{t}\, dt\\
 &\quad +\frac{1}{N^2}\int_{0}^\eps |\psi_Y'(t)|\frac{\left|U(t,n)\right|}{t^2}\, dt\ll\frac{n}{N^2}= o\left(\frac{n \ell_0(N) }{N^{\alpha}}\right),
\end{align}
where we used that if $\alpha = 2$, we assumed infinite variance, so $\ell_0(N) \to \infty$. Here we have also used that $\psi_Y$ is $C^2$ (with bounded first and second derivative).

Next we estimate $M_2, M_3$ via the modulus of continuity argument 
similarly to the proof in the case $\alpha\in(0,1)$ above.

Regarding $M_2$,
\begin{align}\label{eq:m2222}
 \nonumber M_2(n,N)&=\int_{-\frac{2\pi}{N}}^{\frac{2\pi}{N}} e^{-itN} \psi_Y(t)\frac{U'(t,n)}{t}\, dt
 +\int_{-\eps}^{-\frac{2\pi}{N}} e^{-itN}\psi_Y(t) \frac{U'(t,n)}{t}\, dt\\
 &+\int_{\frac{2\pi}{N}}^\eps e^{-itN} \psi_Y(t)\frac{U'(t,n)}{t}\, dt
 = M_2^0(n,N)+M_2^-(n,N)+M_2^+(n,N).
\end{align}
Next, 
\begin{align*}
 M_2^0(n,N)&\ll n\int_{0}^{\frac{2\pi}{N}}1\, dt
 +\int_{0}^{\frac{2\pi}{N}}t^{\alpha-2}\ell_0\left(\frac{1}{t}\right)\, dt
  +n^2\int_{0}^{\frac{2\pi}{N}}t^{\alpha}\ell_0\left(\frac{1}{t}\right)\, dt\\
 &\ll\frac{n }{N}+ 
 \frac{\ell_0(N) }{N^{\alpha-1}} +\frac{n^2\ell_0(N)}{N^{\alpha+1}}
 =O\left(\frac{n }{N}\right)+o\left(\frac{n \ell_0(N) }{N^{\alpha-1}}\right).
\end{align*}

Similarly to the argument used above for the case $\alpha\in(0,1)$ (in estimating $K^+, K^-$),
it suffices to estimate $M_2^+(n,N)$.
Set $g(t, n)=\psi_Y(t)\frac{U'(t,n)}{t}$ and compute that

\begin{align}\label{eq:m20n}
 \nonumber 2 M_2^+&(n,N) = -\int_{\eps}^{\eps+\frac{\pi}{N}}e^{-itN} g\left(t-\frac{\pi}{N}, n\right)\, dt
  +\int_{\frac{2\pi}{N}}^{\frac{2\pi+\pi}{N} } e^{-itN} g\left(t-\frac{\pi}{N}, n\right)\, dt\\
 &+\int_{\frac{2\pi}{N}}^{\eps} e^{-itN}
 \left(g(t, n)-g\left(t-\frac{\pi}{N}, n\right)\right)\, dt
 =M_2^1+M_2^2+M_2^3.
\end{align}
The estimates for $M_2^1$ and $M_2^2$ go similarly to the estimate for $M_2^0(n,N)$ above
and give $|M_2^1|=O(\frac1N) = o\left(\frac{n \ell_0(N) }{N^{\alpha-1}}\right)$
and $|M_2^2|=o\left(\frac{n \ell_0(N) }{N^{\alpha-1}}\right)$.

Regarding $M_2^3$, recalling that $\psi_Y$ is bounded,  we compute that
\begin{align*}
 |M_2^3|& \ll \int_{\frac{2\pi}{N}}^{\eps}\left|\psi_Y(t)-\psi_Y\left(t-\frac{\pi}{N}\right)\right|
 \frac{|U'(t,n)|}{t}\, dt
  +\int_{\frac{2\pi}{N}}^{\eps}\psi_Y\left(t-\frac{\pi}{N}\right)\frac{\left| U'(t,n)-
  U'\left(t-\frac{\pi}{N}, n\right)\right|}{t}\, dt\\
 &\quad +
 \int_{\frac{2\pi}{N}}^{\eps} \psi_Y\left(t-\frac{\pi}{N}\right)\left|U'\left(t-\frac{\pi}{N}, n\right)\right|\,\left|\frac{1}{t}-\frac{1}{t-\frac{\pi}{N}}\right|\, dt= L_1+L_2+L_3.
\end{align*}
Since $\psi_Y$ is Lipschitz, recalling that $|U'(t,n)/t|\ll  \left(n
     + |t|^{\alpha-2}\ell_0\left(\frac{1}{|t|}\right)+\,n^2\,|t|^{\alpha}\ell_0\left(\frac{1}{|t|}\right)\right)\,|\Psi(t)^n|$ (and also using Lemma~\ref{lemma:20MT}, with $\beta\in\{\alpha-2,\alpha\}$, and $L=\ell$), we get
\begin{align}\label{eql1}
 L_1\ll\frac{n}{N}\int_{\frac{2\pi}{N}}^{\eps} |\Psi(t)^n|\, dt \ll\frac{n}{a_n}\frac{1}{N},
\end{align}
where we have used that $a_n^\alpha\sim n\ell_0(a_n)$.

By Proposition~\ref{prop:dec} (iii), 
 \begin{align*}
 \Big|U'(t+h,n)- & U'(t, n)\Big|\ll
 |h|^{\alpha-1}\ell_0\left(\frac{1}{|h|}\right)\,|\Psi(t)^n|\\
 &+n\left(|t|\,|h|^{\alpha'-1}\ell_0\left(\frac{1}{|h|}\right)
 +|h|
 \right)|\Psi(t)^n|\\
 &+n^2 \left(|t|^{2\alpha-1}|h|\ell_0\left(\frac{1}{|t|}\right)^2+|t|^2|h|^{\alpha-1}\ell_0\left(\frac{1}{|h|}\right)+ |h|\, |t|^\alpha\ell_0\left(\frac{1}{|t|}\right)\right)\,|\Psi(t)^n|\\
 &+n^3 |t|^{2\alpha}|h|\ell_0\left(\frac{1}{|t|}\right)^2\,|\Psi(t)^n|.
    \end{align*}
Using this estimate with $h = \pi/N$ and
recalling that $|\Psi(t)|\le e^{-c t^\alpha\ell_0(1/t)}$,
 \begin{align*}
  L_2\ll\ &\frac{\ell_0(N) }{N^{\alpha-1}}\int_{\frac{2\pi}{N}}^{\eps}\frac{1}{t} \,dt+\frac{n \ell_0(N) }{N^{\alpha'-1}}\int_{\frac{2\pi}{N}}^{\eps}|\Psi(t)^n|\, dt+\frac{n}{N}\int_{\frac{2\pi}{N}}^{\eps}\frac{1}{t}\, dt\\
  &+\frac{n^2 \ell_0(N) }{N^{\alpha-1}}\int_{\frac{2\pi}{N}}^{\eps} t^{2\alpha-2}\ell_0\left(\frac{1}{t}\right)\,|\Psi(t)^n|\,dt
  +\frac{n^2 \ell_0(N) }{N^{\alpha-1}}\int_{\frac{2\pi}{N}}^{\eps} t\,|\Psi(t)^n|\,dt\\
  &+\frac{n^2 }{N}\int_{\frac{2\pi}{N}}^{\eps} t^{\alpha-1}\ell_0\left(\frac{1}{t}\right)\,|\Psi(t)^n|\,dt\\
  &+\frac{n^3}{N} \int_{\frac{2\pi}{N}}^{\eps}
 t^{2\alpha-1}\, \ell_0\left(\frac{1}{t}\right)^2\,|\Psi(t)^n|\, dt.
  \end{align*}
  By Lemma~\ref{lemma:20MT} with $\beta=\alpha-1$, and respectively $\beta=2\alpha-2$,  and $L=\ell_0$
  \begin{equation}
  \label{eq:mtuse}
    \int_{\frac{2\pi}{N}}^{\eps} t^{\alpha-1}\ell_0\left(\frac{1}{t}\right)\,|\Psi(t)^n|\,dt\ll
   \frac{\ell_0(a_n)}{a_n^\alpha}\ll\frac{1}{n},
   \end{equation}

  \begin{align*}
   \int_{\frac{2\pi}{N}}^{\eps} t^{2\alpha-2}\ell_0\left(\frac{1}{t}\right)\,|\Psi(t)^n|\,dt
   \ll \frac{\ell_0(a_n)}{a_n^{2\alpha-1}}\ll \frac{1}{n^{2-1/\alpha-\delta^*}},\,\text{ for any } \delta^*>0,
    \end{align*}
and using that $a_n^\alpha\sim n\ell_0(a_n)$,
  \begin{align}\label{eq:mt555555}
      \int_{\frac{2\pi}{N}}^{\eps}
 t^{2\alpha-1}\, \ell_0\left(\frac{1}{t}\right)^2\,|\Psi(t)^n|\, dt\ll
 \frac{\ell_0(a_n)^2}{a_n^{2\alpha}}\ll \frac{1}{n^{2}}.
  \end{align}
  Also, by Lemma~\ref{lemma:20MT} with $\beta=1$ and $L=1$,
  \begin{align}\label{eq:m111}
   \int_{\frac{2\pi}{N}}^{\eps} t\,|\Psi(t)^n|\, dt\ll \frac{1}{a_n^{2}}\ll \frac{1}{(n\ell_0(a_n))^{2/\alpha}}
   \ll \frac{1}{n^{2/\alpha-\delta^*}},\,\text{ for any } \delta^*>0.
  \end{align}

Using the same computation as in~\eqref{eql1},
$\int_{\frac{2\pi}{N}}^{\eps}|\Psi(t)^n|\, dt\ll a_n^{-1}$.
These estimates  together with~\eqref{eq:mtuse},~\eqref{eq:mt555555} and~\eqref{eq:m111} (after a change of variables $t \to \sigma/a_n$) imply that
  \begin{align*}
 L_2 &\ll \frac{n \ell_0(N) }{N^{\alpha'-1}}\frac{1}{a_n}+\frac{n\log N}{N}
  +\frac{\log N\, \ell_0(N)}{N^{\alpha-1}}\\
  &+\frac{n^2 \ell_0(N)}{N^{\alpha-1}}\frac{1}{n^{2-1/\alpha-\delta^*}}
  +\frac{n^2 }{N}\frac{1}{n}+ \frac{n^2 \ell_0(N) }{N^{\alpha-1}}\frac{1}{n^{2/\alpha-\delta^*}}+
  \frac{n^3 }{N}\frac{1}{n^{2}}\\
  &=O\left(\frac{\log N\,\ell_0(N)}{N^{\alpha-1}}\right)+O\left(\frac{n \log N}{N}\right)
  +o\left(\frac{n \ell_0(N)}{ N^{\alpha'-1}}\right).
 \end{align*}
Here we have used that $\frac{n^2}{n^{2-1/\alpha-\delta^*}}<n$, $\frac{n^2}{n^{2/\alpha-\delta^*}}<n$  and that $a_n^\alpha\sim n\ell_0(a_n)$.
 The argument for estimating $L_3$ goes similarly to the argument
 used for $K_3^2$ in~\eqref{eqk32} (inside the proof for the case $\alpha\in (0,1)$ above).
  Recall that $|U'(t,n)|\ll  \left(|t|^{\alpha-1}\ell_0\left(\frac{1}{|t|}\right)+ n |t|
  +n^2|t|^{\alpha+1}\ell_0\left(\frac{1}{|t|}\right)\right)\,|\Psi(t)^n|$
  and $\left|\frac{1}{t}-\frac{1}{t-\frac{\pi}{N}}\right|\ll \frac{1}{N}\frac{1}{t^2}$.
 Hence, using Karamata's Theorem,
 \begin{align*}
  L_3&\ll
  \frac{1}{N}\int_{\frac{2\pi}{N}}^{\eps}t^{\alpha-3}\ell_0\left(\frac{1}{t}\right)\, dt+\frac{n}{N}\int_{\frac{2\pi}{N}}^{\eps} \frac{1}{t}\, dt+\frac{n^2}{N}\int_{\frac{2\pi}{N}}^{\eps}t^{\alpha-1}\ell_0\left(\frac{1}{t}\right)\,|\Psi(t)^n|\, dt\\
  & \ll \frac{\ell_0(N)}{N^{\alpha-1}} 
  +\frac{n\,\log N}{N}
 + \frac{n}{N}.
 \end{align*}
 where we have used~\eqref{eq:mtuse}.
Putting all the above together and recalling~\eqref{eq:m20n},
\begin{align}\label{eq:m2n}
 \left|\frac{1}{iN} M_2(n,N)\right|=O\left(\frac{\log N\,\ell_0(N)}{N^{\alpha'}}\right)+O\left(\frac{n \log N}{N^2}\right)
 +o\left(\frac{n \ell_0(N)}{ N^{\alpha}}\right).
\end{align}

It remains to estimate $M_3(n,N)$ in~\eqref{eq:mn}.
We use the modulus of continuity argument again.
Similarly to~\eqref{eq:m2222},
\begin{align*}
 M_3(n,N)&=\int_{-\frac{2\pi}{N}}^{\frac{2\pi}{N}} e^{-itN} \psi_Y(t)\frac{U(t,n)}{t^2}\, dt
 +\int_{-\eps}^{-\frac{2\pi}{N}} e^{-itN}\psi_Y(t) \frac{U(t,n)}{t^2}\, dt\\
 &+\int_{\frac{2\pi}{N}}^\eps e^{-itN} \psi_Y(t)\frac{U(t,n)}{t^2}\, dt
 = M_3^0(n,N)+M_3^-(n,N)+M_3^+(n,N).
\end{align*}
Recall that by Proposition~\ref{prop:dec}(ii),
\(
|U(t, n)|\ll \left(|t|^{\alpha'}+\,n |t|^2\right)\,|\Psi(t)^n|,
    \)
    for any $\alpha'\in (1,\alpha)$.
An easy computation shows that $|M_3^0(n,N)|=O( N^{1-\alpha'})+ O(n N^{-1})=O(N^{1-\alpha'})+o\left(n \ell_0(N) N^{\alpha-1}\right)$,
where we note that if $\alpha = 2$, we assumed infinite variance, so $\ell_0(N) \to \infty$.

Similarly to the argument used for~\eqref{eq:m2222}, it suffices to estimate $M_3^+(n,N)$. Compute that
\begin{align*}
 |M_3^+|& \ll \int_{\frac{2\pi}{N}}^{\eps}\left|\psi_Y(t)-\psi_Y\left(t-\frac{\pi}{N}\right)\right|
 \frac{|U(t,n)|}{t^2}\, dt
  +\int_{\frac{2\pi}{N}}^{\eps}\psi_Y\left(t-\frac{\pi}{N}\right)\frac{\left|U(t,n)-
  U\left(t-\frac{\pi}{N}, n\right)\right|}{t^2}\, dt\\
 &\quad +
 \int_{\frac{2\pi}{N}}^{\eps} \psi_Y\left(t-\frac{\pi}{N}\right)\left|U\left(t-\frac{\pi}{N}, n\right)\right|\,\left|\frac{1}{t^2}-\frac{1}{\left(t-\frac{\pi}{N}\right)^2}\right|\, dt= S_1+S_2+S_3.
\end{align*}
Clearly, $S_1\ll n N^{-1}=o\left(\frac{n \ell_0(N)}{ N^{\alpha-1}}\right)$.

It remains to estimate $S_2$ and $S_3$.
By Proposition~\ref{prop:dec}(ii),  for any $\alpha'\in (1,\alpha)$,
  \[
   | U(t+h,n)-U(t, n)|\ll
  |h|\, |t|^{\alpha'-1}\,|\Psi(t)^n|+n\,|h|\,|t|.
  \]

Taking $h=\pi/N$,
  \begin{align*}
   S_2& \ll \frac{1}{N}\int_{\frac{2\pi}{N}}^\eps t^{\alpha'-3}\, dt
   +\frac{n}{N}\int_{\frac{2\pi}{N}}^{\eps} t^{-1}\, dt
   \ll \frac{1}{N^{\alpha'-1}}+\frac{n\log N}{N}.
  \end{align*}
  
  Finally, recall again that by Proposition~\ref{prop:dec} (ii),
$|U(t,n)|\ll |t|^{\alpha'} +n |t|^{2}$,  for any $\alpha'\in (1,\alpha)$. Hence,
\[
 S_3\ll\frac{1}{N}\int_{\frac{2\pi}{N}}^\eps t^{\alpha'-3}\, dt
 +\frac{n}{N}\int_{\frac{2\pi}{N}}^{\eps}t^{-1}\, dt
 \ll \frac{1}{N^{\alpha'-1}}+\frac{n\log N}{N}.
\]
Thus, 
\begin{align*}|M_3^+|=
 \begin{cases}
   O\left(\frac{1}{N^{\alpha'-1}}\right)+o\left(\frac{n \ell_0(N)}{ N^{\alpha-1}}\right)&\text{ if }\alpha\in (1,2),\\
   O\left(\frac{1}{N^{\alpha'-1}}\right)+O\left(\frac{n \left(\ell_0(N)+\log N\right)}{N}\right)&\text{ if }\alpha=2.
 \end{cases}
\end{align*}The integral $|M_3^-|$ can be estimated similarly and gives similar estimates.
Altogether,
  \begin{align*} \left|\frac{1}{iN}M_3(n, N)\right|=
 \begin{cases}
   O\left(\frac{1}{N^{\alpha'}}\right)+o\left(\frac{n \ell_0(N)}{ N^{\alpha}}\right)&\text{ if }\alpha\in (1,2),\\[2mm]
   O\left(\frac{1}{N^{\alpha'}}\right)+O\left(\frac{n(\ell_0(N)+ \log N)}{ N^2}\right)&\text{ if }\alpha=2.
 \end{cases}
\end{align*}

 The conclusion follows from the previous displayed equation together
with~\eqref{eq:m2n},~\eqref{eq:m1n} (and recalling~\eqref{eq:mn}), and also by recalling that $\log N\ell_0(N)\ll N^{\delta_0}$, for any $\delta_0$ arbitrarily small.~\end{pfof}

\section{Applications}
\label{sec:appl}

\subsection{Gibbs-Markov maps}
\label{sec-GM}

Roughly speaking, Gibbs-Markov maps are infinite branch uniformly expanding maps with bounded distortion and big images. We recall the definitions in more detail.

Let $(\Omega,\mu)$ be a probability space with an at most countable measurable partition $\{\Omega_j\}$, and let $T:\Omega\to \Omega$ be an ergodic measure-preserving transformation.
Define $s(y,y')$ to be the least integer $n\ge0$ such that $T^ny$ and $T^ny'$ lie in distinct partition elements.
Assuming  that $s(y,y')=\infty$ if and only if $y=y'$ one obtains that $d_\theta(y,y')=\theta^{s(z,z')}$ is a metric
for $\theta\in(0,1)$, 

Let $g=\frac{d\mu}{d\mu\circ T}:\Omega\to\R$.
We say that $T$ is a Gibbs-Markov map if
\begin{itemize}

\parskip = -2pt
\item $T (\Omega_j)$ is a union of partition elements and $T|_{\Omega_j}:\Omega_j\to T(\Omega_j)$ is a measurable bijection for each $j\ge1$;
\item $\inf_j\mu(T(\Omega_j))>0$;
\item
There are constants $C>0$, $\theta\in(0,1)$ such that
$|\log g(y)-\log g(y')|\le Cd_\theta(y,y')$ for all $y,y'\in \Omega_j$, $j\ge1$.
\end{itemize}
For background on Gibbs-Markov maps see, for instance,~\cite[Chapter 4]{Aaronson} and~\cite{AD01}.

Given $f:\Omega\to\R$, let
\[
D_jf=\sup_{y,y'\in\Omega_j,\,y\neq y'}|f(y)-f(y')|/d_\theta(y,y'),\qquad |f|_\theta=\sup_{j\ge1}D_j f.
\]
We let the Banach space $\cB_\theta\subset L^\infty$  consist of functions
$f:\Omega\to\R$ such that $|f|_\theta<\infty$ with norm
$\|f\|_\theta=|f|_\infty+|f|_\theta<\infty$.

\begin{prop} \label{prop-GM}
Assume $T$ is a mixing Gibbs-Markov map and
let $v:\Omega\to\R$ with $\int_\Omega v^2\,d\mu=\infty$ and $|v|_\theta<\infty$. Assume moreover that
 there exists $C>0$ so that
\begin{equation}
 \label{eq: extrass}
 \left|v|_{\Omega_j}\right|_\theta\le C\inf_j(|v|_{\Omega_j}|),\text{ for all }\Omega_j.
\end{equation}

Fix $\alpha\in(0,1)\cup(1,2]$ and 
assume that the tails of $v$ satisfy (H1).
Then conditions (H1)--(H3) are satisfied with Banach space $\cB=\cB_\theta$.
\end{prop}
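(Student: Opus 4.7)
\begin{pfof}{Proposition~\ref{prop-GM}}
The spectral-gap condition (H3) is classical for mixing Gibbs-Markov maps acting on $\cB_\theta$ (see e.g.~\cite{AD01} or~\cite{Aaronson}), so the work lies in checking the four items of (H2) for the family $R(t)f=R(e^{itv}f)$.

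The strategy for each item is: (a) estimate the relevant multiplier pointwise, (b) integrate against the tail distribution of $v$ using (H1) and Karamata's theorem, and (c) pass to the $\cB_\theta$-operator norm via a Gibbs-Markov Lasota-Yorke-type bound. The starting pointwise inequalities are
\[
|e^{ihv}-1|\le \min(2,|hv|),\qquad |e^{ihv}-1-ihv|\le \min(2|hv|,|hv|^2/2),
\]
which, combined with (H1) after truncation at $|v|=1/|h|$, yield
\[
\int |e^{ihv}-1|\,d\mu \ll |h|^\alpha\ell(1/|h|) \quad(\alpha\in(0,1)),\qquad
\int |e^{ihv}-1-ihv|\,d\mu \ll |h|^{\alpha'} \quad(\alpha\in(1,2],\ \alpha'\in(1,\alpha)).
\]
These are exactly the rates wanted in (H2)(ii) and in the $t=0$ case of (H2)(iii); the general $t\ne 0$ case reduces to this via the identity
\[
e^{i(t+h)v}-e^{itv}-ihv=e^{itv}(e^{ihv}-1-ihv)+(e^{itv}-1)\,ihv
\]
and the analogous splitting for $|e^{itv}-1|$. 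Items (H2)(i) and (iv), concerning $R'(t)f=iR(v e^{itv}f)$, follow by the same scheme with the extra factor $v$ absorbed into the multiplier; (H1) with $\alpha>1$ guarantees $v\in L^{\alpha'}$ for every $\alpha'<\alpha$, which is enough for $R'(t)$ to act boundedly on $\cB_\theta$.

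The step specific to the Gibbs-Markov setup is the passage from pointwise/$L^1$ estimates on the multiplier $\phi_{t,h}=e^{i(t+h)v}-e^{itv}$ to $\cB_\theta$-operator bounds. Since $|\phi_{t,h}|_\infty\le 2$ only gives a crude bound, one exploits the averaging of $R$ together with the smallness of $\int|\phi_{t,h}|\,d\mu$; the resulting bound on $\|R(\phi_{t,h}\,\cdot)\|_{\cB_\theta\to\cB_\theta}$ requires, in addition, uniform control on $|\phi_{t,h}|_\theta=\sup_j D_j\phi_{t,h}$. This is where hypothesis~\eqref{eq: extrass} enters: the mean value theorem gives $D_j\phi_{t,h}\le |h|\,D_j v$ times a bounded factor, and~\eqref{eq: extrass} then replaces $D_j v$ by a multiple of $\inf_{\Omega_j}|v|$, so the Lipschitz contribution on $\Omega_j$ is bounded by $|h|\,|v(y_j)|$ for any $y_j\in\Omega_j$. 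Summing over $j$ weighted by $\mu(\Omega_j)$, with truncation at $|v|=1/|h|$, reconstructs by Karamata an integral of the same order as the $L^1$ estimate above, so the two contributions match.

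I expect the most delicate item to be (H2)(iv), demanding $\|R'(t+h)-R'(t)\|\ll |h|^{\alpha-1}\ell_0(1/|h|)$: the relevant multiplier $v(e^{ihv}-1)$ has pointwise bound $\min(2|v|,|h|v^2)$, the optimal truncation threshold depends on the slowly varying factor $\ell_0$, and one must check carefully that the Lipschitz variation produced through~\eqref{eq: extrass} cooperates with the $L^1$ contribution to yield exactly $\ell_0(1/|h|)$ rather than a strictly worse slowly varying factor. Once this bookkeeping is done, all four parts of (H2) are verified.
\end{pfof}
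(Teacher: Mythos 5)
Your proposal follows the same underlying route as the paper: for each item of (H2) one obtains an $L^1(\mu)$ estimate on the relevant multiplier via truncation at $|v|=1/|h|$ and Karamata, and then passes to a $\cB_\theta$-operator-norm bound using (H1), (eq: extrass), and Gibbs--Markov distortion/big-image properties. The identity $e^{i(t+h)v}-e^{itv}-ihv=e^{itv}(e^{ihv}-1-ihv)+(e^{itv}-1)\,ihv$ you use is exactly the splitting the paper uses for (H2)(iii), and the $L^1$ rates you compute ($|h|^\alpha\ell(1/|h|)$ for $\alpha<1$, $|h|^{\alpha'}$ for the second-order increment, $|h|^{\alpha-1}\ell_0(1/|h|)$ for the derivative increment) match.

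The principal difference is how much is re-derived versus cited. The paper's proof of Proposition~\ref{prop-GM} is short because it delegates: (H3) and (H2)(i),(ii) are quoted as well known (Aaronson--Denker), (H2)(iv) is quoted from the proof of Proposition~3.10 of~\cite{MTejp}, and the crucial passage from $\|\phi\|_{L^1}$ to $\|R(\phi\,\cdot)\|_{\cB_\theta\to\cB_\theta}$ (the only place where (eq: extrass) is actually used) is quoted as a single lemma
\[
\|R(\phi\, f)\|\le C\|f\|\,\|\phi\|_{L^1(\mu)},\qquad \phi\in\{v(e^{itv}-1),\ e^{itv}(e^{ihv}-1-ihv)\},
\]
from the proof of Proposition~12.1 of~\cite{MelTer17}. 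The only part proved ab initio in the paper is (H2)(iii). You instead try to prove all four items of (H2) directly, reconstructing the cited lemma from scratch.

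Your reconstruction of that lemma is where the argument is sketchiest. The statement ``the mean value theorem gives $D_j\phi_{t,h}\le|h|\,D_jv$ times a bounded factor'' understates what has to be controlled: for the multipliers actually appearing one gets, via the product rule, bounds such as $D_j\phi\le(|t|\min(2,|h|\sup_{\Omega_j}|v|)+|h|)D_jv$ and similar with an extra factor $|v|$ for the derivative case; to turn these into the Lasota--Yorke estimate one must pair $\sup_{\Omega_j}|\phi|$ and $D_j\phi$ with $g(y_j)\approx\mu(\Omega_j)/\mu(T\Omega_j)$, invoke the big-images condition to drop $1/\mu(T\Omega_j)$, and then recognize $\sum_j\mu(\Omega_j)\inf_{\Omega_j}|\phi|\ll\|\phi\|_{L^1}$. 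This is precisely the content of the cited~\cite{MelTer17} lemma and it is not entirely routine; your ``summing over $j$ weighted by $\mu(\Omega_j)$ reconstructs by Karamata'' gestures at it but elides the distortion/big-image step and the verification that $\phi$ itself inherits an (eq: extrass)-type property from $v$ (which is not automatic: $\inf_{\Omega_j}|e^{ihv}-1-ihv|$ can vanish for intermediate values of $|hv|$, and the paper sidesteps this by citing the lemma rather than re-deriving it).

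Two smaller points. First, you flag (H2)(iv) as the most delicate item; in the paper it is the one that is simply cited from~\cite{MTejp}, so it is not revisited here at all. Second, the paper's proof text says ``It remains to verify (H2)(ii)'' but the quantity estimated is $\|R(t+h)-R(t)-ihR'(0)\|$, i.e.\ (H2)(iii); this is a typo in the source, and your reading of which item needs a fresh argument is correct.

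In summary: same method, but the paper leans on prior work for the operator bound and for (H2)(i),(ii),(iv), whereas you attempt a self-contained derivation whose Lasota--Yorke step would need the distortion/big-images bookkeeping of~\cite{MelTer17} spelled out to be airtight.
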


\begin{proof}
 Conditions (H1) is an assumption on $v$ and condition (H3) is well known 
 (see \cite[Chapter 4]{Aaronson} and~\cite{AD01}).
 Conditions (H2)(i), (ii) are again well known and condition (H2)(iv)
 has been verified in~\cite[Proof of Proposition 3.10]{MTejp}. 
 
 It remains to verify (H2)(ii). First note that
 \[
  \|R(t+h)-R(t)-ih R'(0)\|\le \left\|R\left(e^{itv}\left(e^{ihv}-1-ihv\right)\right)\right\|
  +\left\|R\left(ihv(e^{itv}-1)\right)\right\|.
 \]
 Let $\phi\in\{v(e^{itv}-1), e^{itv}\left(e^{ihv}-1-ihv\right)\}$ and recall that $\phi$ satisfies assumption~\eqref{eq: extrass}.
 As shown in~\cite[Proof of Proposition 12.1]{MelTer17}, under~\eqref{eq: extrass},
 there exists $C>0$ so that for all $f\in\cB_\theta$,
 \[
  \|R(\phi\, f)\|\le C\|f\|\,\|\phi\|_{L^1(\mu)}.
 \]
Since we know $v$ satisfies (H1), we have for any $\alpha'\in (1,\alpha)$,
\begin{align*}
 \|v(e^{itv}-1)\|_{L^1(\mu)}\ll |t|^{\alpha'-1}\text{ and }
 \left\|e^{ihv}-1-ihv\right\|_{L^1(\mu)}\ll |h|^{\alpha'}.
\end{align*}
Thus, $\left\|R(e^{itv}\left(e^{ihv}-1-ihv\right)\right\|\ll |h|^{\alpha'}$
and $\left\|R\left(ihv(e^{itv}-1)\right)\right\|\ll |h|\, |t|^{\alpha'-1}$.~\end{proof}

\subsection{Non-Markov, expanding interval  maps (aka AFU maps)}

Let $\Omega=[0,1]$ with measurable partition $\{I\}$ consisting of open intervals.  A map
$T:\Omega\to \Omega$ is called AFU if $T|_I$ is $C^2$ and strictly monotone for each $I$, and
\begin{itemize}

\parskip = -2pt
\item[(A)] (Adler's condition)   $T''/(T')^2$ is bounded on $\bigcup I$.
\item[(F)] (finite images)  The set of images $\{ T I\}$ is finite.
\item[(U)] (uniform expansion)  There exists $\rho>1$ such that
$|T'|\ge\rho$ on $\bigcup I$.
\end{itemize}
Such maps have been introduced in dynamics in~\cite{Zweimuller98} (see also~\cite{ADSZ04}).  Since AFU maps are not necessarily Markov, H\"older spaces are not preserved by the transfer operator of $T$ and it is standard to consider the space of bounded variation functions.
Accordingly, we define the Banach space $\cB = BV\subset L^\infty$
to consist of functions $f:\Omega\to\R$ such that
$\Var(f)<\infty$
with norm $\|f\|=|f|_\infty+\Var(f)$, where
\[
\Var(f) = \inf_{g \sim f} \sup_{0 = y_0 < ... < y_k = 1} \sum_{i=1}^k |g(y_i)-g(y_{i-1})|,
\]
where $g \sim f$ if $f$ and $g$ differ on a null set, denotes the variation of the (equivalence class) of $f$.
Also, we let $\Var_I (f)$ denote the variation of $f$ on $I$.

We suppose that $T:\Omega\to \Omega$ is topologically mixing.
Then there is a unique absolutely continuous $T$-invariant probability measure $\mu$, and $\mu$ is mixing.

\begin{prop} \label{prop-AFU}
Assume $T$ is a topologically mixing AFU map and
let $v:\Omega\to\R$ with $\int_\Omega v^2\,d\mu=\infty$ and $\supI\Var_I (v)<\infty$.
Fix $\alpha\in(0,1)\cup(1,2]$ and 
assume that the tails of $v$ satisfy (H1).  

Then conditions (H1)--(H3) are satisfied with Banach space $\cB=\BV$.
\end{prop}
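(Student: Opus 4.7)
The approach parallels the proof of Proposition~\ref{prop-GM}, replacing the Lipschitz-type space $\cB_\theta$ by $BV$. Condition (H1) is the standing hypothesis on $v$, and condition (H3) follows from the classical Rychlik--Keller spectral gap theorem for transfer operators of topologically mixing AFU maps on $BV$ (see \cite{Zweimuller98, ADSZ04}).

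For (H2)(i), I would exploit that $R$ satisfies a Lasota--Yorke inequality on $BV$, and that multiplication by $e^{itv}$ is a well-behaved operation on $BV$ under the assumption $\sup_I \Var_I(v) < \infty$: indeed $|e^{itv}|_\infty = 1$ and $\sup_I \Var_I(e^{itv}) \le |t| \sup_I \Var_I(v)$, so the multiplier $e^{itv}$ has uniformly controlled variation on each branch. Combining these gives $\|R(t)\|_{BV} \le C$ for small $|t|$. For $\alpha \in (1,2]$, one similarly bounds $\|R'(t)\|_{BV} = \|R(iv e^{itv}\cdot)\|_{BV}$ via the $L^1$-type estimate recalled below, applied to $\phi = iv e^{itv}$, using that $v \in L^1(\mu)$ by (H1).

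The continuity conditions (H2)(ii)--(iv) are then reduced, exactly as in the proof of Proposition~\ref{prop-GM}, to $L^1(\mu)$ estimates on functions of the form $\phi \in \{(e^{ihv}-1)e^{itv},\, (e^{ihv}-1-ihv)e^{itv},\, v(e^{itv}-1)\}$. These follow directly from the tail condition (H1): for instance, when $\alpha \in (0,1)$ one splits $|e^{ihv}-1| \le \min(2, |hv|)$ and uses regular variation to obtain $\|e^{ihv}-1\|_{L^1(\mu)} \ll |h|^\alpha \ell(1/|h|)$; analogous bounds for the other two functions yield the required estimates for $\alpha \in (1,2]$ and any $\alpha' \in (1,\alpha)$. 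To translate these into operator norm estimates, I invoke a $BV$-analog of~\cite[Proposition 12.1]{MelTer17}, namely $\|R(\phi f)\|_{BV} \ll \|f\|_{BV}\, \|\phi\|_{L^1(\mu)}$ for $\phi$ with bounded local variation.

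The main obstacle is establishing the latter $BV$-analog in the AFU setting. Unlike the Gibbs-Markov case, no condition analogous to~\eqref{eq: extrass} is imposed, so the estimate must come purely from the AFU structure itself. I would exploit the finite-images condition (F), Adler's distortion bound (A), and uniform expansion (U) to control branch-by-branch the variation of $R(\phi f)$ as an integral of $\phi$ against smooth quantities coming from the bounded Jacobian distortion; summing over the (possibly countable) partition and using finiteness of the images then extracts the factor $\|\phi\|_{L^1(\mu)}$. Once this lemma is in hand, the verification of (H2)(ii)--(iv) becomes mechanical, following the Gibbs-Markov template line by line.
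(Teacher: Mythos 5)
Your high-level outline (reduce to $L^1$-type estimates, then verify (H2) via a branch-by-branch variation argument) is the right one, but two of your specific claims do not match what actually makes the argument work. First, you are mistaken that ``no condition analogous to~\eqref{eq: extrass} is imposed, so the estimate must come purely from the AFU structure itself.'' The hypothesis $\sup_I\Var_I(v)<\infty$, which appears in the statement of the proposition, is precisely the BV replacement for the Gibbs--Markov condition~\eqref{eq: extrass}: it is what gives $\Var_I(e^{ihv})\ll|h|\Var_I(v)\ll|h|$ and $\Var_I(e^{itv})\ll|t|$, and these local variation bounds are indispensable. The AFU conditions (A), (F), (U) by themselves only give you a Lasota--Yorke inequality for $R$; they do not control $\Var_I(\phi)$ for the multipliers $\phi$ you need.

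Second, the clean BV-analogue $\|R(\phi f)\|_{BV}\ll\|f\|_{BV}\,\|\phi\|_{L^1(\mu)}$ that you hope to prove for all $\phi$ with uniformly bounded branch variation is too strong to be true: in the Lasota--Yorke bookkeeping the variation of $R(\phi f)$ carries a term of the schematic form $\sum_I\mu(I)\,\Var_I(\phi)\,\sup_I|f|$, and $\Var_I(\phi)$ is not controlled by $\|\phi\|_{L^1}$. The paper therefore does not establish such a lemma; it instead performs the branch-by-branch estimate separately for the two specific choices $\phi=v(e^{itv}-1)$ and $\phi=e^{itv}(e^{ihv}-1-ihv)$, decomposing $\Var(R(\phi f))$ into pieces $S_1,S_2,S_3$ where $S_1$ and $S_3$ (supremum terms) are bounded by $\|\phi\|_{L^1(\mu)}\,\|f\|_{BV}$ while $S_2$ (the variation-of-$\phi$ term) is bounded separately using $\Var_I(e^{ihv})\ll|h|\Var_I(v)$. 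Finally, you should not need to re-derive the whole of (H2): items (H2)(i), (ii) and (iv) for AFU maps on BV are already established in~\cite[Proof of Proposition 3.11]{MTejp}, so the only genuinely new verification is (H2)(iii), and this is where all the effort in the paper's proof is concentrated.
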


\begin{proof}
Condition (H1) is an assumption and condition~(H3) is well-known for mixing AFU maps (see~\cite{Zweimuller98}).
Apart from (H2)(iii), the remaining items of (H2) have been clarified in~\cite[Proof of Proposition 3.11]{MTejp}.

The verification of (H2)(iii) is similar to the one in Proposition~\ref{prop-GM} with the only change that assumption~\eqref{eq: extrass} 
is replaced by 
$\supI\Var_I (v)<\infty$. The assumption
$\supI\Var_I (v)<\infty$ ensures that $\Var_I (e^{itv})\ll |t|\Var_I (v)\ll |t|$ and similarly, $\Var_I (e^{ihv})\ll |h|$.
Using this and proceeding as in~\cite[Proof of Proposition 3.11]{MTejp}
we obtain that
for $\phi=v(e^{itv}-1)$,
we have that
\(
  \|R(\phi\, f)\|\le C\|f\|\,\|\phi\|_{L^1(\mu)}.
 \)
 
 A similar calculation which we sketch below gives the same estimate
 for $\phi= e^{itv}\left(e^{ihv}-1-ihv\right)$.
 Compute that $\|R(e^{itv}\left(e^{ihv}-1-ihv\right)\,f)\|= S_1+S_2+S_3$,
 where
 \begin{align*}
S_1 & =\sum_I \mu(I)\sup_I|e^{ihv}-1-ihv|\sup_I |f| \ll \|e^{ihv}-1-ihv\|_{L^1(\mu)},\\
S_2 & =\sum_I \mu(I)\sup_I|e^{itv}|\left(\Var_I (e^{ihv}) +|h| \Var_I(v)\right)\sup_I |f|\ll |h|,\\
S_3&=\sum_I \mu(I)\sup_I|e^{itv}\left(e^{ihv}-1-ihv\right)|\Var_I (f) \ll \|e^{ihv}-1-ihv\|_{L^1(\mu)}.
\end{align*}

From here onward the proof goes exactly the same as in Proposition~\ref{prop-GM}.~\end{proof}\\

\textbf{Acknowledgement.} We are grateful to the referees for their detailed reading and for pointing out a serious number of very good suggestions and
necessary corrections.

\end{document}